\newtheorem{theorem}{Theorem}[section]
\newtheorem{lemma}[theorem]{Lemma}
\def\LT@start{%
\let\LT@start\endgraf
\endgraf\penalty\z@\vskip\LTpre
\dimen@\pagetotal
\advance\dimen@ \ht\ifvoid\LT@firsthead\LT@head\else\LT@firsthead\fi
\advance\dimen@ \dp\ifvoid\LT@firsthead\LT@head\else\LT@firsthead\fi
\advance\dimen@ \ht\LT@foot
\dimen@ii\vfuzz
\vfuzz\maxdimen
\setbox\tw@\copy\z@
\setbox\tw@\vsplit\tw@ to \ht\@arstrutbox
\setbox\tw@\vbox{\unvbox\tw@}%
\vfuzz\dimen@ii
\advance\dimen@ \ht
\ifdim\ht\@arstrutbox>\ht\tw@\@arstrutbox\else\tw@\fi
\advance\dimen@\dp
\ifdim\dp\@arstrutbox>\dp\tw@\@arstrutbox\else\tw@\fi
\advance\dimen@ -\pagegoal
\ifdim \dimen@>\z@\unskip\vfil\break\fi
\global\@colroom\@colht
\ifvoid\LT@foot\else
\advance\vsize-\ht\LT@foot
\global\advance\@colroom-\ht\LT@foot
\dimen@\pagegoal\advance\dimen@-\ht\LT@foot\pagegoal\dimen@
\maxdepth\z@
\fi
\ifvoid\LT@firsthead\copy\LT@head\else\box\LT@firsthead\fi
\output{\LT@output}}
\definecolor{dblue}{rgb}{0,0,0.70}
\newtheorem*{theorem*}{Theorem}
\newaliascnt{lemma}{theorem}
\newtheorem*{lemma*}{Lemma}
\newaliascnt{proposition}{theorem}
\newtheorem{proposition}[proposition]{Proposition}
\newaliascnt{corollary}{theorem}
\newtheorem{corollary}[corollary]{Corollary}
\newaliascnt{claim}{theorem}
\theoremstyle{remark}
\newaliascnt{remark}{theorem}
\newaliascnt{question}{theorem}
\newtheorem*{question*}{Question}
\newaliascnt{definition}{theorem}
\newtheorem{definition}[definition]{Definition}
\newaliascnt{example}{theorem}
\newtheorem{example}[example]{Example}
\newaliascnt{convention}{theorem}
\newaliascnt{conjecture}{theorem}
\newcommand{\bbN}{\mathbb{N}}
\newcommand{\fraki}{\mathfrak{i}}
\newcommand{\frakt}{\mathfrak{t}}
\newcommand{\PRS}{\mathsf{PRS}}
\newcommand{\Beta}{\mathsf{Beta}}
\newcommand{\ATR}{\mathsf{ATR}}
\newcommand{\KP}{\mathsf{KP}}
\newcommand{\KPi}{\mathsf{KPi}}
\newcommand{\set}{\mathsf{set}}
\newcommand{\dom}{\operatorname{dom}}
\newcommand{\restricts}{\mathop{\upharpoonright}}
\newcommand{\field}{\operatorname{field}}
\newcommand{\Th}{\operatorname{Th}}
\newcommand{\trans}{\mathsf{trans}}
\newcommand{\betarank}{\beta\text{-}\operatorname{rank}}
\newcommand{\height}{\mathsf{height}}
\newcommand{\Ord}{\mathsf{Ord}}
\newcommand{\ZFC}{\mathsf{ZFC}}
\begin{document}

\title{Ranking theories via encoded $\beta$-models}

\author{Hanul Jeon}
\address{Department of Mathematics, Cornell University}
\email{hj344@cornell.edu}

\author{Patrick Lutz}
\address{Department of Mathematics, University of Michigan, Ann Arbor}
\email{pglutz@umich.edu}

\author{Fedor Pakhomov}
\address{Department of Analysis, Logic and Discrete Mathematics, Ghent University}
\email{fedor.pakhomov@ugent.be}

\author{James Walsh}
\address{Department of Philosophy, New York University}
\email{jmw534@nyu.edu}

\subjclass[2020]{Primary 03F35, 03E10, 03E30}
\date{}
\dedicatory{}
\thanks{Thanks to Justin Moore and Ted Slaman for helpful suggestions.}

\commby{}

\begin{abstract}
    Ranking theories according to their strength is a recurring motif in mathematical logic. We introduce a new ranking of arbitrary (not necessarily recursively axiomatized) theories in terms of the encoding power of their $\beta$-models: $T<_\beta U$ if every $\beta$-model of $U$ contains a countable coded $\beta$-model of $T$. The restriction of $<_\beta$ to theories with $\beta$-models is well-founded. We establish fundamental properties of the attendant ranking. The supremum of the $<_\beta$-ranks of theories is $\omega_1$. The supremum of the $<_\beta$-ranks of finitely axiomatized theories is $\delta^1_2$. We also calculate the ranks of some theories of interest.
\end{abstract}

\maketitle

\section{Introduction}

Ranking theories according to their strength is a recurring motif in mathematical logic.
Perhaps the most common way to compare the strength of two theories is via relative consistency strength, but this arranges theories into an ill-founded non-linear order. However, many have observed that the restriction of consistency strength to sufficiently ``natural'' theories forms a well-ordered hierarchy. This contrast is striking, but we can begin to close the gap between the two phenomena by observing a feature of \emph{proofs} of relative consistency. Oftentimes, when logicians prove that $T$'s consistency strength exceeds $U$'s, they actually prove the stronger assertion that $T$ proves the existence of a well-founded model of $U$. Thus, it is worth studying notions of relative strength defined not in terms of proofs of consistency but in terms of well-founded models. The attendant hierarchy of all theories resembles the consistency strength ordering on ``natural'' theories in certain conspicuous respects.


The specific ranking of theories we introduce in this paper is defined in terms of \emph{$\beta$-models}, which are ubiquitous in the study of subsystems of second-order arithmetic.


\begin{definition}
An \emph{$\mathcal{L}_2$ $\beta$-model} is an $\omega$-model $\mathfrak{M}$ in the language of second order arithmetic that is $\Sigma^1_1$-absolute, i.e., for any $\Sigma^1_1$ sentence $\varphi$ with parameters from $\mathfrak{M}$, $\mathfrak{M}\vDash\varphi$ if and only if $\mathcal{P}(\mathbb{N})\vDash\varphi$.
\end{definition}





No consistent r.e.\ theory can prove that it has a $\beta$-model, by G\"odel's second incompleteness theorem. In fact, there is no theory $T$ with $\beta$-models such that every $\beta$-model of $T$ encodes a $\beta$-model of $T$; this is a semantic analogue of G\"odel's theorem. This engenders a semantic analogue of the notion of relative consistency strength for theories with $\beta$-models, namely, $T$ is stronger than $U$ if every $\beta$-model of $T$ encodes a $\beta$-model of $U$. For countable $\beta$-models $M,N$, we write $N\dot\in M$ to mean that $N$ is encoded in $M$.

\begin{definition}
    For $S,T\supseteq \mathsf{ATR}_0$, $S<_\beta T$ if for every $\beta$-model $M$ of $T$, there is a $\beta$-model $N\dot\in M$ such that $N\vDash S$.
\end{definition}

Though the consistency strength ordering is ill-founded, this semantic analogue---remarkably---is well-founded. That is, the restriction of $<_\beta$ to theories with $\beta$-models is well-founded; see \autoref{Lemma: Membership bw beta models is wellfounded} for the proof.\footnote{If a theory $T$ does not have any $\beta$ models, then, trivially, $T<_\beta T$, whence $T$ is not in the well-founded part of the $<_\beta$ ordering.}

In this paper, we establish some fundamental properties of the $<_\beta$ ranking. Note that the $<_\beta$ ranking applies to \emph{arbitrary} (not necessarily r.e.) theories. There are continuum-many theories, hence cardinality considerations do not guarantee that the $<_\beta$ ranks of theories are bounded by $\omega_1$. Nevertheless, we establish the following:
\begin{theorem}
    The supremum of the $\beta$-ranks of extensions of $\mathsf{ATR}_0$ is exactly $\omega_1$.
\end{theorem}

We then turn to characterizing the $<_\beta$-ranks of such theories. Recall that $\delta^1_2$ is the supremum of the order-types of the $\Delta^1_2$-definable well-orderings of $\omega$.
\begin{theorem}
    The supremum of the $<_\beta$-ranks of finitely (alternatively recursively, alternatively $\Sigma^1_2$-singleton) axiomatized theories is $\delta^1_2$. 
\end{theorem}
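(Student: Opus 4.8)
The plan is to establish the two inequalities separately: that $\delta^1_2$ bounds the $\prec_\beta$-ranks of all such theories from above, and that these ranks are cofinal in $\delta^1_2$.

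For the upper bound, I would argue that the $\prec_\beta$-rank of a theory $T$ is computed by a well-founded relation on $\beta$-models, and the point is to show that when $T$ is, say, a $\Sigma^1_2$-singleton, the attendant ordinal is ``reflected'' below $\delta^1_2$. The natural approach is via the Gandy basis theorem and the fact that $\delta^1_2$ is the supremum of the order types of $\Delta^1_2$ well-orderings, equivalently the least ordinal not the order type of a $\Delta^1_2$ well-ordering, and that every $\Sigma^1_2$ set with a member has a member recursive in a real of $\Sigma^1_2$-complexity. First I would observe that the rank function $|\cdot|_\beta$, restricted to the relevant theories, should be definable in a $\Sigma^1_2$ way: the statement ``$N$ is a countable coded $\beta$-model of $T$'' is (boldface or lightface, depending on $T$) appropriately low in the projective hierarchy, since being a $\beta$-model is $\Pi^1_1$ over the model and satisfaction of $T$ is arithmetic in $T$'s axioms. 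Because the ordinal assigned is the rank of a $\Sigma^1_2$-definable well-founded relation, a boundedness argument places it below $\delta^1_2$. The key technical input here is the characterization of $\delta^1_2$ as the closure ordinal / least admissible-like bound for $\Sigma^1_2$ well-founded relations, so that the supremum over all finitely (or recursively, or $\Sigma^1_2$-singleton) axiomatized theories does not exceed $\delta^1_2$.

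For cofinality, I would produce, for arbitrarily large $\alpha < \delta^1_2$, a finitely axiomatized theory of $\prec_\beta$-rank at least $\alpha$. The guiding idea is that $\delta^1_2$ is exactly the supremum of ordinals coded by $\Sigma^1_2$-singleton reals, equivalently the ordinals of the form $\alpha$ such that there is a $\Delta^1_2$ well-ordering of length $\alpha$; correspondingly there is a real, essentially the $<_L$-least code, or the master code at the relevant level of $L$, that is a $\Sigma^1_2$-singleton and pins down initial segments of $L$ of unbounded order type below $\delta^1_2$. I would exploit the finely axiomatized theories asserting ``$V = L$ and there exist exactly $n$ admissible ordinals above the reals'' or, more usefully here, fragment-theories of the form $\mathsf{KP} + V=L + $ ``$\Ord$ is a limit of admissibles'' truncated in a controlled way, so that their minimal $\beta$-models are initial segments $L_{\gamma}$ whose ordinal heights are cofinal in $\delta^1_2$. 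Since $\prec_\beta$-rank tracks the order type of the membership relation on these minimal coded $\beta$-models (by the well-foundedness lemma referenced earlier), the ranks of these theories climb to $\delta^1_2$.

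The main obstacle, I expect, will be the cofinality direction: producing finitely axiomatized theories whose $\prec_\beta$-ranks are genuinely cofinal in $\delta^1_2$ rather than merely cofinal in $\omega_1^L$ or some smaller ordinal. The subtlety is that the rank of a theory is governed by the \emph{least} $\beta$-model it possesses (since $\prec_\beta$ only requires each $\beta$-model to contain a coded $\beta$-model of the weaker theory), so I must engineer theories whose minimal $\beta$-models are exactly the $L_\gamma$ for $\gamma$ approaching $\delta^1_2$, and confirm that the rank does not collapse because of smaller coded submodels sitting inside. This requires a careful analysis tying $\delta^1_2$ to the heights of $\beta$-models of finitely axiomatized set theories—plausibly via the fact that $\delta^1_2$ is the least ordinal $\gamma$ such that $L_\gamma \prec_{\Sigma_1} L_{\delta^1_2}$ fails in a suitable sense, or via Kechris's and others' results on $\delta^1_2$ as a ``next admissible'' ordinal—and matching it against the rank computation. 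Verifying the upper bound's $\Sigma^1_2$-definability claims and the boundedness step is comparatively routine once the reflection characterization of $\delta^1_2$ is in hand.
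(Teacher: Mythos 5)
Your upper bound is workable in outline, but the framing is off: $|T|_\beta$ is the rank of $T$ in the ordering $\prec_\beta$ on \emph{all} theories, which is governed by the coded-membership relation on countable $\beta$-models --- a relation on \emph{reals} whose rank is cofinal in $\omega_1$ --- so no boundedness theorem for $\Sigma^1_2$ well-founded relations can place it below $\delta^1_2$ directly (and the restriction of $\prec_\beta$ to recursively axiomatized theories is a $\Pi^1_2$ relation on $\omega$, and bounding it would anyway only bound the rank within the restricted order, not $|T|_\beta$). What actually does the work in the paper is the pair of facts that (i) $\betarank_{T_0}(T)\le\betarank_{T_0}(M)\le M\cap\Ord$ for \emph{any single} transitive model $M\vDash T$, and (ii) a $\Sigma^1_2$-singleton theory (in particular a finitely or recursively axiomatized one) with a transitive model has one inside $L_{\delta^1_2}$, by Shoenfield absoluteness together with $L_{\delta^1_2}\prec_{\Sigma_1}L$. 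Your appeal to the Gandy/Shoenfield basis theorem is the right instinct and amounts to (ii), but the bound must be routed through one small model, not through definability of the rank function.

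The genuine gap is in the cofinality direction, and you name it yourself without resolving it. Producing finitely axiomatized theories whose \emph{minimal} $\beta$-models are $L_\gamma$ with $\gamma$ cofinal in $\delta^1_2$ does not by itself give large $\prec_\beta$-rank: the well-foundedness lemma yields only rank $\le$ (height of a model), never $\ge$. To get rank $\ge\alpha$ you must exhibit theories of every rank below $\alpha$ sitting $\prec_\beta$-below your theory. The paper does this explicitly: for each $\alpha<\delta^1_2$ it fixes a $\Sigma_1$-over-$L$ definition $\phi_\alpha$ of $\alpha$ \emph{of least height} (the restriction to least height matters, since the construction is sensitive to the choice of definition and not just to $\alpha$), and takes $T_{\phi_\alpha}$ to be $T_0$ plus $(\exists! x\,\phi_\alpha(x))^L$ plus the single sentence asserting, for every $\zeta<\alpha$, the existence of a $(\zeta+1)$-increasing $\in$-chain of transitive models of $T_0$ whose least member satisfies $(\exists x\,\phi_\alpha(x))^L$; the chain axiom, together with a height-monotonicity lemma, is what proves $T_{\phi_\alpha}\prec_\beta T_{\phi_\gamma}$ for $\alpha<\gamma$, so the $T_{\phi_\alpha}$ themselves form a $\prec_\beta$-chain of order type $\delta^1_2$, with $H_{\omega_1}$ witnessing $\beta$-satisfiability. (Alternatively, one can witness the lower ranks with the complete theories $\Th(L_\xi)$ of pointwise-definable levels $\xi$, as the paper does when computing the ranks of $\KPi$ and $\mathsf{ZF}^-$.) Either way, some such family must be constructed and proved to lie $\prec_\beta$-below your theories; the assertion that the rank ``tracks'' the minimal model's height is exactly the claim requiring proof, and it is where your proposal stops.
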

In addition, we calculate the $<_\beta$-ranks of specific theories of interest, including $\mathsf{KPi}$, $\Pi^1_1\text{-}\mathsf{CA}_0$, $\Pi^1_2\text{-}\mathsf{CA}_0$, and $\mathsf{ZF}^-$.\footnote{Some of these are theories in the language of $\in$. We explain how to define ``$\beta$-ranks'' of such theories in \textsection \ref{prelims}.}

\section{Preliminaries}\label{prelims}

In this section, we briefly review preliminary facts.
$\ATR_0^\set$ is a set-theoretic version of $\ATR_0$, which is defined in \cite[\S VII.3]{simpson2009subsystems}.
We suppress the details of $\ATR_0^\set$, but state the precise definition of Axiom Beta since it appears frequently in this paper:
\begin{definition}
    Axiom $\Beta$: A relation $r$ is said to be \emph{regular} if every non-empty set has an $r$-minimal element:
    \begin{equation*}
        \forall u\neq\varnothing \exists x\in u \forall y\in u (\langle y,x\rangle \notin r).
    \end{equation*}
    $\Beta$ states for every regular relation $r$ there is a function $f$ such that $\dom f = \field(r)$ and for every $x\in\field(r)$,
    \begin{equation*}
        f"[x] = \{f"[y]\mid \langle y,x\rangle\in r\}.
    \end{equation*}
    We call such $f$ the \emph{collapsing function of $r$}.
\end{definition}

Note that it is not too hard to prove that every regular relation has at most one collapsing function. We also note that $\ATR_0^\set$ has the axiom of countability, which states that every set is countable.

As the name suggests, $\ATR_0^\set$ is closely connected to the theory $\mathsf{ATR}_0$. In fact, $\ATR_0^\set$ is bi-interpretable with $\ATR_0$. \cite[Theorem VII.3.9]{simpson2009subsystems} states that the interpretation interpreting natural numbers as finite ordinals gives the interpretation $\fraki\colon \ATR_0\to\ATR_0^\set$.
The other direction of the interpretation $\frakt\colon \ATR_0^\set\to \ATR_0$ is known as \emph{sets-as-trees interpretation}, interpreting sets as well-founded trees.
Note that \cite{simpson2009subsystems} uses $|\phi|$ to denote $\phi^\frakt$. Our notation $\phi^\frakt$ is commonly used for interpretations (c.f.\ \cite{enderton2001mathematical}).
\cite[Theorem VII.3.29]{simpson2009subsystems} states that $\frakt$ and $\fraki$ are bi-interpretations between $\ATR_0$ and $\ATR_0^\set$.

The interpretation $\frakt$ and $\fraki$ generalize to extensions of $\ATR_0$. For a given extension $T_0$ of $\ATR_0$, let us define its set-theoretic counterpart:
\begin{definition}
    Let $T_0\supseteq \ATR_0$ be any theory in the language of second-order arithmetic. Define $T_0^\set = \ATR_0^\set \cup \{\phi^\fraki \mid \phi\in T_0\}$.
\end{definition}

\cite[Theorem VII.3.34]{simpson2009subsystems} shows that for $T_0$ extending $\ATR_0$, $\frakt\colon T_0^\set\to T_0$ and $\fraki\colon T_0\to T_0^\set$ jointly form a bi-interpretation.
We will also apply interpretation to models to get a model of another theory. See \cite[Definition VII.3.28]{simpson2009subsystems} for the details, but let us remark that for $M\vDash \ATR_0$, $\frakt(M)$ is the collection of equivalence classes $[t]$ of well-founded trees $t$ coded in $M$.
As a notational comment, Simpson used $A^2$ to denote $\fraki(A)$ for a model $A$ of second-order arithmetic \cite[Definition VI.3.26]{simpson2009subsystems}.

The following ordinal will have a critical role in this paper, and we review its properties accordingly:
\begin{definition}
    $\delta^1_2$ is the supremum of the order-types of all $\Delta^1_2$-definable countable well-orders.
\end{definition}

\begin{proposition}[{\cite[Corollary V.8.3]{Barwise1975}}] \pushQED{\qed}
    $\delta^1_2$ is the least stable ordinal, that is, $\delta^1_2$ is the least $\sigma$ such that $L_\sigma\prec_{\Sigma_1} L$. \qedhere 
\end{proposition}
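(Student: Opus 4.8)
The plan is to prove the equality by squeezing $\delta^1_2$ between the least stable ordinal from both sides, using Shoenfield absoluteness to translate between lightface $\Delta^1_2$ definability on subsets of $\omega$ and lightface $\Sigma_1$-definability over $L$. Write $\sigma$ for the least stable ordinal and let $D$ be the set of ordinals that are $\Sigma_1$-definable over $L$ without parameters. The central structural fact I would establish first is that $\sigma=\sup D$ and moreover $D\subseteq\sigma$. This comes from analyzing the $\Sigma_1$-Skolem hull $X_0$ of $\emptyset$ in $L$: since $L$ has $\Sigma_1$-definable Skolem functions (from its definable global well-order), $X_0\prec_{\Sigma_1}L$, and $X_0$ consists exactly of the elements that are $\Sigma_1$-definable over $L$ without parameters, so $X_0$ is contained in every $\Sigma_1$-elementary substructure. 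By condensation its transitive collapse is $L_\sigma$, which both identifies $\sigma$ as the least stable ordinal and forces $\Ord\cap X_0=D$ to be cofinal in $\sigma$ with $D\subseteq\sigma$.

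For the inequality $\delta^1_2\le\sigma$, I would take a lightface $\Delta^1_2$ well-order $<^*$ of $\omega$ of order type $\alpha$. By Shoenfield absoluteness $<^*$ is absolute to $L$ and is in fact $\Delta_1$-definable over $L$, so $\alpha$ is the unique ordinal satisfying the $\Sigma_1$ formula ``$x$ is the order type of the well-order defined by [the $\Delta_1$ definition of $<^*$]''; that is, $\alpha\in D$. The reason every member of $D$ lies below $\sigma$ is short: if $L\vDash\phi(\alpha)$ with $\phi$ a $\Sigma_1$ formula defining $\alpha$ uniquely, then $L_\sigma\vDash\exists x\,\phi(x)$ by $\Sigma_1$-elementarity, so a witness $\beta<\sigma$ exists, and upward persistence together with uniqueness gives $\beta=\alpha<\sigma$. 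Taking the supremum over all such well-orders yields $\delta^1_2\le\sigma$.

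For the reverse inequality $\sigma\le\delta^1_2$ it suffices, by the first paragraph, to show that every $\alpha\in D$ is the order type of a lightface $\Delta^1_2$ well-order, so that $\sigma=\sup D\le\delta^1_2$. Fix such an $\alpha$. Since $\alpha<\sigma<\omega_1$, the level $L_\alpha$ is genuinely countable, and because $\alpha$ is $\Sigma_1$-definable there is a $\Sigma_1$-definable-over-$L$ surjection $\omega\to\alpha$; pulling back the ordinal ordering gives a relation $R$ on $\omega$ of order type $\alpha$ that is $\Sigma_1$-definable over $L$. By Shoenfield's representation of $\Sigma_1(L)$ predicates on $\omega$ as $\Sigma^1_2$ predicates, $R$ is $\Sigma^1_2$; and since $R$ is a well-order with a canonical definition on the definable bounded object $L_\alpha$, its complement on its field is likewise $\Sigma_1$-definable over $L$, hence $\Pi^1_2$, so $R$ is $\Delta^1_2$. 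Thus $\alpha<\delta^1_2$, completing the squeeze $\delta^1_2=\sigma$.

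The main obstacle is the two-directional Shoenfield bookkeeping in the last paragraph: producing not merely a $\Sigma^1_2$ code but a genuinely $\Delta^1_2$ well-order of order type $\alpha$ requires that the complement of $R$ also be $\Sigma_1$-definable over $L$, which in turn rests on the fact that $R$ lives on the definable, bounded level $L_\alpha$ and that the $L$-hierarchy up to a $\Sigma_1$-definable level carries a $\Sigma_1$-definable satisfaction relation. Establishing the clean pointclass equality ``lightface $\Delta^1_2$ on $\omega$ equals lightface $\Delta_1$ over $L$ on $\omega$,'' together with the structural lemma $\sigma=\sup D$ via the $\Sigma_1$-Skolem hull and condensation, is where essentially all the work lies; once these are in hand the two inequalities above are both short.
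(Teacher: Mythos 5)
The paper does not actually prove this proposition---it is imported directly from Barwise (Corollary V.8.3), with the key structural ingredient recorded separately as \autoref{Lemma: Characterizing stable ordinals}---so the honest comparison is with the standard argument behind that citation, and your blind proof is, in substance, exactly that argument, correctly assembled: your lemma $\sigma=\sup D$ (indeed $D=\sigma\cap\Ord$) via the $\Sigma_1$-Skolem hull and condensation is precisely the content of $L_{\delta^1_2}=\{x\in L\mid x\text{ is $\Sigma_1$-definable over }L\}$ from \autoref{Lemma: Characterizing stable ordinals}, and the two inequalities are the usual two-directional Shoenfield transfer. A few points deserve tightening. First, in the direction $\sigma\le\delta^1_2$, the phrase ``its complement \ldots is likewise $\Sigma_1$-definable over $L$, hence $\Pi^1_2$'' is off by a dual: $\Sigma_1$-over-$L$ definability of the complement makes the \emph{complement} $\Sigma^1_2$, hence $R$ itself $\Pi^1_2$, and only then $\Delta^1_2$; your conclusion is right but the stated inference is not. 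Second, the complement's definability is cleanest if routed through a $\Sigma_1$-definable \emph{element} rather than a satisfaction relation on $L_\alpha$: since ``$\alpha$ is countable'' is $\Sigma_1$ with parameter $\alpha\in L_\sigma$ (note $\alpha<\sigma<\omega_1^L$), a surjection $\omega\to\alpha$ exists in $L_\sigma$, and taking $f_\alpha$ to be the $<_L$-least such surjection makes both $n\mathrel{R}m$ and its negation of the form $\exists f\,(f=f_\alpha\land\Delta_0)$, hence both $\Sigma_1$ over $L$. Third, in $\delta^1_2\le\sigma$ you tacitly use that $L\vDash\mathsf{ZF}$ (collection) to compress ``$x$ is the order type of the $\Delta_1$-defined well-order'' into a single $\Sigma_1$ formula; this deserves a word. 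Finally, the one genuinely nontrivial step---that the transitive collapse of the $\Sigma_1$-hull is of the form $L_\gamma$ and remains $\Sigma_1$-elementary in $L$---is correctly flagged by you as where the work lies, but since the paper already quotes exactly this from Barwise, in context you could simply cite \autoref{Lemma: Characterizing stable ordinals} and delete the hull discussion, leaving only the two short Shoenfield inequalities.
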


\begin{lemma}[{\cite[Theorem V.7.8]{Barwise1975}}] \label{Lemma: Characterizing stable ordinals} \pushQED{\qed}
    Let $\sigma(\alpha)$ be the least $\sigma$ such that $L_\sigma\prec_{\Sigma_1} L_\alpha$. Then
    \begin{equation*}
        L_{\sigma(\alpha)} = \{x\in L \mid x\text{ is $\Sigma_1$-definable over }L_\alpha \}.
\end{equation*}
Similarly, we have
\begin{equation*}
    L_{\delta^1_2} = \{x\in L \mid x\text{ is $\Sigma_1$-definable over }L\}. \qedhere 
\end{equation*} 
\end{lemma}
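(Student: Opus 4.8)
The plan is to realize $\sigma(\alpha)$ as the height of the transitive collapse of the parameter-free $\Sigma_1$-Skolem hull of $L_\alpha$, and then to show that this collapse is trivial. Write $H$ for the set of elements of $L_\alpha$ that are $\Sigma_1$-definable over $L_\alpha$ without parameters; equivalently, $H$ is the closure of $\emptyset$ under G\"odel's $\Sigma_1$-Skolem functions for $L_\alpha$ (the functions selecting the $<_L$-least witness of each $\Sigma_1$ formula). First I would record that $H \prec_{\Sigma_1} L_\alpha$: this is exactly the Tarski--Vaught condition, which the Skolem functions supply. The structure $(H,\in)$ is well-founded, being a subset of $L_\alpha$, and it is extensional --- if $x\neq y$ lie in $H$ then $L_\alpha$ satisfies the $\Sigma_1$ assertion that some $z$ lies in exactly one of $x,y$, and $\Sigma_1$-elementarity returns such a $z$ inside $H$ --- so by the Condensation Lemma its Mostowski collapse is $L_\gamma$ for some $\gamma$, with inverse collapse a $\Sigma_1$-elementary embedding $j\colon L_\gamma \to L_\alpha$ whose range is $H$.

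Next I would dispatch the inclusion $H \subseteq L_{\sigma(\alpha)}$, so that $\gamma \le \sigma(\alpha)\le\alpha$. If $x$ is defined over $L_\alpha$ by a $\Sigma_1$ formula $\varphi$, then $L_\alpha \models \exists v\,\varphi(v)$; since $L_{\sigma(\alpha)} \prec_{\Sigma_1} L_\alpha$ this reflects to a witness $x' \in L_{\sigma(\alpha)}$, which by $\Sigma_1$ upward absoluteness also satisfies $\varphi$ in $L_\alpha$, whence $x = x'$ by uniqueness of the definition. Along the way I would note that $L_\gamma$ is \emph{pointwise} $\Sigma_1$-definable: the collapse carries the definition of each $a\in H$ to a definition of its image, because $j$ preserves $\Sigma_1$ formulas upward and, being elementary on parameter-free sentences, preserves the relevant uniqueness statements.

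The heart of the argument --- and the step I expect to be the main obstacle --- is showing $j = \mathrm{id}$, i.e.\ that $H$ is already transitive. The key observation is this. Fix $a \in L_\gamma$ and a parameter-free $\Sigma_1$ formula $\varphi$ defining $a$ uniquely in $L_\gamma$. The assertion ``there exist two distinct witnesses of $\varphi$'' is a parameter-free $\Sigma_1$ sentence, and it is \emph{false} in $L_\gamma$; since $j$ is $\Sigma_1$-elementary it agrees with $L_\alpha$ on parameter-free sentences, so this sentence is false in $L_\alpha$ as well, i.e.\ $\varphi$ has a unique witness in $L_\alpha$. But $a$ satisfies $\varphi$ in $L_\alpha$ by upward absoluteness, and $j(a)$ satisfies $\varphi$ in $L_\alpha$ by elementarity of $j$, forcing $j(a) = a$. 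As every element of $L_\gamma$ is $\Sigma_1$-definable, $j$ is the identity, $H = L_\gamma$ is transitive, and hence $L_\gamma \prec_{\Sigma_1} L_\alpha$ as a genuine substructure. Minimality of $\sigma(\alpha)$ gives $\gamma \ge \sigma(\alpha)$, and combined with $\gamma \le \sigma(\alpha)$ from the previous paragraph we conclude $H = L_\gamma = L_{\sigma(\alpha)}$, which is the first displayed equality.

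Finally, for the statement about $L$ I would run the identical argument with the proper-class model $L$ in place of $L_\alpha$: the $\Sigma_1$-Skolem functions, the Condensation Lemma, and the parameter-free uniqueness trick all apply verbatim, with the cited fact that $\delta^1_2$ is the least $\sigma$ such that $L_\sigma \prec_{\Sigma_1} L$ playing the role of the definition of $\sigma(\alpha)$. This identifies $L_{\delta^1_2}$ with the set of elements that are $\Sigma_1$-definable over $L$, completing the proof.
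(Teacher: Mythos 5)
The paper states this lemma without proof, citing \cite[Theorem V.7.8]{Barwise1975}, and your argument is correct and is essentially that standard proof: form the parameter-free $\Sigma_1$-Skolem hull of $L_\alpha$, condense it to some $L_\gamma$, and use pointwise $\Sigma_1$-definability together with the transfer of uniqueness statements to see that the collapse is the identity, so the hull is $L_{\sigma(\alpha)}$ itself. The only points worth flagging are the standard background facts you invoke implicitly --- the existence of $\Sigma_1$ Skolem functions and condensation for $\Sigma_1$-elementary substructures of $L_\alpha$ (both routine for limit $\alpha$, the intended case), and, in the class version, that the $\Sigma_1$-definable elements of $L$ form a set (via the $\Sigma_1$ satisfaction predicate for $L$ and Replacement) --- none of which is a gap.
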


\begin{lemma}[{\cite[Lemma II.5.3]{Devlin1984Constructibility}}, $\KP$] \pushQED{\qed} \label{Lemma: Characterizing full stable ordinals}
    For a limit ordinal $\alpha$, let $\hat{\sigma}(\alpha)$ be the least ordinal $\sigma$ such that $L_\sigma \prec L_\alpha$.
    Then
    \begin{equation*}
        L_{\hat{\sigma}(\alpha)} = \{x\in L_\alpha\mid \text{$x$ is definable in $L_\alpha$ without parameters}\}. \qedhere 
    \end{equation*}
\end{lemma}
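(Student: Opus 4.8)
The plan is to identify the set $D := \{x \in L_\alpha \mid x \text{ is definable in } L_\alpha \text{ without parameters}\}$ with $L_{\hat\sigma(\alpha)}$, the heart of the matter being that $D$ is \emph{already} an elementary, transitive substructure, so that no collapse is needed. Throughout I would use the standard fact (provable in $\KP$) that the canonical well-ordering $<_L$ restricted to $L_\alpha$ is uniformly definable over $L_\alpha$ without parameters; this is the single structural input that makes everything run, as it supplies definable Skolem functions.

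The easy inclusion is $D \subseteq L_{\hat\sigma(\alpha)}$, which follows purely from elementarity. If $x \in D$ is the unique solution of $\theta(v)$ in $L_\alpha$, then $L_\alpha \vDash \exists! v\, \theta(v)$; since $L_{\hat\sigma(\alpha)} \prec L_\alpha$ by the definition of $\hat\sigma(\alpha)$, the witness computed inside $L_{\hat\sigma(\alpha)}$ must, by uniqueness together with elementarity, coincide with $x$. Hence $x \in L_{\hat\sigma(\alpha)}$, and in particular $D \cap \Ord \subseteq \hat\sigma(\alpha)$. Next I would establish $D \prec L_\alpha$ by the Tarski--Vaught criterion: given parameters $\bar a \in D$ with $L_\alpha \vDash \exists v\, \phi(v,\bar a)$, let $b$ be the $<_L$-least witness. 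Because each $a_i$ is definable without parameters and $<_L \restricts L_\alpha$ is definable, $b$ is itself definable without parameters, so $b \in D$; thus $D \prec L_\alpha$.

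The crucial step is transitivity of $D$. Fix $x \in D$ and $y \in x$; I want $y \in D$. The idea is to produce a parameter-free definable enumeration of $x$ from the definable well-ordering: there is a $<_L$-least surjection $f$ of an index set onto $x$, and since $x$ is definable and $<_L \restricts L_\alpha$ is definable, $f$ is definable without parameters. Every $y \in x$ is then the value of $f$ at a definable index, so $y \in D$. Once transitivity is secured, $D$ is a transitive substructure satisfying $V=L$ (it inherits $V=L$ from $D \prec L_\alpha$), whence $D = L_\gamma$ for $\gamma = D \cap \Ord$ and $L_\gamma = D \prec L_\alpha$. Minimality of $\hat\sigma(\alpha)$ forces $\hat\sigma(\alpha) \le \gamma$, while the first inclusion gives $\gamma \le \hat\sigma(\alpha)$; therefore $\gamma = \hat\sigma(\alpha)$ and $D = L_{\hat\sigma(\alpha)}$, as required.

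I expect the transitivity step to be the main obstacle, and it is genuinely the delicate point. A definable set can in principle carry an undefinable element (a definable set of ordinals may have undefinable members below a definable bound), so the definable hull is not automatically transitive; the enumeration argument above works only because in the relevant setting every definable set admits a parameter-free definable enumeration by a definable index set, which is exactly where the well-ordering $<_L$ and the size constraints on $L_\alpha$ enter. This is precisely the feature that separates the present \emph{full}-definability statement from the easier $\Sigma_1$ version recorded in the preceding lemma, where closure of $\Sigma_1$-definability under membership makes transitivity automatic. I would therefore devote the bulk of the write-up to verifying that the $<_L$-enumeration of a definable set is itself parameter-free definable and that its indices remain inside $D$.
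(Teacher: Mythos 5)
You have reproduced the skeleton of the standard argument correctly, and since the paper itself gives no proof (the lemma is quoted from Devlin with a qed), the comparison is with that standard argument: the inclusion of the definable hull $D$ into $L_{\hat\sigma(\alpha)}$ via uniqueness plus elementarity, the verification $D\prec L_\alpha$ via $<_L$-least-witness Skolem functions, condensation giving $D=L_\gamma$, and the final squeeze $\gamma=\hat\sigma(\alpha)$ are all right, and you correctly isolate transitivity of $D$ as the crux. The gap is in your transitivity argument itself. Your enumeration step needs more than a parameter-free definable surjection $f$ from a definable index set onto $x$: it needs every \emph{element} of the index set to be definable, and if the index set is an ordinal $\delta$, the claim ``every ordinal below $\delta$ is in $D$'' is precisely an instance of the transitivity you are trying to prove, so the argument is circular. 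You gesture at ``size constraints on $L_\alpha$,'' but no such constraint appears in the statement, and none is available for a general limit $\alpha$.

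In fact, at the stated level of generality the transitivity claim --- and the lemma itself --- is false, so no proof can close this gap without an extra hypothesis. Take $\alpha=\omega_1+\omega$ (work under $V=L$, or inside a model of $\KP$ tall enough to contain $\omega_1$): the ordinal $\omega_1$ is definable in $L_\alpha$ as the largest limit ordinal, so $\omega_1\in D$, yet $D$ is countable while $\omega_1$ is not, so $D$ is not transitive; moreover any $L_\sigma\prec L_\alpha$ must have its largest limit ordinal genuinely uncountable in $L_\alpha$, which forces $\hat\sigma(\alpha)=\alpha$, so the lemma would assert that the uncountable structure $L_{\omega_1+\omega}$ is pointwise definable. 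The statement, and your proof, are rescued exactly when $L_\alpha$ satisfies the axiom of countability --- which holds in every application in this paper, where the relevant $\alpha$ are locally countable: then the $<_L$-least surjection $f\colon\omega\to x$ is parameter-free definable from the definable parameter $x$, the indices are numerals and hence definable, so $x\subseteq D$, and the remainder of your argument goes through verbatim. So your instinct that the ``delicate point'' is where size enters was exactly right; the repair is to add the hypothesis $L_\alpha\vDash{}$``every set is countable'' and instantiate your index set as $\omega$, since without that hypothesis the step you flagged is not merely delicate but false.
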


\begin{lemma} \label{Lemma: Stable ordinal monotonicity}
    $\alpha\mapsto \hat{\sigma}(\alpha)$ is monotone, that is, $\alpha\le\beta \to \hat{\sigma}(\alpha)\le\hat{\sigma}(\beta)$.
\end{lemma}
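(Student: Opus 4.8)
The plan is to prove monotonicity through the characterization of $L_{\hat\sigma(\alpha)}$ as the parameter-free definable hull of $L_\alpha$ supplied by \autoref{Lemma: Characterizing full stable ordinals}. Since that lemma identifies $L_{\hat\sigma(\alpha)}$ with a transitive level, the ordinals it contains are exactly the ordinals below $\hat\sigma(\alpha)$; hence $\hat\sigma(\alpha)$ is simultaneously the order type and the literal set of ordinals definable in $L_\alpha$ without parameters. The inequality $\hat\sigma(\alpha)\le\hat\sigma(\beta)$ is therefore equivalent to an inclusion of these two sets, so it suffices to show that every ordinal parameter-free definable in $L_\alpha$ is parameter-free definable in $L_\beta$.

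First I would dispose of the easy regime. Because $L_\alpha\prec L_\alpha$ trivially, we always have $\hat\sigma(\alpha)\le\alpha$; so if $\alpha\le\hat\sigma(\beta)$ then $\hat\sigma(\alpha)\le\alpha\le\hat\sigma(\beta)$ and there is nothing to prove. Thus the whole content lies in the case $\hat\sigma(\beta)<\alpha\le\beta$, where $\alpha$ itself is not among the parameter-free definable ordinals of $L_\beta$.

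The main tool I would use is internalization of the satisfaction relation. Whenever the ordinal $\alpha$ is itself parameter-free definable in $L_\beta$ -- which covers, for example, every pair of the form $\alpha,\alpha+\omega$, since there $\alpha$ is the largest limit ordinal of $L_{\alpha+\omega}$ -- the level $L_\alpha$ and its satisfaction predicate $\mathrm{Sat}_{L_\alpha}$ become parameter-free definable inside $L_\beta$ (using $L_\alpha\in L_\beta$ and $\alpha<\beta$). Then any $\xi$ defined in $L_\alpha$ by a formula $\varphi$ is defined in $L_\beta$ as the unique $v$ with $\mathrm{Sat}_{L_\alpha}(\varphi,v)$, so the transfer goes through and the two hulls nest as required. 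To reach the general case I would combine this with two standard rigidity facts about elementary submodels: if $M\prec N$, $M'\prec N$ and $M\subseteq M'$ then $M\prec M'$; and if $L_\delta\prec L_\gamma$ with $L_\gamma$ pointwise definable then $L_\delta=L_\gamma$. Using that $L_{\hat\sigma(\alpha)}$ is pointwise definable and $\prec L_\alpha$, these reduce ``$\hat\sigma(\alpha)\le\hat\sigma(\beta)$'' to the single assertion that the hull $L_{\hat\sigma(\alpha)}$ remains elementary in the larger model $L_\beta$.

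I expect this last reduction to be the main obstacle. Propagating $L_{\hat\sigma(\alpha)}\prec L_\alpha$ up to $L_{\hat\sigma(\alpha)}\prec L_\beta$ is not a formal consequence of $L_\alpha\subseteq L_\beta$, because that inclusion need not be elementary: an existential witness present in $L_\alpha$ need not persist upward to $L_\beta$, and a new witness in $L_\beta$ need not reflect down into the hull. The saving structure should come from the pointwise-definability of $L_{\hat\sigma(\alpha)}$ -- every parameter in play is already parameter-free definable, so a Tarski--Vaught analysis can be reduced to parameter-free sentences -- together with the minimality built into $\hat\sigma(\beta)$. Making this step close in full generality, rather than only in the convenient case where $\alpha$ is definable in $L_\beta$, is where I anticipate the real difficulty and the step I would expect to demand the most care.
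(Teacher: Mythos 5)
Your proposal is not yet a proof, and the gap you flag at the end cannot be closed, because the statement you are attacking head-on --- monotonicity of $\hat\sigma$, i.e.\ of \emph{full} elementary substructure --- is false. By \autoref{Lemma: Collection of all L-pointwise definable ordinal is cofinal}, the ordinals $\alpha<\omega_1^L$ with $L_\alpha$ pointwise definable are unbounded in $\omega_1^L$, and for any such $\alpha$ one has $\hat\sigma(\alpha)=\alpha$: if $L_\delta\prec L_\alpha$, then $L_\delta$ contains every parameter-free definable element of $L_\alpha$, hence all of $L_\alpha$. On the other hand, a standard L\"owenheim--Skolem chain argument inside $L$ shows that $\{\delta<\omega_1^L : L_\delta\prec L_{\omega_1^L}\}$ is unbounded in $\omega_1^L$, so $\hat\sigma(\omega_1^L)<\omega_1^L$. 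Choosing a pointwise definable $\alpha$ with $\hat\sigma(\omega_1^L)<\alpha<\omega_1^L$ and $\beta=\omega_1^L$ gives $\alpha\le\beta$ but $\hat\sigma(\alpha)>\hat\sigma(\beta)$. Your own reduction makes the obstruction visible: you would need $L_{\hat\sigma(\alpha)}=L_\alpha\prec L_{\omega_1^L}$, which together with your second rigidity fact forces $\alpha=\hat\sigma(\omega_1^L)$, a contradiction. So no amount of care with the Tarski--Vaught analysis will finish the argument; the satisfaction-internalization trick genuinely covers only the case where $\alpha$ is parameter-free definable in $L_\beta$, and that is not the general case.

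What the paper actually proves (and what is used later, e.g.\ in \autoref{Lemma: height monotonicity}) is monotonicity of $\sigma$, the $\Sigma_1$-stability operator: despite the $\hat\sigma$ in the statement, the proof opens by invoking \autoref{Lemma: Characterizing stable ordinals} and shows that every $a\in L_{\sigma(\alpha)}$ is $\Sigma_1$-definable over $L_\beta$. The key idea you are missing is the least-witness trick, which is exactly what makes the transfer work at the $\Sigma_1$ level and has no analogue for full definability: if $\phi(x)\equiv\exists y\,\psi(x,y)$ with $\psi$ bounded defines $a$ over $L_\alpha$, replace it by $\phi'(x)$ asserting that $\psi(x,y)$ holds for some $y$ with $(x,y)$ the $<_L$-least pair satisfying $\psi$. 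Since $\psi$ is $\Delta_0$, it is absolute between the transitive sets $L_\alpha$ and $L_\beta$, and since $L_\alpha$ is a $<_L$-initial segment, the $<_L$-least witnessing pair computed in $L_\beta$ already lies in $L_\alpha$ and therefore still defines $a$. Full first-order satisfaction is not absolute between $L_\alpha$ and $L_\beta$, which is precisely why your approach stalls where it does; if you restate the lemma with $\sigma$ in place of $\hat\sigma$, the least-witness argument proves it in a few lines.
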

\begin{proof}
    By \autoref{Lemma: Characterizing stable ordinals}, it suffices to show that if $a \in L_{\sigma(\alpha)}$, then $a$ is $\Sigma_1$-definable over $L_\beta$.
    To see this, let $\phi(x)\equiv \exists y\psi(x,y)$ be a $\Sigma_1$-formula defining $a$ over $L_\alpha$, where $\psi$ is a bounded formula.
    We claim that the formula
    \begin{equation*}
        \phi'(x) \equiv \exists y [\psi(x,y)\land \forall (x',y')<_L (x,y) \lnot \psi(x',y')]
    \end{equation*}
    is a $\Sigma_1$-formula defining $a$ over $L_\beta$.
    From $L_\alpha\vDash \phi(a)$, we have $b\in L_\alpha$ satisfying $\psi(a,b)$.
    Hence if $(a',b')$ is the $<_L$-least pair satisfying $\psi(a,b)$, then $(a',b')\le_L (a,b) <_L L_\alpha$. However, it is known that $L_\alpha$ forms an initial segment under $<_L$, so $(a',b')\in L_\alpha$.
    This means $L_\alpha\vDash \psi(a',b')$, so $L_\alpha\vDash \phi(a')$.
    Since $\phi$ defines an element over $L_\alpha$, $a=a'$.
    The previous argument says two things: We have $\phi'(a)$, and an element $x$ satisfying $\phi'(x)$ is unique, as desired.
\end{proof}

\section{\texorpdfstring{$\beta$}{Beta}-models and \texorpdfstring{$\beta$}{beta}-ranks of theories}

In this section, we will begin to prove some basic facts about $\beta$-ranks of theories. We begin by reminding the reader of the definition of a $\beta$-model.

\begin{definition}
    An $\omega$-model $M$ of second-order arithmetic is a \emph{$\beta$-model} if for every $\Sigma^1_1$ sentence $\varphi$ with parameters from $M$, $M\vDash\varphi$ if and only if $\varphi$ is true.

    An $\omega$-model of set theory is a \emph{$\beta$-model} if it is well-founded.
\end{definition}
The correct set-theoretic counterpart of a $\beta$-model of arithmetic is a well-founded (\emph{abbr.} wf) model. Note that this justifies the use of the terminology \emph{$\beta$-model} for referring to well-founded models of set theory.
\begin{lemma}[{\cite[Theorem VII.3.27]{simpson2009subsystems}}] \label{Lemma: Two beta model notions are equal} \pushQED{\qed}
    Let $M\vDash \ATR_0^\set$ be an $\omega$-model of set theory (i.e., $\omega^M=\omega$). Then the following are equivalent:
    \begin{enumerate}
        \item $M$ is well-founded.
        \item $\fraki(M)$ is $\Pi^1_1$-correct.
    \end{enumerate}
    That is, $M$ is a well-founded model if and only if $\fraki(M)$ is a $\beta$-model. \qedhere
\end{lemma}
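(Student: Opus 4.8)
The plan is to route both implications through the single intermediate notion of \emph{computing well-foundedness correctly}, using the two non-trivial axioms of $\ATR_0^\set$ as engines: $\Beta$ for the forward direction and the Axiom of countability for the reverse. First I would record two normalizing observations. Since $\omega^M=\omega$, each $X\in\mathcal P(\bbN)^M$ is identified with the genuine set $\{n\in\omega: M\vDash n\in X\}$, so the second-order part of $\fraki(M)$ consists of genuine subsets of $\omega$ and its arithmetic is the true arithmetic; consequently, for a real $X$ coding a binary relation on $\omega$, the $\Pi^1_1$ sentence ``$X$ is well-founded'' holds in $\fraki(M)$ iff no $f\in\mathcal P(\bbN)^M$ codes an infinite descending $X$-chain, i.e.\ iff $M$ believes the coded relation is regular. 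Second, for $\omega$-models $\Pi^1_1$-correctness and $\Sigma^1_1$-correctness coincide sentence-by-sentence (each biconditional is the contrapositive of the other), so I may use whichever is convenient.

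For $(1)\Rightarrow(2)$, I would assume $M$ well-founded and, via the (external) Mostowski collapse of the extensional, genuinely well-founded structure $M$, take $M$ transitive; then every real of $\fraki(M)$ is a genuine subset of $\omega$ and every path lying in $M$ is a genuine path. The key step is correctness about well-foundedness. If $M\vDash\text{``}X\text{ is regular''}$, then $\Beta$ yields a collapsing function $c_X\in M$, which, $M$ being transitive, is a genuine function onto genuine sets; a genuine infinite descending $X$-chain would push down through $c_X$ to a genuine infinite descending $\in$-chain, contradicting well-foundedness of $M$, so $X$ is genuinely well-founded. The reverse inclusion is immediate, as a descending path in $M$ is genuine. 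Finally I would promote correctness about well-foundedness to full $\Pi^1_1$-correctness via the $\Sigma^1_1$ normal form: $\ATR_0$ proves every $\Sigma^1_1$ formula $\varphi(X)$ equivalent to ``$T_\varphi(X)$ is ill-founded'' for a uniformly arithmetical tree $T_\varphi(X)$, and since $T_\varphi(X)\in\fraki(M)\vDash\ATR_0$, correctness about well-foundedness gives $\fraki(M)\vDash\varphi(X)$ iff $\varphi(X)$ is true. (Alternatively one may invoke \cite[Theorem VII.3.27]{simpson2009subsystems}, which makes $\fraki(M)$ isomorphic to a $\beta$-model; under the identification above an $\omega$-model isomorphic to a genuine $\beta$-model is $\Pi^1_1$-correct.)

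For $(2)\Rightarrow(1)$, I would prove the contrapositive by manufacturing a pseudo-well-order. Suppose $M$ is ill-founded, with an external infinite chain $a_0\ni^M a_1\ni^M\cdots$. By the Axiom of countability the set $\mathrm{TC}(\{a_0\})\in M$ is countable in $M$, so there is a surjection $e\colon\omega\to\mathrm{TC}(\{a_0\})$ with $e\in M$; put $X=\{\langle m,n\rangle : M\vDash e(m)\in e(n)\}\in\mathcal P(\bbN)^M$. By Foundation, $M\vDash\text{``the relation coded by }X\text{ is regular''}$, so $\fraki(M)\vDash\text{``}X\text{ is well-founded''}$. Writing $a_i=e(k_i)$ with $k_i\in\omega$ genuine (as $\omega^M=\omega$ and $e$ has domain $\omega$), the relation $a_{i+1}\in^M a_i$ gives $\langle k_{i+1},k_i\rangle\in X$ genuinely, so $(k_i)_{i\in\omega}$ is a genuine infinite descending chain and $X$ is genuinely ill-founded. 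Thus ``$X$ is well-founded'' is a $\Pi^1_1$ sentence true in $\fraki(M)$ but false, so $\fraki(M)$ is not $\Pi^1_1$-correct.

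The genuinely load-bearing ingredients are exactly the two axioms: $\Beta$ converts $M$'s internal regularity into genuine well-foundedness via an $M$-collapse, and countability converts an internal descending chain into a real code of a pseudo-well-order. The point I expect to be delicate rather than deep is the bookkeeping that $\fraki$ makes the arithmetical predicate ``well-founded'' agree with $M$'s set-theoretic ``regular,'' together with the observation that a witnessing descending chain need \emph{not} lie in $M$ to establish genuine ill-foundedness while it \emph{must} lie in $M$ (equivalently, in $\mathcal P(\bbN)^M$) to be seen by $\fraki(M)$ -- this asymmetry is precisely what drives both directions.
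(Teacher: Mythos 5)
Your proposal is correct and follows essentially the same route as the paper's proof: the forward direction uses $\Beta$ to transfer $M$'s internal well-foundedness judgments to genuine well-foundedness (the paper collapses to an $M$-ordinal and invokes $\Delta_0$-absoluteness, while you push a hypothetical descending chain through the collapsing function---the same use of $\Beta$ in contrapositive form), combined with the normal form reducing $\Pi^1_1$ statements to well-foundedness; the reverse direction is the paper's argument almost verbatim, using the axiom of countability to code the membership relation on $\mathrm{TC}(\{a_0\})$ as a real that $M$ deems well-founded but is externally ill-founded (the paper packages this as a tree of descending sequences rather than the raw relation, a cosmetic difference). Your explicit handling of parameters and of the easy half of well-foundedness correctness is if anything slightly more careful than the paper's write-up.
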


The converse direction of the above statement uses the existence of a bijection between $\omega$ and $a$. The following example shows that this was necessary: We will see there is an ill-founded model of $\mathsf{ZFC}$ that is correct about $\Pi^1_1$-sentences.

\begin{example}
    Assume that $0^\sharp$ exists, and consider the iteration $M$ of $0^\sharp$ along $-\omega$.%
    \footnote{In terms of EM blueprint, $M$ is a model satisfying $0^\sharp$ as a theory whose set of indiscernibles is isomorphic to $-\omega$.}
    Every real in $L$ is parameter-free definable in $L$, and thus also in $M$.
    Now for a given $\Pi^1_1$-statement $\phi(X)$ with a real $X\in M$, $M\vDash \phi(X)$ iff $L\vDash \phi(X)$ iff $\phi(X)$. (The first follows since $M$ and $L$ are elementarily equivalent, and the second follows from Shoenfield absoluteness.) Hence $M$ is $\Pi^1_1$-correct but ill-founded.
\end{example}

We omit a proof for the following lemma, whose details appear in \cite{mummert2004incompleteness, lutz2020incompleteness}. 
\begin{lemma} \label{Lemma: Membership bw beta models is wellfounded} \pushQED{\qed}
    The following relations are well-founded:
    \begin{enumerate}
        \item The restriction of the membership relation to well-founded models of set theory, i.e., $\{ (N,M) \mid N\in M, \text{ $N$ and $M$ are well-founded $\mathcal{L}_\in$ structures} \}.$
        \item The restriction of the coded membership relation to $\beta$-models of second-order arithmetic, i.e., $\{ (N,M) \mid N\dot\in M, \text{ $N$ and $M$ are $\mathcal{L}_2$ $\beta$-models} \}.$ \qedhere 
    \end{enumerate}
\end{lemma}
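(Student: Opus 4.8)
The plan is to prove part (1) directly from the Axiom of Foundation and to reduce part (2) to part (1) using the bi-interpretation between $\ATR_0$ and $\ATR_0^\set$ developed above.

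For part (1), observe that after replacing each well-founded extensional structure by its Mostowski collapse --- which is harmless, since the collapse is an isomorphism --- the relation becomes a sub-relation of the genuine membership relation of $V$. An infinite descending sequence $M_0 \ni M_1 \ni M_2 \ni \cdots$ of such structures would then be an infinite descending $\in$-chain in $V$, contradicting Foundation. The relation is also set-like, since for fixed $M$ the class $\{N \mid N \in M\}$ is a subset of $M$; hence it is well-founded in the strong sense and carries a rank function.

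For part (2), I would attach to each $\mathcal{L}_2$ $\beta$-model $M$ (which in the setting of this paper we may take to extend $\ATR_0$) the set-theoretic model $\frakt(M) \vDash \ATR_0^\set$. Since $M$ is a $\beta$-model, the model-theoretic interpretation theorems recalled above (via \autoref{Lemma: Two beta model notions are equal}) guarantee that $\frakt(M)$ is well-founded, so we may identify it with a transitive set $A_M$. The crux is to check that coded membership on the arithmetic side corresponds to genuine membership of these transitive collapses: if $N \dot\in M$, witnessed by a real $c \in M$ coding the $\omega$-model $N$, then $A_N \in A_M$. Indeed, under $\frakt$ the reals of $M$ correspond to the hereditarily countable elements of $A_M$, and the element matching $c$ is exactly the transitive collapse of the model coded by $c$ as computed inside $M$ (that such a collapse exists internally is \autoref{Lemma: t(M) is constructible from ATR 0 set} applied within $A_M$); since $M$ correctly decides well-foundedness, this collapse is the genuine one (by $\Beta$ and the uniqueness of the collapsing function), so the resulting element of $A_M$ is literally $A_N$. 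Granting this, an infinite $\dot\in$-descending sequence $M_0 \mathrel{\dot\ni} M_1 \mathrel{\dot\ni} \cdots$ would yield an infinite descending chain $A_{M_0} \ni A_{M_1} \ni \cdots$ of transitive sets, contradicting part (1); and the relation is again set-like because the $N$ with $N \dot\in M$ are the decodings of the reals of $M$.

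The main obstacle is exactly the identification of coded membership with membership of transitive collapses. Two things require care. First, one must verify that the object $M$ internally regards as the collapse of the model coded by $c$ agrees externally with $A_N$; this is where $\Sigma^1_1$-correctness of $M$ is indispensable, as it prevents $M$ from being deceived about the well-foundedness of the relation being collapsed and hence forces it to compute the true collapse. Second, one should trace the definitions of $=^*$ and $\in^*$ on suitable trees, together with the identity $|X^*| = X$, to confirm that $\frakt$ sends coded membership to the genuine membership relation of $A_M$ rather than to a coarser relation. Both verifications are routine given the development in \cite{simpson2009subsystems}, which is presumably why the authors omit them.
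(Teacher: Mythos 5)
Your proposal is correct in substance but takes a genuinely different route from the paper for part (2). The paper, like you, treats part (1) as immediate from Foundation; for part (2) it does not reduce to part (1) but instead uses an ordinal invariant: if $M$ and $N$ are $\beta$-models with $N \mathrel{\dot\in} M$, then $\omega_1^N < \omega_1^M$, where $\omega_1^X$ denotes the supremum of the order types of well-orderings coded in $X$, so an infinite $\dot\in$-descending chain would yield an infinite descending sequence of ordinals (the inequality is outsourced to \cite{mummert2004incompleteness, lutz2020incompleteness}). You instead push everything through the sets-as-trees interpretation: send each $\beta$-model $M$ to the transitive collapse $A_M$ of $\frakt(M)$ and show that coded membership goes to genuine membership. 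Your crux claim, that $N \mathrel{\dot\in} M$ implies $A_N \in A_M$, is true, and your sketch names the right ingredients: $\frakt(N)$ can be constructed inside $A_M$ (\autoref{Lemma: t(M) is constructible from ATR 0 set}), the internal construction agrees with the external one because it is determined by $N$'s satisfaction relation, and since well-foundedness is absolute for transitive models of $\PRS+\Beta$, the collapse computed inside $A_M$ is the genuine collapse $A_N$. The trade-off: the paper's argument is shorter and directly exhibits an ordinal invariant that ranks $\beta$-models, but relies on cited literature; yours is self-contained modulo the bi-interpretation machinery already recalled in the paper and unifies both parts under a single Foundation argument.

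One caveat must be addressed for your proof to establish the lemma as stated. The interpretation $\frakt$ is only defined on models of $\ATR_0$, so your argument as written covers only $\beta$-models of $\ATR_0$, whereas the statement quantifies over all $\mathcal{L}_2$ $\beta$-models, and well-foundedness of a restriction of a relation does not imply well-foundedness of the relation itself. The gap is harmless but real: it is closed by the standard fact that \emph{every} $\mathcal{L}_2$ $\beta$-model satisfies $\ATR_0$ (see \cite{simpson2009subsystems}, Chapter VII.2) --- $\Sigma^1_1$-correctness forces closure under arithmetical definability, and any transfinite recursion along an internally certified well-ordering exists in $V$ and is pulled into the model by $\Sigma^1_1$-correctness --- so you should cite or prove this rather than stipulate that the models extend $\ATR_0$. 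Two further slips are cosmetic: the reals of $M$ correspond to the elements of $\mathcal{P}(\omega)^{A_M}$, not to ``the hereditarily countable elements of $A_M$'' (every element of $A_M$ is hereditarily countable, by the axiom of countability in $\ATR_0^\set$); and the element of $A_M$ matching the code $c$ is just the real $c$ itself, while $A_N$ is a further element constructed from $c$ inside $A_M$ --- your phrasing conflates the two, though the intended argument is clear.
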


\begin{definition}
    Let $T_0$ be an extension of $\ATR_0$ or $\PRS+\Beta$. Let us define the relation $<^{T_0}_\beta$ for theories extending $T_0$ as follows: 
    \begin{quote}
        For $S,T\supseteq T_0$, $S<^{T_0}_\beta T$ if for every $\beta$-model $M$ of $T$, there is a $\beta$-model $N\in M$ such that $N\vDash S$.
    \end{quote}
\end{definition}

\begin{lemma}
    Let $T_0$ be an extension of $\ATR_0$ or $\PRS+\Beta$. Then $<^{T_0}_\beta$ is well-founded for theories extending $T_0$.
\end{lemma}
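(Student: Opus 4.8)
The plan is to reduce well-foundedness of $<^{T_0}_\beta$ to the well-foundedness of the (coded) membership relation on $\beta$-models, i.e., to \autoref{Lemma: Membership bw beta models is wellfounded}. As the discussion preceding the definition of $<^{T_0}_\beta$ notes, we restrict attention to theories $S\supseteq T_0$ possessing a $\beta$-model; a theory $T$ without a $\beta$-model satisfies $T<^{T_0}_\beta T$ vacuously and so lies outside the well-founded part. Hence the statement should be read as asserting well-foundedness on theories extending $T_0$ that have $\beta$-models, and I would flag this restriction explicitly.

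Suppose toward a contradiction that there is an infinite $<^{T_0}_\beta$-descending sequence $S_0, S_1, S_2, \dots$ of theories extending $T_0$, each with a $\beta$-model, so that $S_{n+1} <^{T_0}_\beta S_n$ for every $n$. From it I would build an infinite descending chain of $\beta$-models. Fix a $\beta$-model $M_0\vDash S_0$. Then recursively: given a $\beta$-model $M_n$ with $M_n\vDash S_n$, apply the defining clause of $S_{n+1}<^{T_0}_\beta S_n$ to the $\beta$-model $M_n$ to obtain a $\beta$-model $M_{n+1}\in M_n$ with $M_{n+1}\vDash S_{n+1}$. In the arithmetic case $\in$ is the coded membership $\dot\in$; in either case the lemma immediately preceding the definition of $<^{T_0}_\beta$ guarantees that a submodel which $M_n$ regards as a $\beta$-model is genuinely one. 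This recursion uses dependent choice at each step, selecting one witness from the nonempty set supplied by the relation.

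The resulting chain $M_0\ni M_1\ni M_2\ni\cdots$ (respectively $M_0\dot\ni M_1\dot\ni M_2\dot\ni\cdots$) is then an infinite descending sequence in the membership relation on well-founded $\mathcal{L}_\in$-models (respectively the coded membership relation on $\mathcal{L}_2$ $\beta$-models), directly contradicting part (1) (respectively part (2)) of \autoref{Lemma: Membership bw beta models is wellfounded}. Whether one invokes part (1) or part (2) is dictated by whether $T_0$ extends $\PRS+\Beta$ or $\ATR_0$, so the argument splits into these two cases but is otherwise identical.

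I do not expect a serious obstacle, since the heavy lifting is already done by \autoref{Lemma: Membership bw beta models is wellfounded}. The one point requiring care is matching the membership appearing in the definition of $<^{T_0}_\beta$ to the exact relation shown well-founded: one must check that each $M_{n+1}$ is an \emph{external} $\beta$-model (not merely one in the sense of $M_n$) and that $M_{n+1}\in M_n$ is genuine (coded) membership between such models, so that the chain is admissible as a counterexample. Ensuring the restriction to theories with $\beta$-models is likewise essential, lest the trivial cycles $T<^{T_0}_\beta T$ falsify the statement.
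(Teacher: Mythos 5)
Your proof is correct and follows essentially the same route as the paper's: assume an infinite $<^{T_0}_\beta$-descending sequence, use Dependent Choice to extract an $\in$-decreasing (respectively $\dot\in$-decreasing) chain of $\beta$-models, and contradict \autoref{Lemma: Membership bw beta models is wellfounded}. Your explicit restriction to theories possessing $\beta$-models is a point the paper makes only in its introduction rather than in the proof itself, but it is the same reading of the statement, not a different argument.
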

\begin{proof}
    Suppose that there is a sequence $\langle T_i \mid 1\le i<\omega\rangle$ of theories extending $T_0$ such that $T_1>^{T_0}_\beta T_2 >^{T_0}_\beta \cdots$. By Dependent Choice, we can choose $\beta$-models $M_i$ such that $M_i\vDash T_i$ and $M_{i+1}\in M_i$ for all $1\le i<\omega$. Hence we get an $\in$-decreasing sequence of $\beta$-models, a contradiction.
\end{proof}

\begin{definition}
    For $T_0$ extending either $\ATR_0$ or $\PRS+\Beta$ and $T\supseteq T_0$, let $|T|_\beta^{T_0}$ be the rank of $T$ given by $<^{T_0}_{\beta}$.
\end{definition}

So far we used well-founded models to define the $\beta$-rank of a set theory. We may use transitive models instead:
\begin{definition}
    Let $T_0$ be an extension of $\PRS+\Beta$. Let us define the relation $<^{T_0}_\trans$ for theories extending $T_0$ as follows:
    \begin{quote}
        For $S,T\supseteq T_0$, $S<^{T_0}_\trans T$ if for every transitive model $M$ of $T$, there is a transitive model $N\in M$ such that $N\vDash S$.
    \end{quote}
\end{definition}

It turns out that using transitive models instead of well-founded models does not change anything:
\begin{lemma} \label{Lemma: Beta rank and transitive rank}
    Let $T_0$ be an extension of $\PRS+\Beta$, and $S,T\supseteq T_0$. Then the following are equivalent:
    \begin{enumerate}
        \item $S<^{T_0}_\beta T$
        \item $S<^{T_0}_\trans T$
    \end{enumerate}
\end{lemma}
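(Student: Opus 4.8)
The plan is to prove both implications by passing between a well-founded model and its transitive collapse, exploiting the fact that $T_0 \supseteq \PRS + \Beta$ guarantees the Mostowski collapse is available both externally (in the metatheory) and internally (inside any model of $T_0$, via the axiom $\Beta$). Two observations drive everything: first, a transitive model is in particular a well-founded model, hence a $\beta$-model; and second, every well-founded model is isomorphic, via the external Mostowski collapse, to a unique transitive model, and both $\vDash$ and the membership relation between models are invariant under such isomorphisms. The whole argument is therefore a matter of applying the hypothesis on the collapsed side and transporting the witness back.

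For the direction $(2)\Rightarrow(1)$, I would assume $S<^{T_0}_\trans T$ and take an arbitrary $\beta$-model (i.e.\ well-founded model) $M$ of $T$. Since $M\vDash\PRS$ it is extensional, so I would form its external Mostowski collapse $\pi\colon M\to\bar M$, obtaining a transitive $\bar M\cong M$ with $\bar M\vDash T$. Applying the hypothesis to the \emph{transitive} model $\bar M$ yields a transitive $N'\in\bar M$ with $N'\vDash S$. Pulling back along $\pi$, the point $N=\pi^{-1}(N')$ lies in the domain of $M$, and the substructure of $M$ consisting of the $M$-predecessors of $N$ is carried isomorphically by $\pi$ onto $(N',\in)$; hence $N$ is well-founded (being a substructure of the well-founded $M$) and $N\vDash S$. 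This produces the $\beta$-model $N\in M$ required by $<^{T_0}_\beta$.

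For the direction $(1)\Rightarrow(2)$, I would assume $S<^{T_0}_\beta T$ and take a transitive model $M$ of $T$. A transitive model is well-founded, hence a $\beta$-model, so the hypothesis furnishes a well-founded $N\in M$ with $N\vDash S$; the remaining task is to replace $N$ by a \emph{transitive} model inside $M$. Here I would work internally to $M$: since $M$ is transitive and $N$ is genuinely well-founded, downward absoluteness of well-foundedness shows that $M$ correctly regards $N$ as a well-founded extensional relation, so by $\Beta$ the model $M$ constructs the collapsing function of $N$ and an internal transitive set $N''$ that $M$ believes is isomorphic to $N$. Because being an isomorphism is $\Delta_0$ and $M$ is transitive, this isomorphism is genuine, so externally $N\cong N''$; thus $N''$ is a transitive model (its membership relation is the real $\in$, as $N''$ is transitive), $N''\vDash S$, and $N''\in M$, which is exactly what $<^{T_0}_\trans$ demands.

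The main obstacle is the bookkeeping around absoluteness rather than any single deep idea. I must verify that well-foundedness of $N$ transfers correctly between the metatheory and $M$ (handled by downward absoluteness of well-foundedness for transitive $M$), and that the internally built collapse $N''$ is truly transitive with the real membership relation and truly isomorphic to $N$ from the outside (guaranteed by $\Delta_0$-absoluteness of the isomorphism relation and of transitivity for transitive $M$). Once these points are checked, both directions reduce to the existence of a Mostowski collapse, which is precisely what $\PRS+\Beta$ supplies externally in the metatheory and internally via $\Beta$.
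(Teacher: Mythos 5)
Your proof is correct and takes essentially the same approach as the paper's: for $(1)\Rightarrow(2)$ you collapse the coded well-founded model inside the transitive $M$ using the internal axiom $\Beta$, and for $(2)\Rightarrow(1)$ you collapse $M$ externally and pull the transitive witness back along $\pi^{-1}$. The absoluteness bookkeeping you spell out is precisely what the paper's shorter proof leaves implicit.
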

\begin{proof}
    Suppose that $S<^{T_0}_\beta T$ holds, and let $M$ be a transitive model of $T$. Then $M$ contains a well-founded model $N$ of $S$. Since $M\vDash\Beta$, $M$ contains a transitive collapse of $N$, which is a transitive model of $S$. Hence we have $S<^{T_0}_\trans T$.

    Conversely, assume that $S<^{T_0}_\trans T$, and let $M$ be a well-founded model of $T$. Let $\pi\colon M\to \pi"[M]$ be the transitive collapse of $M$, then $\pi"[M]$ contains a transitive model $N$ of $S$. We can see that $\pi^{-1}(N)\in M$ is a well-founded model of $S$.
\end{proof}

\begin{corollary}
    Let $T_0$ be an extension of $\PRS+\Beta$. Then both of $<^{T_0}_\beta$ and $<^{T_0}_\trans$ define the same rank function.
\end{corollary}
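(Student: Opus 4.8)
The plan is to derive this directly from the preceding \autoref{Lemma: Beta rank and transitive rank}. That lemma shows that for all $S,T\supseteq T_0$ we have $S<^{T_0}_\beta T$ if and only if $S<^{T_0}_\trans T$; in other words, $<^{T_0}_\beta$ and $<^{T_0}_\trans$ are one and the same relation on the class of theories extending $T_0$. Since the rank function associated to a well-founded relation $R$ is computed solely from $R$ via the transfinite recursion $\rho_R(T)=\sup\{\rho_R(S)+1 : S\mathbin{R}T\}$, depending on no data beyond the relation itself, two relations that coincide must induce the same rank function.

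Concretely, I would first record that both relations are well-founded, so that the rank functions are defined at all: $<^{T_0}_\beta$ is well-founded by the well-foundedness of $<^{T_0}_\beta$ established above, and $<^{T_0}_\trans$ is well-founded because it equals $<^{T_0}_\beta$. I would then run a routine transfinite induction along $<^{T_0}_\beta$ showing that the value of $\betarank_{T_0}(T)$ computed via $<^{T_0}_\beta$ and the value computed via $<^{T_0}_\trans$ agree: at each theory $T$ the supremum defining the rank is taken over exactly the same set of predecessors (the $S$ with $S<^{T_0}_\beta T$, equivalently $S<^{T_0}_\trans T$), whose ranks coincide by the induction hypothesis.

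The entire mathematical content of the corollary is already packed into \autoref{Lemma: Beta rank and transitive rank}; once the two relations are known to be identical, there is no remaining obstacle, and the proof is pure bookkeeping. I therefore expect no difficult step beyond correctly citing the equivalence of the two relations and noting that a rank function is a function of its underlying relation alone.
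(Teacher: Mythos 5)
Your proof is correct and takes the same route as the paper, which states this corollary without proof precisely because, as you observe, \autoref{Lemma: Beta rank and transitive rank} shows the two relations are literally the same relation, and a rank function is determined entirely by its underlying (well-founded) relation. The transfinite induction you sketch is harmless but unnecessary bookkeeping; the identification of the relations already finishes the argument.
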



Likewise, the notion of $\beta$-rank for second-order arithmetic and the notion of $\beta$-rank for set theory are not too different: For $T_0\supseteq \ATR_0$ and $T\supseteq T_0$, $|T|_{\beta}^{T_0} = |T^\set|_{\beta}^{T_0^\set}$. This follows from the following lemma, which follows from the presence of bi-interpretations between $\ATR_0$ and $\ATR_0^\set$.

\begin{lemma} \label{Lemma: Two notions of beta rank are the same} \pushQED{\qed}
    Let $T_0$ be an extension of $\ATR_0$, and $S,T\supseteq T_0$. Then the following are equivalent:
    \begin{enumerate}
        \item $S <^{T_0}_\beta T$.
        \item $S^\set <^{T_0^\set}_\beta T^\set$. \qedhere
    \end{enumerate}
\end{lemma}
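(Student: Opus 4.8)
The plan is to exploit the bi-interpretation between $\ATR_0$ and $\ATR_0^\set$ furnished by $\frakt$ and $\fraki$, together with the model-theoretic interpretations $\fraki(\cdot)$ and $\frakt(\cdot)$ and their interaction with well-foundedness. Since $\betarank$ is defined from the relation $<^{T_0}_\beta$, it suffices to prove the biconditional (1) $\Leftrightarrow$ (2) as stated; the equality of ranks $\betarank_{T_0}(T) = \betarank_{T_0^\set}(T^\set)$ then follows by a trivial induction along the well-founded orderings (or simply because isomorphic well-founded orders have the same rank function). So I would focus entirely on the equivalence of the two instances of $<_\beta$.

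First I would set up the correspondence between $\beta$-models of $T$ (in $\mathcal{L}_2$) and $\beta$-models of $T^\set$ (in $\mathcal{L}_\in$). The key facts, already available in the excerpt, are: for a model $A\vDash\ATR_0^\set$, the model $\fraki(A)$ satisfies $\ATR_0$ and is (isomorphic to) a $\beta$-model iff $A$ is well-founded; conversely for $M\vDash\ATR_0$, $\frakt(M)\vDash\ATR_0^\set$, with natural isomorphisms $\fraki(\frakt(M))\cong M$ and $\frakt(\fraki(A))\cong A$. I would first check that these operations respect the extra axioms, i.e.\ $M\vDash T$ iff $\frakt(M)\vDash T^\set$ and $A\vDash T^\set$ iff $\fraki(A)\vDash T$; this is immediate from the definition $T^\set = \ATR_0^\set\cup\{\phi^\fraki\mid\phi\in T\}$ together with \autoref{Lemma: set theory side i circ t is the identity} and \autoref{Lemma: SOA side t circ i is the identity}. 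Combined with the well-foundedness clause, this gives a bijection (up to isomorphism) between $\beta$-models of $T$ and $\beta$-models of $T^\set$, namely $M\mapsto\frakt(M)$ and $A\mapsto\fraki(A)$, and similarly for $S$ and $S^\set$.

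The heart of the argument is to transfer the membership clause ``there is a $\beta$-model $N\in M$ with $N\vDash S$'' across the interpretation. Here I would use \autoref{Lemma: t(M) is constructible from ATR 0 set}, which says $\ATR_0^\set$ proves that $\frakt(M)$ exists whenever $M\vDash\ATR_0$: this is exactly what lets us \emph{internalize} the interpretation. Assume (1), so $S<^{T_0}_\beta T$, and let $A$ be a $\beta$-model of $T^\set$; then $\fraki(A)$ is a $\beta$-model of $T$, so it contains (codes) a $\beta$-model $N\in\fraki(A)$ with $N\vDash S$. Since $A$ is well-founded and $\fraki(A)$ and $A$ have the same underlying objects, $A$ can internally form $\frakt(N)$, which by the internalized version of the bi-interpretation theorem is a model of $S^\set$ lying in $A$; well-foundedness of $N$ transfers to well-foundedness of $\frakt(N)$, so $\frakt(N)$ is a $\beta$-model of $S^\set$ inside $A$. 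This yields (2). The converse direction is symmetric, starting from a $\beta$-model $M$ of $T$, passing to $\frakt(M)\vDash T^\set$, extracting an internal $\beta$-model of $S^\set$, and applying $\fraki$ inside $M$ to recover a coded $\beta$-model of $S$.

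The main obstacle I anticipate is the bookkeeping around \emph{internalization}: one must be careful that ``$N\in\fraki(A)$'' really means $N$ is an element coded in $A$ in a way that $A$ can manipulate, and that applying $\frakt$ (or $\fraki$) to such a coded model can be carried out \emph{inside} $A$ and produces a coded model of the right theory that is genuinely an element of $A$. This is precisely where \autoref{Lemma: t(M) is constructible from ATR 0 set} and the uniform, $\ATR_0^\set$-provable nature of the bi-interpretation are essential, and where \autoref{Lemma: Two beta model notions are equal} is needed to match the two senses of ``$\beta$-model'' for the internal models. Once the definability and absoluteness of these constructions are verified, the well-foundedness transfer is routine (a $\Delta_0$-absoluteness observation as in \autoref{Lemma: Two beta model notions are equal}), and the equivalence follows.
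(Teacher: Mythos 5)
Your proposal is correct and follows essentially the same route as the paper's proof: both directions pass a $\beta$-model through $\fraki$ (resp.\ $\frakt$), extract the coded model of the other theory, and then internalize $\frakt$ (resp.\ $\fraki$) inside the ambient model via \autoref{Lemma: t(M) is constructible from ATR 0 set}, using \autoref{Lemma: set theory side i circ t is the identity} and \autoref{Lemma: SOA side t circ i is the identity} to check the resulting model satisfies $S^\set$ (resp.\ $S$), with well-foundedness/$\Sigma^1_1$-correctness transferring exactly as you describe.
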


\section{All ranks are countable}

The first main theorem is that every theory's $<_\beta$-rank is countable. We emphasize that this theorem concerns all deductively closed sets of formulas, not merely those that are, e.g., recursively axiomatized. Moreover, this theorem applies both to $\mathcal{L}_2$ theories and to $\mathcal{L}_\in$ theories.

For an upper bound, let us rely on the following subsidiary notion for the rank of transitive models:
\begin{definition}
    For two transitive models $M$, $N$ of $T_0$, let us consider the relation $\in$, which is well-founded. Hence $\in$ defines the rank function over the collection of all transitive models of $T_0$. Let us define
    \begin{equation*}
        \betarank_{T_0}(M) = \{\betarank_{T_0}(N) \mid N\in M\land\text{$N$ is transitive model of $T_0$}\}.
    \end{equation*}
\end{definition}

\begin{lemma} \label{Lemma: beta rank of a theory is bounded by a beta rank of a model}
    For every $T\supseteq T_0$, we have
    \begin{equation*}
        |T|_{\beta}^{T_0}\le \min \{\betarank_{T_0}(M) \mid M\vDash T\text{ is transitive}\}.
    \end{equation*}
\end{lemma}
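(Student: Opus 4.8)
The plan is to prove a stronger pointwise statement by well-founded induction along $<^{T_0}_\beta$ (legitimate, since that relation is well-founded): for every $S \supseteq T_0$ and every transitive model $N \vDash S$, one has $\betarank_{T_0}(S) \le \betarank_{T_0}(N)$. Specializing to $S = T$ and then minimizing over all transitive $M \vDash T$ gives the lemma. Here I work in the set-theoretic setting $T_0 \supseteq \PRS + \Beta$, where transitive models and \autoref{Lemma: Beta rank and transitive rank} are available; if $T$ has no transitive model at all, the right-hand side is a minimum over the empty set and the inequality is vacuous.

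For the inductive step I would assume the claim for every $S <^{T_0}_\beta T$ and fix a transitive $M \vDash T$. Unwinding the definition of rank in a well-founded relation, $\betarank_{T_0}(T) = \sup\{\betarank_{T_0}(S)+1 \mid S <^{T_0}_\beta T\}$, so it suffices to show $\betarank_{T_0}(S)+1 \le \betarank_{T_0}(M)$ for each $S <^{T_0}_\beta T$.

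The key link between the two rank notions is supplied by \autoref{Lemma: Beta rank and transitive rank}: from $S <^{T_0}_\beta T$ I get $S <^{T_0}_\trans T$, and since $M$ is a transitive model of $T$, this produces a transitive $N \in M$ with $N \vDash S$. The induction hypothesis applied to $S$ and $N$ gives $\betarank_{T_0}(S) \le \betarank_{T_0}(N)$, while the fact that $N \in M$ is itself a transitive model of $T_0$ gives $\betarank_{T_0}(N)+1 \le \betarank_{T_0}(M)$ straight from the definition of $\betarank_{T_0}(M)$. Chaining these, $\betarank_{T_0}(S)+1 \le \betarank_{T_0}(N)+1 \le \betarank_{T_0}(M)$; taking the supremum over $S$ closes the induction.

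I do not expect a genuine obstacle here, only one point requiring care: the passage from an arbitrary $\beta$-model of $S$ lying in $M$ to a \emph{transitive} one lying in $M$. This is exactly what \autoref{Lemma: Beta rank and transitive rank} buys (via the internal $\Beta$-collapse inside $M$); without invoking it, I would have to run the Mostowski collapse inside $M$ by hand and verify the collapse is an element of $M$. The remaining work is bookkeeping: confirming that a single $\in$-step among transitive models of $T_0$ matches a single $<^{T_0}_\beta$-step among theories, so that the two recursive rank definitions align term by term.
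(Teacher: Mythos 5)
Your proposal is correct and matches the paper's own argument: the paper likewise proves the pointwise bound $\betarank_{T_0}(T)\le\betarank_{T_0}(M)$ for every transitive $M\vDash T$ by induction (on the rank ordinal $\alpha=\betarank_{T_0}(T)$ rather than by well-founded induction along $<^{T_0}_\beta$, an inessential difference), extracting for each theory below $T$ a transitive model inside $M$ and chaining the inductive hypothesis with the definition of $\betarank_{T_0}(M)$. Your explicit appeal to \autoref{Lemma: Beta rank and transitive rank} to get a \emph{transitive} witness $N\in M$ is exactly the step the paper performs implicitly in the line ``From $U<_{T_0} T$, we can see there is a transitive model $N\in M$ of $U$.''
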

\begin{proof}
    Let us take $\alpha = |T|_{\beta}^{T_0}$ and fix a transitive model $M$ of $T$. We want to show that $\alpha \le \betarank_{T_0}(M)$ by induction on $\alpha$.

    Suppose that $\gamma<\alpha$, so we have a theory $U<^{T_0}_{\beta} T$ such that $\gamma=|U|_{\beta}^{T_0}$.
    From $U<^{T_0}_{\beta} T$, we can see there is a transitive model $N\in M$ of $U$. By the inductive hypothesis, we have $\gamma\le \betarank_{T_0}(N)$. From $N\in M$, we get $\betarank_{T_0}(M)\ge \gamma + 1$.
    Since it holds for every $\gamma<\alpha$, we have $\alpha\le \betarank_{T_0}(M)$, as desired.
\end{proof}

\begin{theorem}
For every $T$ with a $\beta$-model, $|T|_\beta < \omega_1$.
\end{theorem}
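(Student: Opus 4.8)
The plan is to reduce the desired bound on $\betarank_{T_0}(T)$ to a bound on the $\beta$-rank of a single well-chosen transitive model, and then to show that \emph{countable} transitive models have countable rank by a well-founded induction. By \autoref{Lemma: beta rank of a theory is bounded by a beta rank of a model} we have $\betarank_{T_0}(T)\le\betarank_{T_0}(M)$ for every transitive $M\vDash T$, so it suffices to exhibit, for each such $T$, one transitive model $M\vDash T$ with $\betarank_{T_0}(M)<\omega_1$.

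First I would produce a countable transitive model of $T$ in the $\mathcal{L}_\in$ case. Since $T$ has a $\beta$-model, i.e.\ a well-founded model $\mathfrak{M}$, I apply downward L\"owenheim--Skolem to pass to a countable elementary submodel $\mathfrak{N}\preceq\mathfrak{M}$. The point is that $\mathfrak{N}$ is still well-founded, because its membership relation is a subrelation of the well-founded relation $\in^{\mathfrak{M}}$, and $\mathfrak{N}$ continues to satisfy the first-order axiom of extensionality; hence the Mostowski collapse applies and yields a countable transitive $M\cong\mathfrak{N}$ with $M\vDash T$.

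Next comes the key claim: every countable transitive model $M$ of $T_0$ satisfies $\betarank_{T_0}(M)<\omega_1$. I would prove this by $\in$-induction (the membership relation among transitive models being well-founded by Foundation). Given $M$ countable and transitive, any $N\in M$ that is a transitive model of $T_0$ satisfies $N\subseteq M$ by transitivity, so $N$ is countable, and the induction hypothesis gives $\betarank_{T_0}(N)<\omega_1$. Since $M$ is countable it has at most countably many elements, so $\betarank_{T_0}(M)=\sup\{\betarank_{T_0}(N)+1\mid N\in M\}$ is a countable supremum of countable ordinals, hence still below $\omega_1$. Together with the previous paragraph this bounds $\betarank_{T_0}(T)$ below $\omega_1$ for $\mathcal{L}_\in$ theories. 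For $\mathcal{L}_2$ theories $T\supseteq T_0\supseteq\ATR_0$ I would then invoke \autoref{Lemma: Two notions of beta rank are the same}, which gives $\betarank_{T_0}(T)=\betarank_{T_0^\set}(T^\set)$; a $\beta$-model of $T$ produces (via $M\mapsto\frakt(M)$) a well-founded, hence transitive, model of $T^\set$, and $T_0^\set\supseteq\PRS+\Beta$, so the $\mathcal{L}_\in$ case already established applies to $T^\set$.

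The main obstacle is really the two-part observation packed into the key claim: that elements of a countable transitive set are themselves countable, and that a countable supremum of countable ordinals stays below $\omega_1$. This is exactly where the countability of the chosen model is used, and it explains why the first step must extract a \emph{countable} (not merely transitive) model rather than collapsing an arbitrary well-founded one. Everything else is routine: the reduction via \autoref{Lemma: beta rank of a theory is bounded by a beta rank of a model}, the preservation of well-foundedness under passage to substructures, and the bi-interpretation transfer for the arithmetic case.
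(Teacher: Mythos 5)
Your proposal is correct and follows essentially the same route as the paper's proof: reduce to the $\mathcal{L}_\in$ case via the bi-interpretation lemma, obtain a countable transitive model by L\"owenheim--Skolem plus Mostowski collapse, bound $\betarank_{T_0}(T)$ by $\betarank_{T_0}(M)$ using \autoref{Lemma: beta rank of a theory is bounded by a beta rank of a model}, and observe that a countable transitive model has countable rank. Your explicit $\in$-induction merely spells out what the paper compresses into the remark that $M$ contains only countably many transitive models.
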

\begin{proof}
By \autoref{Lemma: Beta rank and transitive rank} and \autoref{Lemma: Two notions of beta rank are the same}, it suffices to show that for every $T_0\supseteq \PRS+\Beta$ and $T\supseteq T_0$ with a $\beta$-model, $|T|_{\beta}^{T_0} <\omega_1$.

Suppose that $M$ is a $\beta$-model of $T$. By taking its Skolem hull if necessary (may not be definable in $M$), we may assume that $M$ is countable. Furthermore, we may assume that $M$ is transitive by considering its transitive collapse. Then $|T|_{\beta}^{T_0}$ is less than or equal to $\betarank_{T_0}(M)$. There are at most countably many transitive models in $M$, so $\betarank_{T_0}(M)<\omega_1$.
\end{proof}

\section{Ranks are cofinal for theories without \texorpdfstring{$V=L$}{V=L}}


In this section, we prove that the $\beta$-rank of theories extending various $T_0$ is cofinal in $\omega_1$. Every proof of the cofinality in this section relies on the following theorem:
\begin{theorem} \label{Theorem: Focal theorem for cofinality of beta rank}
    Suppose that $T_0 \supseteq \PRS + \mathsf{Beta}$ satisfies the following \emph{real encoding property}: For every real $X$, there is $T\supseteq T_0$ such that $T$ has a countable transitive model and every transitive model of $T$ contains $X$.
    Then the supremum of $|T|_{\beta}^{T_0}$ for $T\supseteq T_0$ is $\omega_1$.
\end{theorem}
\begin{proof}
    We prove by induction on $\alpha<\omega_1$ that there is $T\supseteq T_0$ of $\beta$-rank at least $\alpha$. Let us inductively assume that each $\xi<\alpha$ has a theory whose $\beta$-rank is at least $\xi$. Let $\{T_n\mid n\in\bbN\}$ be the collection of extensions of $T_0$ such that for each $\xi<\alpha$ we can find $n$ such that $|T_n|_{\beta}^{T_0}\ge\xi$.
    Let us choose $\{X_n\mid n\in \bbN\}$ such that $X_n$ codes a transitive model of $T_n$, and $X$ be a real coding $\{X_n\mid n\in\bbN\}$.
    Suppose that $T$ is an extension of $T_0$ such that every transitive model of $T$ contains $X$. If $M\vDash T$, then $M$ contains a transitive model of $T_n$ for each $n\in\bbN$, so $T_n <^{T_0}_{\beta} T$ for each $n$. Hence
    \begin{equation*}
        |T|_{\beta}^{T_0} \ge \sup_{n<\omega} \bigl(|T_n|_{\beta}^{T_0}+1\bigr) \ge \alpha. \qedhere 
    \end{equation*}
\end{proof}

As an example, we can see that $T_0 = \ZFC$ satisfies the real encoding property if every real is contained in a transitive model of $\ZFC$, via coding a real into a continuum pattern.
\begin{example}
    Suppose that for every real $X$, there is a countable transitive model of $\ZFC$ containing $X$.\footnote{This hypothesis holds when there is an inaccessible cardinal: For an inaccessible $\kappa$ and a real $X$, consider a countable Skolem hull of $V_\kappa$ containing $X$.}
    Let us consider the theory
    \begin{equation*}
        T_X:=\mathsf{ZFC}+\{2^{\aleph_n}=\aleph_{n+1} \mid  n\notin X\} +\{2^{\aleph_n}=\aleph_{n+2} \mid n\in X\}.
    \end{equation*}
    Every transitive model of $T_X$ contains $X$ since $T_X$ can decode $X$ from the continuum pattern. We claim that $T_X$ has a transitive model: Let $M$ be a countable transitive model of $\ZFC$ containing $X$. By taking $L[X]^M$ if necessary, we may assume that $M$ satisfies $\mathsf{GCH}$.
    Working inside $M$, we consider the Easton forcing $\mathbb{P}$ whose continuum pattern codes $X$. Let $G$ be a $\mathbb{P}$-generic filter in $M$. $M[G]$ is the desired model.
\end{example}

$\ATR_0^\set$ contains the axiom of countability, so we cannot encode reals into its extensions via the continuum pattern. Nevertheless, we can encode reals into the structure of the constructibility degrees.
\begin{definition}
    For two reals $X$ and $Y$, we say $X\le_L Y$ if $L[X]\subseteq L[Y]$. We say $X \equiv_L Y$ if $L[X]=L[Y]$. An $\equiv_L$-equivalence class is called a \emph{constructibility degree}.
\end{definition}
Note that $X\le_L Y$ iff $X \in L_{\omega_1}[Y]$ since every real in $L[Y]$ is in $L_{\omega_1}[Y]$, and $L[X]\subseteq L[Y]$ iff $X\in L[Y]$.
The following theorem by M. Groszek \cite[Corollary 14]{Groszek1988IteratedPerfect} says we can force in a way that the constructibility degree encodes a fixed subset of $\omega_1$:
\begin{theorem}[Groszek] \label{Theorem: Groszek's theorem} \pushQED{\qed}
    Let $X\subseteq \omega_1$, and suppose that there is maximum $a$ in the constructibility degree. Then there is a forcing $\mathbb{P}$ preserving $\omega_1$ such that for every $\mathbb{P}$-generic filter $G$ over $V$, the constructibility degrees above $a$ in $V[G]$ are precisely
    \begin{equation*}
        \{d_\gamma\mid \gamma<\omega_1 \} \cup \{a_\gamma,b_\gamma\mid \gamma\in X\}
    \end{equation*}
    satisfying the following:
    \begin{enumerate}
        \item For $\gamma<\beta$, $d_\gamma <_L d_\beta$.
        \item When $\gamma\in X$, $d_\gamma <_L a_\gamma,b_\gamma<_L d_{\gamma+1}$, and $a_\gamma$ and $b_\gamma$ are $\le_L$-incompatible. \qedhere 
    \end{enumerate}
\end{theorem}

Let $\mathsf{Z}_2^\set$ be the theory $\ZFC^-$ plus ``Every set is countable.'' We can see that $\mathsf{Z}_2^\set$ is precisely the sets-as-trees interpretation of full second-order arithmetic.
\begin{theorem} \label{Theorem: Real encoding property of Z2}
    $\mathsf{Z}_2^\set$ has the real encoding property.
\end{theorem}
\begin{proof}
    For a given real $X$, consider the theory $T_X$ comprising the following axioms:
    \begin{enumerate}
        \item Axioms of $\mathsf{Z}_2^\set$.
        \item There is a real $Y$ such that the degree of constructibility above $Y$ forms a well-founded partial order of height $\omega+\omega$, and moreover
        \begin{enumerate}
            \item Each level of the constructibility degrees above $Y$ has at most two constructibility degrees.
            \item For every $n<\omega$, there is only one constructibility degree of level $n$ above $Y$.
            \item For each (meta-)natural number $n$, the $(\omega+h_n+n+1)$-th level of the construcbility degree over $Y$ has two incompatible degrees, where $h_n$ is the number of elements in $\{m<n\mid m\in X\}$.
            \item For (meta-)natural $m$ not of the form $h_n+n+1$, the $(\omega+m)$-th level of the construcbility degree over $Y$ has only one degree.
        \end{enumerate}
    \end{enumerate}
    $T_X$ requires that the degree of constructibility codes $\{\omega + n\mid n\in X\}$ above the top constructiblity degree. 
    We will construct a model of $T_X$ by forcing over $L[X]$ by applying \autoref{Theorem: Groszek's theorem}. $L[X]$ has $X$ as the top in the constructibility degree, so the reader may wonder why we do not simply add a constructibility degree pattern coding $X$ instead of padding the $\omega$-sequence of degrees first.
    However, the constructibility degrees over $L[X]$ may have a tail of constructibility degrees isomorphic to $-\omega$. In this case, the generic extension $L[X][G]$ may not find where the top real in the ground model is. To avoid losing the starting point, we add the $\omega$-sequence of reals and then code the real after the $\omega$-sequence, so that the generic extension can read off the real from the last limit point of the constructibility degree.

    Every transitive model of $T_X$ contains $X$ since $T_X$ can define $X$ from the constructibility degrees. Hence it remains to show that $T_X$ has a countable transitive model.
    Let $X$ be a real, and consider the inner model $L[X]$, which thinks there is a maximum real in the constructibility degree (namely $X$).
    Working in $L[X]$, let $G$ be a $\mathbb{P}$-generic filter over $L[X]$ for the forcing poset $\mathbb{P}$ given by \autoref{Theorem: Groszek's theorem} coding the set $\{\omega+n\mid n\in X\}$ into the constructibility degree. Then we can see that $M_0 = (H_{\omega_1})^{L[X][G]}$ is a model of $T_X$.
    
    However, we proved the existence of $M_0$ in $L[X][G]$ and not in $V$, and $M_0$ may not be countable. To solve both issues, let us work in $L[X][G]$ and consider a countable elementary submodel $M_1\prec M_0$. We can encode $M_1$ as a real, which we can treat as a well-founded model of $T_X$. Thus $L[X][G]$ satisfies the assertion ``There is a countable well-founded model of $T_X$,'' which is a $\Sigma^1_2$-assertion with a parameter $X$.
    By Shoenfield absoluteness, the assertion also holds in $V$, so $V$ thinks $T_X$ has a countable well-founded model. Collapsing a countable well-founded model of $T_X$ gives a desired model.
\end{proof}

\begin{corollary} \phantom{a}
    \begin{enumerate}
        \item $|T|_{\beta}^{\ATR_0^\set}$ for $T\supseteq \ATR_0^\set$ is cofinal in $\omega_1$.
        \item $|T|_{\beta}^{\ATR_0}$ for $T\supseteq \ATR_0$ is cofinal in $\omega_1$.
    \end{enumerate}
\end{corollary}
\begin{proof}
    The first claim follows from that $|T|_{\beta}^{\ATR_0^\set}\ge |T|_{\beta}^{\mathsf{Z}_2^\set}$ for $T\supseteq \mathsf{Z}_2^\set$. The second claim follows from \autoref{Lemma: Two notions of beta rank are the same}.
\end{proof}

\section{Ranks are cofinal for theories with \texorpdfstring{$V=L$}{V=L}}

In personal communication, John Steel asked whether the $\beta$-ranks of theories extending $\ZFC + (V=L)$ is cofinal in $\omega_1$. In fact, the supremum of the $\beta$-ranks of theories containing $V=L$ is $\omega_1^L$. First, we show the lemma asserting that the $\beta$-rank of a transitive model is bounded by its height:
\begin{lemma} \label{Lemma: Beta rank of a model is bounded by its height}
    Let $T_0 \supseteq \PRS + \mathsf{Beta}$ and $M$ be a transitive model of $T_0$. Then $\betarank_{T_0}(M) \le \Ord\cap M$.
\end{lemma}
\begin{proof}
    We prove it by induction on $M$: Suppose that the desired inequality holds for every transitive model $N\in M$ of $T_0$. Then
    \begin{align*}
        \betarank_{T_0}(M) & \le \{(\Ord\cap N) + 1 \mid N\in M \land N\text{ is a transitive model of } T_0\} \\
        & \le \Ord\cap M,
    \end{align*}
    where the last inequality holds since $\Ord\cap N\in M$ and $M$ is closed under successor ordinals.
\end{proof}

\begin{proposition}
    Let $T$ be a theory extending $\PRS + \mathsf{Beta} + (V=L)$ and having a transitive model. Then $|T|^{\PRS+\Beta}_\beta < \omega_1^L$.
\end{proposition}
\begin{proof}
    Suppose that $M$ is a transitive model of $T$. Then $M = L_\alpha$ for some ordinal $\alpha$. Now let us find $\alpha'<\omega_1^L$ such that $L_{\alpha'}\prec L_\alpha$ by computing a transitive collapse of a Skolem hull of $L_\alpha$ in $L$. By \autoref{Lemma: Beta rank of a model is bounded by its height}, we have $|T|_{\beta}^{\PRS+\mathsf{Beta}} \le \alpha' < \omega_1^L$.
\end{proof}

However, we can also show that the $\beta$-rank of theories extending $\PRS + \mathsf{Beta} + (V=L)$ is cofinal in $\omega_1^L$. We will suggest two proofs for this claim; One is easier, and the other is more convoluted, but the idea of the other proof will be employed to calculate the $\beta$-rank of specific theories.

The following lemmas are modifications of exercises in Devlin \cite[Exercise II.3.B--3.D.]{Devlin1984Constructibility}. We include their proofs for completeness. The following lemma follows from Tarski undefinability argument.

\begin{lemma} \label{Lemma: No definable truth predicate over Lalpha}
    Let $\alpha$ be a limit ordinal, and fix a recursive enumeration $\{\sigma_m\mid m<\omega\}$ of all sentences in the language $\{\in\}$. Then there is no formula $\varphi(x)$ with a single free variable such that $L_\alpha\vDash \sigma_m \iff L_\alpha\vDash\varphi(\underline{m})$. ($\underline{m}$ is a term denoting the natural number $m$.)
\end{lemma}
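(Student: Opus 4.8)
The plan is to prove \autoref{Lemma: No definable truth predicate over Lalpha} by a direct diagonalization, mimicking Tarski's undefinability of truth but adapted to the structure $L_\alpha$ with its recursive enumeration of sentences. Suppose toward a contradiction that such a formula $\varphi(x)$ exists, so that $\varphi$ serves as a truth predicate for sentences (coded by their index $m$ in the fixed enumeration) over $L_\alpha$. The key point I would exploit is that $L_\alpha$, being a model of enough set theory (for a limit $\alpha$ it satisfies the relevant fragment, in particular it has the natural numbers and can carry out the arithmetization of syntax), can internally recognize and manipulate the recursive enumeration $\{\sigma_m \mid m<\omega\}$ and the substitution function on codes.

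First I would set up the syntactic machinery: since the enumeration $m\mapsto \sigma_m$ is recursive and $L_\alpha$ contains $\omega$ and is closed under recursive functions on $\omega$ (for $\alpha$ limit, $L_\alpha$ is closed under the primitive recursive set functions, hence computes the diagonal/substitution function), there is a total recursive ``diagonal'' function $d\colon\omega\to\omega$ such that $\sigma_{d(m)}$ is logically equivalent over $L_\alpha$ to the sentence asserting $\lnot\varphi(\underline{m})$ with the numeral for the code obtained by substituting $m$'s own index. More carefully, I would apply the diagonal lemma internally: using $\varphi$ I build, via the recursive enumeration, a sentence $\psi$ whose code $k$ satisfies $\sigma_k = \lnot\varphi(\underline{k})$ up to the equivalence guaranteed by the enumeration. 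The hypothesis on $\varphi$ then reads $L_\alpha\vDash \sigma_k \iff L_\alpha\vDash\varphi(\underline{k})$, while the construction of $\sigma_k$ gives $L_\alpha\vDash\sigma_k \iff L_\alpha\vDash\lnot\varphi(\underline{k})$, an immediate contradiction.

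The main obstacle I anticipate is purely bookkeeping rather than conceptual: one must verify that the fixed-point/diagonal construction can be carried out so that the resulting sentence $\sigma_k$ is genuinely \emph{one of} the enumerated sentences (i.e., equals some $\sigma_k$ for an index $k$ that the construction can name), and that the numeral $\underline{k}$ for that very index is correctly interpreted in $L_\alpha$. This requires knowing that the map taking a formula to the index of its self-substituted negation is recursive and that $L_\alpha$ evaluates numerals correctly, both of which hold because $L_\alpha$ for limit $\alpha$ contains all of $\omega$ and is closed under the primitive recursive set functions, so every recursive function on $\omega$ is represented faithfully. Once the coding is pinned down, the contradiction is immediate, so I would keep that verification brief and emphasize the diagonalization.

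Since the lemma is stated as holding for every limit ordinal $\alpha$ uniformly, I would phrase the whole argument so that no cardinality or well-foundedness hypothesis beyond ``$\alpha$ is a limit'' is used; the only features of $L_\alpha$ invoked are that $\omega\subseteq L_\alpha$, that $L_\alpha$ is closed under the arithmetization of syntax, and that satisfaction of a \emph{single fixed} formula $\varphi$ is a genuine relation on $L_\alpha$ (which it is, externally). This matches the intended use of the lemma in the sequel, where $\varphi$ would play the role of a putative definable truth predicate that must be ruled out.
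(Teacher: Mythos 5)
Your proposal is correct and is essentially the paper's own argument: the paper gives no written proof, stating only that the lemma ``follows from Tarski undefinability argument,'' and your diagonalization via the recursive enumeration is exactly that argument. The fixed point works because, for a standard index $e$ of the formula $\chi(x)\equiv\exists y(\mathrm{Graph}_f(x,y)\land\lnot\varphi(y))$, the transitive structure $L_\alpha$ decides $f(e)=k$ correctly (a halting computation is a genuine finite object in $L_\omega\subseteq L_\alpha$, and its verification is $\Delta_0$, hence absolute), which is the only correctness you actually need.

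One caveat on your justification: the claim that $L_\alpha$ is closed under the primitive recursive set functions for every limit $\alpha$ is false --- e.g., ordinal addition is primitive recursive, yet $(\omega+1)+(\omega+1)\notin L_{\omega\cdot 2}$ --- so you should not lean on it. Fortunately your proof never needs it: all that is required is that $V_\omega=L_\omega\subseteq L_\alpha$, that $L_\alpha$ is transitive with standard $\omega$, and that statements of the form ``$f(n)=m$'' for recursive $f$ and standard $n,m$ are absolute because both they and their failures are witnessed by finite computation records lying in $V_\omega$. With that repair, the bookkeeping you outline (recursively passing between G\"odel codes and indices in the fixed enumeration, and interpreting numerals as the definable finite von Neumann ordinals) goes through verbatim, and the contradiction $L_\alpha\vDash\sigma_k\iff L_\alpha\vDash\varphi(\underline{k})$ versus $L_\alpha\vDash\sigma_k\iff L_\alpha\vDash\lnot\varphi(\underline{k})$ is immediate.
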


\begin{lemma} \label{Lemma: Cofinally many ordinals with different theories}
    The set $A = \{\Th(L_\alpha)\mid \alpha<\omega_1^L \land L_\alpha\vDash  \ATR_0^\set\}$ is uncountable in $L$.
\end{lemma}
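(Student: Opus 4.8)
The plan is to argue by contradiction: assume $A$ is countable and show that some level $L_\rho$ would then parameter-free define an ordinal that cannot belong to it. Write $R=\{\alpha<\omega_1\mid L_\alpha\vDash\ATR_0^\set\}$; since we assume $V=L$ and (as noted just above) every locally countable limit of admissibles satisfies $\ATR_0^\set$, the set $R$ is cofinal in $\omega_1$ and closed under countable suprema, so it has cofinally many members that are simultaneously admissible and limit points of $R$. The first step is to record that $\hat\sigma(\alpha)$ depends only on $\Th(L_\alpha)$: by \autoref{Lemma: Characterizing full stable ordinals} the parameter-free definable elements of $L_\alpha$ form exactly the transitive set $L_{\hat\sigma(\alpha)}$, and the isomorphism type of this substructure is determined by $\Th(L_\alpha)$ (naming each definable element by a defining formula, its $\in$- and $=$-relations are read off from the theory); since isomorphic transitive sets are equal, $\Th(L_\alpha)=\Th(L_\beta)$ forces $\hat\sigma(\alpha)=\hat\sigma(\beta)$. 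Hence if $A$ were countable, $\{\hat\sigma(\alpha)\mid\alpha\in R\}$ would be a countable set of countable ordinals, bounded by some $\gamma<\omega_1$. As $\hat\sigma$ is monotone (\autoref{Lemma: Stable ordinal monotonicity}) and $R$ is unbounded, a bounded non-decreasing ordinal-valued function on an unbounded subset of $\omega_1$ is eventually constant: there are $\tau^*\le\gamma$ and $\beta^*<\omega_1$ with $\hat\sigma(\alpha)=\tau^*$, i.e.\ $L_{\tau^*}\prec L_\alpha$, for all $\alpha\in R$ with $\alpha\ge\beta^*$; note $\tau^*$ is then a fixed point of $\hat\sigma$.

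Next I would choose $\rho\in R$ admissible, a limit point of $R$, with $\rho>\max(\beta^*,\tau^*)$. In the admissible $L_\rho$ the satisfaction relation for each set $L_\xi$ with $\xi<\rho$ is definable, so the partial function $\xi\mapsto\hat\sigma(\xi)$ (``the least $\sigma$ with $L_\sigma\prec L_\xi$'') is definable over $L_\rho$, and $L_\rho\vDash\KP\ell$ proves the monotonicity of $\hat\sigma$. The crux is to show $\tau^*$ is parameter-free definable in $L_\rho$ via
\[
\theta(\sigma)\ :\equiv\ \text{``}\{\xi \mid \hat\sigma(\xi)=\sigma\}\text{ is cofinal in }\Ord\text{''}.
\]
Because $\rho$ is a limit point of $R$ above $\beta^*$, the set $R\cap(\beta^*,\rho)$ is cofinal in $\rho$ and every $\xi$ in it has $\hat\sigma(\xi)=\tau^*$, so $L_\rho\vDash\theta(\tau^*)$. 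For leastness, if some $\sigma_0<\tau^*$ also satisfied $\theta$, then monotonicity (internal to $L_\rho$) would give $\hat\sigma(\xi)\ge\tau^*>\sigma_0$ for all $\xi$ past the first one with $\hat\sigma(\xi)=\tau^*$, so $\{\xi\mid\hat\sigma(\xi)=\sigma_0\}$ would be bounded, not cofinal. Thus $\theta$ defines $\tau^*$ over $L_\rho$ without parameters.

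Finally, since $\tau^*$ is parameter-free definable in $L_\rho$ and $\rho\ge\beta^*$ gives $\hat\sigma(\rho)=\tau^*$, \autoref{Lemma: Characterizing full stable ordinals} yields $\tau^*\in L_{\hat\sigma(\rho)}=L_{\tau^*}$, which is absurd because $\tau^*=\Ord\cap L_{\tau^*}\notin L_{\tau^*}$. This contradiction shows $A$ is uncountable.

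I expect the parameter-free definability of $\tau^*$ to be the main obstacle. The naive attempt to define $\tau^*$ directly as ``the least $\sigma$ with $L_\sigma\prec L_\rho$'' fails, since $L_\sigma\prec L_\rho$ refers to truth in the ambient universe $L_\rho$, which is undefinable by \autoref{Lemma: No definable truth predicate over Lalpha}; and allowing $L_{\tau^*}$ as a parameter would only contradict a parameter-permitting form of Tarski's theorem, which is not what \autoref{Lemma: No definable truth predicate over Lalpha} provides. Routing the definition through the cofinal fibers of $\xi\mapsto\hat\sigma(\xi)$, with leastness supplied by monotonicity, is precisely the device that produces a \emph{genuinely} parameter-free definition and hence a contradiction with the parameter-free fact that $L_{\tau^*}$ omits $\tau^*$.
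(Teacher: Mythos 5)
Your argument is correct, but it takes a genuinely different route from the paper's. The paper runs a Tarski diagonal at the single level $L_{\omega_1^L}$: if $A$ were countable, then by $V=L$ a real coding $A$ lies in $L_{\omega_1^L}$, its $<_L$-least code is parameter-free definable there, and since $\Th(L_{\omega_1^L})$ itself belongs to $A$ (reflect to a countable $L_\alpha\prec L_{\omega_1^L}$), one slice of that code is a definable truth predicate for $L_{\omega_1^L}$, contradicting \autoref{Lemma: No definable truth predicate over Lalpha}. You instead make the stable-ordinal apparatus carry the contradiction: $\hat\sigma(\alpha)$ is an invariant of $\Th(L_\alpha)$ (via \autoref{Lemma: Characterizing full stable ordinals} plus rigidity of transitive sets --- a nice observation the paper does not use), so countability of $A$, \autoref{Lemma: Stable ordinal monotonicity}, and regularity of $\omega_1$ force $\hat\sigma$ to be eventually constant, say with value $\tau^*$, on $R$; then over a suitable admissible $\rho\in R$ the ordinal $\tau^*$ is parameter-free definable as the least $\sigma$ whose $\hat\sigma$-fiber is cofinal, contradicting that the parameter-free definable elements of $L_\rho$ are exactly $L_{\hat\sigma(\rho)}=L_{\tau^*}$. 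As for trade-offs: the paper's proof is shorter and needs only undefinability of truth, but it uses $V=L$ essentially to make $A$ itself constructible and definable over $L_{\omega_1^L}$; your proof never needs $A$ to be in $L$ at all (only its countability in $V$ and the unboundedness of $R$ in $\omega_1$ enter), and it reuses machinery the paper proves anyway for its second cofinality argument. Two points to tighten. First, closure of $R$ under countable suprema gives cofinally many limit points of $R$ inside $R$, but it does not by itself yield \emph{admissible} ones; the standard fix is to take $\rho$ a limit point of the club $\{\alpha<\omega_1 : L_\alpha\prec L_{\omega_1}\}$, whose members are admissible (they model $\mathsf{ZF}^-$ since $L_{\omega_1}=H_{\omega_1}$ does) and lie in $R$. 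Second, you should state explicitly that satisfaction for set structures is $\Delta_1$ over the admissible $L_\rho$, so the internally computed $\hat\sigma$ agrees with the true $\hat\sigma$ below $\rho$; this absoluteness is what converts your external eventual-constancy fact into the truth of $\theta(\tau^*)$ and its minimality inside $L_\rho$.
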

\begin{proof}
    Let us work in $L$ throughout the whole proof.
    $A$ is clearly not empty: If $L_\alpha\prec L_{\omega_1^L}$ and $\alpha$ is countable, then $\Th(L_\alpha)\in A$. Now suppose that $A$ is countable, and let $T\subseteq \omega\times\omega$ be the $<_L$-least set such that $A=\{T"\{n\}\mid n<\omega\}$. We also have $T\in L_{\omega_1^L}$ since $T\subseteq\omega\times\omega$, and so $A\in L_{\omega_1}$. Then $T$ is definable by the formula
    \begin{equation*}
        (T \text{ is the $<_L$-least set such that $A = \{T"\{n\}\mid n<\omega\}$})^{L_{\omega_1^L}},
    \end{equation*}
    so in particular, $T$ is definable over $L_{\omega_1^L}$ with a definable parameter $A\in L_{\omega_1^L}$ ($A$ is definable using the equation in the statement of the lemma), so $T$ is parameter-free definable in $L_{\omega_1^L}$.
    
    Fix a recursive enumeration $\{\sigma_m\mid m<\omega\}$ of sentences in the language $\{\in\}$, and pick $n_0$ such that $T"\{n_0\}=\Th(L_{\omega_1^L})$. (Such $n_0$ exists since we have a countable $\alpha$ such that $L_\alpha\prec L_{\omega_1^L}$, and such $\alpha$ satisfies $L_\alpha\vDash \ATR_0^\set$.)
    Then we have
    \begin{equation*}
        L_{\omega_1^L}\vDash \sigma_m \iff L_{\omega_1^L}\vDash [\sigma_m\in T"\{n_0\}].
    \end{equation*}
    $T$ is definable over $L_{\omega_1^L}$, so we have a contradiction with \autoref{Lemma: No definable truth predicate over Lalpha}. 
\end{proof}

\begin{proposition} \label{Proposition: Beta rank for ATR0 is uncountable under V=L}
    The supremum of $\beta$-rank of theories extending $\PRS + \mathsf{Beta} + (V=L)$ is exactly $\omega_1^L$.
\end{proposition}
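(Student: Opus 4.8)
The plan is to prove cofinality of the $\beta$-ranks in $\omega_1$; that every countable ordinal is then attained follows from the no-gaps observation used in the $\mathsf{ZFC}$ case together with the theorem that every $\beta$-rank is countable. I work under $V=L$, so $\omega_1=\omega_1^L$. For $\alpha<\omega_1$ with $L_\alpha\vDash\ATR_0^\set$ write $T_\alpha:=\Th(L_\alpha)$, a complete theory extending $\ATR_0^\set$ with transitive model $L_\alpha$. I call $\sigma<\omega_1$ \emph{good} if $L_\sigma\vDash\ATR_0^\set$ and $\sigma=\hat\sigma(\sigma)$, i.e.\ $L_\sigma$ is pointwise definable (\autoref{Lemma: Characterizing full stable ordinals}). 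By that lemma $\hat\sigma(\alpha)$ is good and $L_{\hat\sigma(\alpha)}\prec L_\alpha$, so $T_\alpha=T_{\hat\sigma(\alpha)}$; thus every $T_\alpha$ equals $T_\sigma$ for a good $\sigma$. Moreover two good ordinals with the same theory yield pointwise-definable models of the same complete theory, hence isomorphic, hence (being transitive) equal; so $\sigma\mapsto T_\sigma$ is a bijection from the good ordinals onto the set of theories $\{T_\alpha\}$.

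The heart of the argument is a lower bound: for good $\sigma$ and any $\beta<\sigma$ with $L_\beta\vDash\ATR_0^\set$, I claim $T_\beta<_\beta T_\sigma$. Since $L_\sigma$ is pointwise definable, fix a formula $\delta$ defining $\beta$ in $L_\sigma$. For each set-theoretic sentence $\psi$, let $\chi_\psi$ be the sentence ``the ordinal $b$ defined by $\delta$ satisfies $L_b\vDash\psi$''; then $\chi_\psi\in T_\sigma$ iff $L_\beta\vDash\psi$. Given any transitive $M\vDash T_\sigma$ (collapse first if $M$ is merely well-founded), $\delta$ defines an ordinal $b<\Ord^M$, and by absoluteness of the constructible hierarchy and of satisfaction for set-sized structures over models of $\ATR_0^\set$, the genuine level $L_b\in M$ satisfies $L_b\vDash\psi\iff M\vDash\chi_\psi\iff L_\beta\vDash\psi$ for every $\psi$. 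Hence $L_b\equiv L_\beta$, so $L_b$ is a transitive member of $M$ modeling $T_\beta$. This establishes $T_\beta<_\beta T_\sigma$. Since $T_\beta=T_{\hat\sigma(\beta)}$ with $\hat\sigma(\beta)<\sigma$ good, and conversely every good $\sigma'<\sigma$ arises this way (take $\beta=\sigma'$), I obtain $\betarank_{\ATR_0^\set}(T_\sigma)\ge\sup\{\betarank_{\ATR_0^\set}(T_{\sigma'})+1:\sigma'<\sigma\text{ good}\}$. Transfinite induction along the good ordinals then gives $\betarank_{\ATR_0^\set}(T_\sigma)\ge$ the order type of the good ordinals below $\sigma$.

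To finish, \autoref{Lemma: Cofinally many ordinals with different theories} supplies uncountably many distinct theories $T_\alpha$, hence by the bijection above uncountably many good ordinals; an uncountable subset of $\omega_1$ is cofinal, so the good ordinals have order type $\omega_1$. Consequently $\betarank_{\ATR_0^\set}(T_\sigma)$ is cofinal in $\omega_1$ as $\sigma$ ranges over the good ordinals. Together with all ranks being countable and the no-gaps property, this shows the $\beta$-ranks of theories extending $\ATR_0^\set$ are exactly the countable ordinals.

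The main obstacle is the absoluteness step in the second paragraph: one must verify that the ordinal internally defined by $\delta$ in a transitive model $M\vDash T_\sigma$ of a priori nonstandard height yields the \emph{genuine} level $L_b$ carrying its \emph{genuine} theory, so that $L_b$ is an honest transitive member of $M$ modeling $T_\beta$. This relies on $\ATR_0^\set$ correctly reconstructing the constructible hierarchy (via $\Beta$ and transfinite recursion) and on its having satisfaction predicates for set-sized structures, which make both the $L$-construction and the computation of $\Th(L_b)$ absolute between $M$ and $V$.
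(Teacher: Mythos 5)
Your proof is correct, but it takes a genuinely different route from the paper's own argument for this proposition. The paper works with \emph{decent} ordinals: those $\alpha$ with $\Th(L_\alpha)$ in the uncountable set of \autoref{Lemma: Cofinally many ordinals with different theories} that are \emph{minimal} among all $\gamma$ with $L_\gamma\equiv L_\alpha$. Its key step is a height argument: a well-founded model $M$ of $\Th(L_{\gamma_\xi})$ collapses (by absoluteness of the $L$-construction, since the theory contains $V=L$) to some $L_\delta$ with $L_\delta \equiv L_{\gamma_\xi}$, decency forces $\delta\ge\gamma_\xi$, and hence $L_{\gamma_\eta}\in L_\delta$ literally, for each $\eta<\xi$. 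You instead work with pointwise definable levels ($\sigma=\hat\sigma(\sigma)$) and replace the height argument by a definability transfer: the scheme $\forall x(\delta(x)\to\psi^{L_x})$ lies in $T_\sigma$, so any transitive model of $T_\sigma$ contains \emph{some} level $L_b$ elementarily equivalent to $L_{\sigma'}$, with no lower bound on the ambient model's height ever needed (and indeed $b$ need not equal $\sigma'$). This is essentially the sentence scheme $\theta_\psi$ that the paper deploys in its second proof (\autoref{Proposition: Beta rank of theories extending ATR0 set}), but used positively rather than to derive a contradiction from a descending sequence of ordinals; likewise, you obtain cofinality of the good ordinals from \autoref{Lemma: Cofinally many ordinals with different theories} via the bijection through $\hat\sigma$ and Mostowski rigidity, where the paper obtains cofinality of the decent ordinals from the same lemma by taking minimal representatives. (In fact the two classes of ordinals coincide, though neither proof needs this.) What each buys: the paper's decency argument is shorter and pins down the encoded model to be exactly $L_{\gamma_\eta}$; your argument avoids any claim about the model's height, and its definability machinery is precisely what the paper's second proof exploits to effectivize the result (e.g.\ for ranks of extensions of $\KPi$). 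Two small polishing points: formalize your $\chi_\psi$ as the relativization scheme $\forall x(\delta(x)\to\psi^{L_x})$, as the paper does, so that no internal satisfaction predicate is required; and note that the absoluteness facts you flag at the end (correct computation of $L_b$ and of relativized truth in transitive models of $\ATR_0^\set$, via primitive recursive set functions) are exactly the ones the paper's own proofs also rely on.
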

\begin{proof}
    Let $A$ be the set we defined in Lemma \ref{Lemma: Cofinally many ordinals with different theories}. Say $\alpha$ is \emph{decent} if both (i) $\Th(L_\alpha)\in A$ and (ii) for all $\gamma$, if $L_\alpha\equiv L_\gamma$, then $\alpha\le \gamma$.
    
    First, we make an observation: If every $\beta$-model of $T$ encodes a $\beta$-model of $U$, then every well-founded model of the set interpretation of $T$ encodes a well-founded model of the set interpretation of $U$.
    We let $\{\gamma_\xi \mid \xi<\omega^L_1\}$ be an enumeration of all of the decent ordinals. These are cofinal in $\omega_1$ by the previous Lemma. Therefore it suffices to show that whenever $\eta<\xi$, $\Th(L_{\gamma_\eta})<_{\beta} \Th(L_{\gamma_\xi})$. Note that these theories will be distinguished by the definition of a decent ordinal.

    So consider some well-founded model $M$ for $\Th(L_{\gamma_\xi})$. We take the transitive collapse $L_\delta$ of $M$. We infer that $\delta\geq \gamma_\xi$ since $\gamma_\xi$ is decent.
    Now let $\eta<\xi$. We infer that $L_{\gamma_\eta} \in L_\delta$. Obviously, $L_{\gamma_\eta} \vDash \Th(L_{\gamma_\eta}).$ We take the uncollapse map we see that $M$ contains a well-founded of $\Th(L_{\gamma_\eta})$.
    This shows the desired inequality, namely, $\Th(L_{\gamma_\eta})<_{\beta} \Th(L_{\gamma_\xi})$.
\end{proof}

\subsection{The second proof for the cofinality of rank for theories extending \texorpdfstring{$(V=L)$}{V=L}}
In this subsection, we give a second proof for that the supremum of the $\beta$-rank of theories extending $\PRS + \mathsf{Beta} + (V=L)$ is $\omega_1^L$. Although this proof is more convoluted, its effectivization will give a $\beta$-rank of extensions of $\KPi$.

\begin{lemma}[$\KPi$]
    The class of all locally countable limit admissibles is unbounded below $\omega_1^L$. If the axiom of countability holds and $V=L$, then the class is unbounded in $\Ord$.
\end{lemma}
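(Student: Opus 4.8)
The plan is to produce, above any prescribed ordinal, an ordinal $\alpha$ that is simultaneously (i) a limit of admissibles and (ii) \emph{locally countable}, i.e.\ with $L_\alpha$ satisfying the axiom of countability; by the discussion preceding the lemma these are exactly the two requirements (and together they give $L_\alpha\vDash\ATR_0^\set$). The engine for (ii) is the observation that local countability is a closure condition. Define, for an ordinal $\gamma$ such that $L_\gamma$ is countable, $f(\gamma)$ to be the least $\delta$ for which $L_\delta$ contains a surjection from $\omega$ onto $L_\gamma$. A routine absoluteness check shows that a limit ordinal $\alpha$ is locally countable precisely when it is closed under $f$: for $\gamma<\alpha$, a surjection onto $L_\gamma$ witnessing countability \emph{inside} $L_\alpha$ appears below $\alpha$ iff $f(\gamma)<\alpha$. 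Thus it suffices to find, above any given ordinal, a limit of admissibles closed under $f$.

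For the first claim I work below $\omega_1^L$, where every $L_\gamma$ is countable in $L$, so $f$ is a total, $\Sigma_1$-definable function $\omega_1^L\to\omega_1^L$ computed inside $L$. Fix $\beta<\omega_1^L$ and build, inside $L$, an increasing sequence $\beta=\beta_0<\beta_1<\cdots$: given $\beta_n$, use $\Sigma_1$-collection in $\KP$ to form $\eta_n=\sup\{f(\gamma)\mid \gamma\le\beta_n\}$ and let $\beta_{n+1}$ be the least admissible above $\max(\beta_n,\eta_n)$, which exists since $\KPi$ makes the admissibles unbounded. Put $\alpha=\sup_n\beta_n$. Then $\alpha>\beta$, $\alpha$ is a limit of the admissibles $\beta_{n+1}$, and $\alpha$ is closed under $f$ because any $\gamma<\alpha$ lies below some $\beta_n$, whence $f(\gamma)\le\eta_n<\beta_{n+1}<\alpha$. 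Since $\omega_1^L$ is regular in $L$ and the whole construction takes place inside $L$, the $\omega$-supremum $\alpha$ stays below $\omega_1^L$. Hence $\alpha$ is a locally countable limit admissible with $\beta<\alpha<\omega_1^L$. (Alternatively one may take a countable elementary submodel of $L_{\omega_1^L}$ containing $\beta$ and invoke condensation, using that $L_{\omega_1^L}\vDash\mathsf{KP}\ell$ together with the axiom of countability; condensation is the more robust option when $\omega_1^L$ has been collapsed in $V$.)

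For the second claim, assume the axiom of countability and $V=L$. Now every set---hence every $L_\gamma$---is countable, so $f$ is total and $\Sigma_1$-definable on all of $\Ord$, and $\Ord$ itself plays the role of $\omega_1^L$. Running exactly the same recursion, now in $V=L$, produces for each $\beta$ an ordinal $\alpha>\beta$ that is a limit of admissibles and closed under $f$; the only difference is that the $\omega$-supremum is no longer bounded, which is precisely what is wanted. The formation of $\langle\beta_n\rangle$ and of $\alpha=\sup_n\beta_n$ is justified by $\Sigma$-recursion and $\Sigma_1$-collection in $\KPi$, and unboundedness of the admissibles is again guaranteed by $\KPi$. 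Thus the locally countable limit admissibles are unbounded in $\Ord$.

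The main obstacle is the bookkeeping that keeps both closure conditions---under $f$ and under ``next admissible''---satisfied at the limit while controlling the height: in the first claim one must verify that the recursion can be executed inside $L$, so that regularity of $\omega_1^L$ in $L$ forces $\alpha<\omega_1^L$, and in both claims one must check that the required suprema and the $\omega$-recursion are available in $\KPi$. The conceptual heart, however, is the equivalence between local countability of $\alpha$ and closure of $\alpha$ under the witness function $f$, which reduces the whole statement to an elementary interleaving argument.
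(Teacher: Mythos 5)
Your proposal is correct and follows essentially the same route as the paper: the paper's proof is exactly this interleaving $\omega$-recursion, alternating ``next admissible'' with ``level where a countability witness appears'' and taking the supremum. The only difference is that you spell out the details the paper leaves implicit (the witness function $f$, its equivalence with local countability, the use of $\Sigma_1$-collection and $\Sigma$-recursion in $\KPi$, and the regularity of $\omega_1^L$ in $L$ to keep the supremum countable), and you explicitly treat the second claim, which the paper handles by the same recursion run on all of $\Ord$.
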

\begin{proof}
    Let $\alpha_0<\omega_1^L$ be any ordinal. Now let us recursively choose the least countable admissible ordinal $\alpha_{n+1}>\alpha_n$ such that $L_{\alpha_{n+1}}$ contains an onto map from $\omega$ to $\alpha_n$. Then consider $\alpha_\omega = \sup_n\alpha_n$.
\end{proof}

\begin{lemma}[$\KPi$] \label{Lemma: Collection of all L-pointwise definable ordinal is cofinal}
    Let $A$ be the collection of all ordinals $\alpha$ such that $\alpha$ is a limit of admissibles, locally countable, and every element of $L_\alpha$ is definable in $L_\alpha$. Then $A$ is unbounded in $\omega_1^L$.
    If we assume the axiom of countability in the background theory, then $A$ is a proper class.
\end{lemma}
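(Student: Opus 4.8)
The plan is to show that the members of $A$ are, up to cofinality, exactly the ordinals $\hat\sigma(\gamma)$ furnished by \autoref{Lemma: Characterizing full stable ordinals}, and then to prove these are unbounded by a self-reference (Tarski-style) argument of the same flavor as the proof of \autoref{Lemma: Cofinally many ordinals with different theories}.

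First I would record the production step. For a limit ordinal $\gamma$, \autoref{Lemma: Characterizing full stable ordinals} gives $L_{\hat\sigma(\gamma)} = \{x\in L_\gamma\mid x\text{ parameter-free definable in }L_\gamma\}$ together with $L_{\hat\sigma(\gamma)}\prec L_\gamma$. Two things follow. First, $L_{\hat\sigma(\gamma)}$ is pointwise definable: any $x\in L_{\hat\sigma(\gamma)}$ is defined in $L_\gamma$ by a formula with a unique witness, and by elementarity that same formula defines $x$ inside $L_{\hat\sigma(\gamma)}$. Second, the properties ``locally countable'' and ``limit of admissibles'' are expressed by fixed formulas and so transfer across $L_{\hat\sigma(\gamma)}\prec L_\gamma$; hence $\hat\sigma(\gamma)\in A$ whenever $\gamma$ is a locally countable limit admissible. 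Finally, if some ordinal $\xi<\gamma$ is parameter-free definable in such an $L_\gamma$, then $\xi\in L_{\hat\sigma(\gamma)}$, so $\xi<\hat\sigma(\gamma)$. Thus it suffices to show that $\rho:=\sup\{\hat\sigma(\gamma)\mid \gamma\text{ locally countable limit admissible}\}$ equals $\omega_1^L$: this already forces $A$ unbounded.

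So I would assume toward a contradiction that $\rho<\omega_1^L$ and argue inside $L_{\omega_1^L}$. Since $\omega_1^L$ is admissible (indeed a limit of admissibles), $L_{\omega_1^L}$ carries a satisfaction predicate for each of its set-sized levels $L_\gamma$, whence the map $\gamma\mapsto\hat\sigma(\gamma)$ and the predicate ``locally countable limit admissible'' are definable over $L_{\omega_1^L}$; consequently $\rho$, being a set by our assumption, is parameter-free definable over $L_{\omega_1^L}$. On the other hand, the parameter-free definable elements of $L_{\omega_1^L}$ form the countable set $L_{\hat\sigma(\omega_1^L)}$ (countably many formulas), so $\hat\sigma(\omega_1^L)<\omega_1^L$; moreover $\hat\sigma(\omega_1^L)$ is itself a locally countable limit admissible and a fixed point of $\hat\sigma$ (nothing smaller can be elementary in a pointwise definable $L_{\hat\sigma(\omega_1^L)}$), so $\hat\sigma(\omega_1^L)=\hat\sigma(\hat\sigma(\omega_1^L))\le\rho$. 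But applying \autoref{Lemma: Characterizing full stable ordinals} at $\alpha=\omega_1^L$ places the definable ordinal $\rho$ in $L_{\hat\sigma(\omega_1^L)}$, giving $\rho<\hat\sigma(\omega_1^L)\le\rho$, a contradiction. Hence $\rho=\omega_1^L$ and $A$ is unbounded in $\omega_1^L$.

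For the second clause I would run the \emph{identical} argument with the proper-class model $V$ in the role of $L_{\omega_1^L}$: under the axiom of countability together with $V=L$ every ordinal is countable and $\Ord$ is the relevant supremum, so assuming $\rho$ is a set yields, as above, that $\rho$ is parameter-free definable over $V$ and that $\hat\sigma(\gamma)\le\rho$ for all locally countable limit admissible $\gamma$ (which form a proper class by the preceding lemma). The one step requiring care --- and the step I expect to be the main obstacle --- is the proper-class analog of \autoref{Lemma: Characterizing full stable ordinals}: one must see that the ordinal $\rho$, definable now over all of $V$, is already parameter-free definable in some \emph{set} $L_\gamma$ with $\gamma$ a locally countable limit admissible above $\rho$, so that $\rho\in L_{\hat\sigma(\gamma)}$ and the contradiction $\rho<\hat\sigma(\gamma)\le\rho$ reappears. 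This is supplied by the reflection principle for $L$, reflecting the fixed defining formula (and the formulas expressing local countability and the density of admissibles) into a set level; equivalently, if $V$ is not pointwise definable one reflects through the least $\tau$ with $L_\tau\prec V$, and if $V$ is pointwise definable each ordinal is outright definable over $V$ and one reflects its definition downward. The delicacy is entirely that this reflection/characterization must be carried out for the proper-class model within the weak base theory $\KPi$; no new idea beyond the $\omega_1^L$ case is needed.
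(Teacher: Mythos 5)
Your production step---$\hat{\sigma}(\gamma)\in A$ for every locally countable limit admissible $\gamma$, together with the fixed-point observation that a pointwise definable $L_{\hat{\sigma}(\gamma)}$ admits no proper elementary substructure---is exactly the key observation in the paper's proof, and your diagonalization is correct mathematics \emph{provided the structure you diagonalize over is a set}. That proviso is where the proposal has a genuine gap, and it affects both clauses, not only the second. The background theory is $\KPi$, and $\KPi$ does not prove that $\omega_1^L$ is a set: there are models of $\KPi$ (e.g.\ $L_\alpha$ with $\alpha$ the least recursively inaccessible ordinal) in which every set is countable, so that ``$L_{\omega_1^L}$'' is the entire universe. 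In that situation even the first clause already requires your proper-class argument, and the patch you offer for it is unavailable in $\KPi$: Levy reflection at the relevant complexity (your defining formula for $\rho$ is at least $\Pi_2$ over $L$ because of the embedded satisfaction predicates) is not provable from $\Sigma$-Collection, and ``the least $\tau$ with $L_\tau\prec V$'' is not even a first-order notion, nor is the existence of such a $\tau$ a theorem of $\mathsf{ZF}$, let alone $\KPi$. So the closing claim that ``no new idea beyond the $\omega_1^L$ case is needed'' is precisely where the proof breaks down: the missing idea is the actual content of the paper's argument.

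What the paper does instead is a bootstrap that manufactures the reflection it needs \emph{inside a set}. Assuming $A$ is bounded by $\lambda$, every locally countable limit admissible $\gamma>\lambda$ (these exist by the preceding lemma) has $\hat{\sigma}(\gamma)\in A$, hence $\hat{\sigma}(\gamma)\le\lambda<\gamma$, so $L_{\hat{\sigma}(\gamma)}\prec L_\gamma$ is a \emph{proper} elementary substructure; any proper $L_\sigma\prec L_\alpha$ satisfies full Collection (witnessed by $b=L_\sigma$ inside $L_\alpha$), so $L_{\hat{\sigma}(\gamma)}$, and with it $L_\gamma$, models $\mathsf{ZF}^-$. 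Only then is Levy reflection invoked---inside the \emph{least} such $L_\gamma$, where it is legitimate because $L_\gamma\vDash\mathsf{ZF}^-$---yielding a smaller locally countable limit admissible above $\lambda$ and contradicting minimality. Alternatively, your own diagonalization can be saved by localizing rather than globalizing it: for an arbitrary locally countable limit admissible $\gamma>\lambda$, set $\rho_\gamma=\sup\{\hat{\sigma}(\gamma')\mid\gamma'<\gamma,\ \gamma'\text{ locally countable limit admissible}\}$; then $\rho_\gamma\le\lambda$, so $\rho_\gamma$ is parameter-free definable over the \emph{set} $L_\gamma$ and hence lies in $L_{\hat{\sigma}(\gamma)}$ by \autoref{Lemma: Characterizing full stable ordinals}, while $\hat{\sigma}(\gamma)$ is itself a locally countable limit admissible below $\gamma$ and a fixed point of $\hat{\sigma}$, giving $\hat{\sigma}(\gamma)\le\rho_\gamma<\hat{\sigma}(\gamma)$. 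This repaired version needs no proper-class truth predicate and treats both clauses uniformly; but as written, the proposal does not reach it.
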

\begin{proof}
    $A$ is not empty since $\omega_\omega^\mathsf{CK}\in A$: In fact, each $\omega_n^\mathsf{CK}$ is $\Sigma_1$-definable in $L_{\omega_\omega^\mathsf{CK}}$, and every ordinal below $\omega_n^\mathsf{CK}$ is $\Sigma_1$-definable in $L_{\omega_n^\mathsf{CK}}$. The conclusion follows since every element of $L_{\omega_\omega^\mathsf{CK}}$ is ordinal definable in $L_{\omega_\omega^\mathsf{CK}}$.
    
    Now suppose that $A$ is bounded, and let $\lambda$ be an upper bound for $A$. We claim that if $\gamma>\lambda$ is a limit of admissibles, then $L_\gamma$ is a model of $\mathsf{ZF}^-$, which will yield the contradiction: Because then we would be able apply Levy reflection in the least such limit of admissibles $L_\gamma$ to obtain even smaller $L_{\gamma'}\in L_\gamma$ that is also a limit of admissibles and contains $\lambda$, contradicting the minimality of the choice of $\gamma$.

    Let us follow the notation in \autoref{Lemma: Characterizing full stable ordinals}. First, observe that $\hat{\sigma}(\alpha)\in A$ if $\alpha$ is limit: If $a\in L_{\hat{\sigma}(\alpha)}$, then $a$ is defined by some formula $\phi(x)$ in $L_\alpha$ by \autoref{Lemma: Characterizing full stable ordinals}.
    Since the statement `$a$ is defined by $\phi(x)$' is first-order expressible in $L_\alpha$ with a parameter in $L_{\hat{\sigma}(\alpha)} \prec L_\alpha$, the same statement holds in $L_{\hat{\sigma}(\alpha)}$. Hence $\phi(x)$ defines $a$ in $L_{\hat{\sigma}(\alpha)}$, which means every element of $L_{\hat{\sigma}(\alpha)}$ is parameter-free definable in $L_{\hat{\sigma}(\alpha)}$. 

    Then we claim that if $L_\sigma\prec L_\alpha$ for $\sigma<\alpha$, then $L_\sigma$ satisfies (first-order) Full Collection: Suppose that we have 
    \begin{equation*}
        L_\sigma \vDash \forall x\in a \exists y \phi(x,y,p)
    \end{equation*}
    for some $a,p\in L_\sigma$. Then we have
    \begin{equation*}
        L_\alpha \vDash \exists b \forall x\in a \exists y\in b \phi(x,y,p)
    \end{equation*}
    witnessed by $b=L_\sigma$. By elementarity, we have $L_\sigma\vDash \exists b \forall x\in a \exists y\in b \phi(x,y,p)$, as desired.

    Hence if $A$ has an upper bound $\lambda$, then for every locally countable limit admissible $\gamma>\lambda$, we have $\hat{\sigma}(\gamma)\le\lambda < \gamma$ and $L_{\hat{\sigma}(\gamma)}\prec L_\gamma$, so $L_{\hat{\sigma}(\gamma)}$ satisfies $\mathsf{ZF}^-$. Hence $L_\gamma \vDash \mathsf{ZF}^-$ too. 
\end{proof}

\begin{lemma}[$\PRS$] \label{Lemma: L-pointwise definable ordinals give different theories}
    Let $A$ be the collection defined in \autoref{Lemma: Collection of all L-pointwise definable ordinal is cofinal}. If $\alpha,\beta\in A$, $\alpha\neq\beta$, then $\Th(L_\alpha) \neq \Th(L_\beta)$.
\end{lemma}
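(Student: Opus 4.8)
The plan is to exploit that each $L_\alpha$ with $\alpha\in A$ is \emph{pointwise definable}: the defining feature of $A$ is precisely that every element of $L_\alpha$ is definable in $L_\alpha$ without parameters. For such structures the complete first-order theory determines the structure up to isomorphism, and since the $L_\alpha$ are transitive, isomorphic ones must literally coincide. So I would argue by contraposition: assuming $\alpha\neq\beta$ but $\Th(L_\alpha)=\Th(L_\beta)$, I will build an $\in$-isomorphism $f\colon L_\alpha\to L_\beta$ and conclude $L_\alpha=L_\beta$, whence $\alpha=\beta$, a contradiction.

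Concretely, fix a recursive enumeration of the one-free-variable formulas. For each $a\in L_\alpha$ choose a formula $\phi$ \emph{defining} $a$ in $L_\alpha$, meaning $L_\alpha\vDash\phi(a)$ and $L_\alpha\vDash\exists!x\,\phi(x)$, which is possible by pointwise definability. Set $f(a)$ to be the unique element of $L_\beta$ satisfying $\phi$; this exists because the sentence $\exists!x\,\phi(x)$ lies in $\Th(L_\alpha)=\Th(L_\beta)$. One then checks each property by transferring a single sentence across the common theory: $f$ is \emph{well-defined} since if $\phi,\psi$ both define $a$ then $\exists x(\phi(x)\wedge\psi(x))$ holds in $L_\alpha$, hence in $L_\beta$; $f$ is \emph{injective} and \emph{surjective} by the analogous transfers, the latter using that $L_\beta$ is itself pointwise definable; and $f$ is \emph{$\in$-preserving} because $a\in a'$ in $L_\alpha$ iff $L_\alpha\vDash\exists x\exists y(\phi(x)\wedge\psi(y)\wedge x\in y)$ iff the same sentence holds in $L_\beta$ iff $f(a)\in f(a')$. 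Each clause reduces to moving one first-order sentence between the two theories, so each is immediate once written out.

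Having produced a bijective $\in$-isomorphism $f$ between the transitive sets $L_\alpha$ and $L_\beta$, I finish by $\in$-induction: assuming $f(y)=y$ for all $y\in x$, the fact that $f$ is onto and reflects membership gives $f(x)=\{f(y)\mid y\in x\}=\{y\mid y\in x\}=x$. Hence $f$ is the identity and $L_\alpha=L_\beta$. But if $\alpha\neq\beta$, say $\alpha<\beta$, then $\alpha\in L_\beta\setminus L_\alpha$, so $L_\alpha\neq L_\beta$ — the desired contradiction. Therefore $\Th(L_\alpha)\neq\Th(L_\beta)$.

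The main thing to be careful about is the bookkeeping in the transfer step, namely packaging well-definedness, bijectivity, and $\in$-preservation each as a single first-order sentence shared by the two theories, together with verifying that the whole argument — the satisfaction relation for the set-sized structures $L_\alpha,L_\beta$ and the terminal $\in$-induction — is available in $\PRS$. The underlying mathematical content (pointwise definable $+$ same theory $\Rightarrow$ isomorphic, and isomorphic $+$ transitive $\Rightarrow$ equal) is entirely standard.
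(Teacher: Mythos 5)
Your proof is correct, but it takes a genuinely different route from the paper's. The paper argues by contradiction through Tarski's undefinability of truth: if $\alpha<\beta$ both lie in $A$ and $\Th(L_\alpha)=\Th(L_\beta)$, then pointwise definability of $L_\beta$ makes the ordinal $\alpha$, hence the set $L_\alpha$, hence the theory $\Th(L_\alpha)$ definable over $L_\beta$ (via the definable satisfaction relation for set-sized structures); since that theory equals $\Th(L_\beta)$, one obtains a truth predicate for $L_\beta$ definable over $L_\beta$ itself, contradicting \autoref{Lemma: No definable truth predicate over Lalpha}. That argument is three lines, reuses a lemma the paper has already isolated, and needs pointwise definability only of the \emph{larger} structure $L_\beta$. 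Your proof instead instantiates the standard model-theoretic fact that elementarily equivalent pointwise definable structures are isomorphic, followed by Mostowski rigidity (an $\in$-isomorphism between transitive sets is the identity); it uses pointwise definability of \emph{both} structures and is longer, but it is self-contained: it bypasses the truth-undefinability machinery entirely, needing only the satisfaction relation for set structures (already required just to state the lemma), enough separation to form the map $f$, and foundation to run the closing $\in$-induction, all of which are available in $\PRS$ since satisfaction for set-sized structures is primitive recursive in the structure. Your choice of defining formulas can be made canonical (least in the fixed enumeration), so no choice principle is needed. Both arguments are sound; yours trades the paper's brevity and economy of hypotheses for elementarity and independence from the earlier lemma.
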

\begin{proof}
    Suppose not, let us consider the case $\alpha<\beta$, $\alpha,\beta\in A$ but $\Th(L_\alpha) = \Th(L_\beta)$.
    Then $\alpha$ is definable in $L_\beta$, so $\Th(L_\alpha) = \Th(L_\beta)$ is also definable in $L_\beta$. It contradicts with \autoref{Lemma: No definable truth predicate over Lalpha}.
\end{proof}

\begin{proposition} \label{Proposition: Beta rank of theories extending ATR0 set}
    The supremum of $|T|^{\ATR_0^\set}_\beta$ for every $T\supseteq \ATR_0^\set$ is $\omega_1^L$.
\end{proposition}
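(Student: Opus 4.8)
The plan is to produce, using the set $A$ from \autoref{Lemma: Collection of all L-pointwise definable ordinal is cofinal}, a $<^{\ATR_0^\set}_\beta$-increasing chain of theories of length $\omega_1^L$ and to read the supremum of the ranks off this chain. Recall that $A$ consists of locally countable limits of admissibles $\alpha$ with $L_\alpha$ pointwise definable, that $A$ is unbounded in $\omega_1^L$, that each such $L_\alpha \vDash \ATR_0^\set$, and that by \autoref{Lemma: L-pointwise definable ordinals give different theories} the map $\alpha \mapsto \Th(L_\alpha)$ is injective on $A$. The upper bound is immediate: every theory with a $\beta$-model has countable $\prec_\beta$-rank (our first main theorem), and $V=L$ gives $\omega_1 = \omega_1^L$, so every $\betarank_{\ATR_0^\set}(T) < \omega_1^L$.

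The crucial intermediate step, and the one I expect to be the main obstacle, is a minimality property: for $\beta\in A$, if $L_\delta \equiv L_\beta$ then $\beta \le \delta$. To prove it I would pass to the parameter-free definable hull of $L_\delta$. Here $\delta$ is a limit ordinal (``$\Ord$ has no largest element'' lies in $\Th(L_\beta)$), so by \autoref{Lemma: Characterizing full stable ordinals} the parameter-free definable elements of $L_\delta$ form exactly $L_{\hat{\sigma}(\delta)}$, and by the definition of $\hat{\sigma}(\delta)$ we have $L_{\hat{\sigma}(\delta)} \prec L_\delta$. Then $L_{\hat{\sigma}(\delta)} \equiv L_\delta \equiv L_\beta$, and both $L_{\hat{\sigma}(\delta)}$ and $L_\beta$ are pointwise definable transitive structures with the same complete theory. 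Since two elementarily equivalent pointwise definable transitive structures are equal (match the unique witness of each defining formula in one to the corresponding witness in the other to get an $\in$-isomorphism, then apply Mostowski), we conclude $\hat{\sigma}(\delta) = \beta$, whence $\beta = \hat{\sigma}(\delta) \le \delta$.

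Granting minimality, the rank comparison follows. Fix $\alpha < \beta$ in $A$; I claim $\Th(L_\alpha) <^{\ATR_0^\set}_\beta \Th(L_\beta)$. By \autoref{Lemma: Beta rank and transitive rank} it suffices to work with transitive models, so let $M \vDash \Th(L_\beta)$ be transitive. Since $\Th(L_\beta)$ contains $V=L$ together with $\mathsf{KP}\ell$ (as $\beta$ is a limit of admissibles, $L_\beta \vDash \mathsf{KP}\ell \supseteq \KP$), the transitive model $M$ is of the form $L_\delta$ with $\delta = \Ord \cap M$; moreover $L_\delta \equiv L_\beta$, so $\delta \ge \beta$ by the minimality property. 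As $\alpha < \beta \le \delta$ we have $L_\alpha \in L_\delta = M$, and $L_\alpha$ is a transitive model of $\Th(L_\alpha)$; thus $M$ contains a transitive model of $\Th(L_\alpha)$, giving $\Th(L_\alpha) <^{\ATR_0^\set}_\beta \Th(L_\beta)$ (both theories extend $\ATR_0^\set$ since $L_\alpha, L_\beta \vDash \ATR_0^\set$).

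Finally I would assemble the chain. Enumerate $A$ in increasing order as $\langle \alpha_\xi \mid \xi < \omega_1^L\rangle$; since $A$ is unbounded in the regular cardinal $\omega_1^L$, its order type is $\omega_1^L$. The previous paragraph yields $\Th(L_{\alpha_\eta}) <^{\ATR_0^\set}_\beta \Th(L_{\alpha_\xi})$ whenever $\eta < \xi$, so $\xi \mapsto \betarank_{\ATR_0^\set}(\Th(L_{\alpha_\xi}))$ is strictly increasing and hence cofinal in $\omega_1^L$. Combined with the upper bound, this shows that the supremum of $\betarank_{\ATR_0^\set}(T)$ over $T \supseteq \ATR_0^\set$ is exactly $\omega_1^L$.
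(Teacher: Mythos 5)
Your proof is correct, and its skeleton coincides with the paper's: both use the cofinal set $A$ of locally countable, pointwise definable limits of admissibles from \autoref{Lemma: Collection of all L-pointwise definable ordinal is cofinal}, show that $\alpha<\gamma$ in $A$ implies $\Th(L_\alpha)<^{\ATR_0^\set}_\beta\Th(L_\gamma)$ by proving that any transitive $M\vDash\Th(L_\gamma)$ is some $L_\delta$ with $\delta\ge\gamma$, and read the cofinality of the ranks off the resulting chain. Where you genuinely diverge is in the proof of that minimality step. The paper argues by infinite descent: if $\delta<\gamma$, then $\delta$ is definable in $L_\gamma$ by some formula $\phi$, the sentences $\theta_\psi\equiv\forall x(\phi(x)\to\psi^{L_x})$ all belong to $\Th(L_\gamma)=\Th(L_\delta)$, and iterating the definition inside successive models produces an externally descending sequence $\delta_0>\delta_1>\cdots$ of ordinals, a contradiction. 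You instead pass to the definable hull: $L_{\hat{\sigma}(\delta)}\prec L_\delta$ is pointwise definable by \autoref{Lemma: Characterizing full stable ordinals}, and two elementarily equivalent pointwise definable transitive structures are equal (the formula-matching map is an $\in$-isomorphism, and Mostowski rigidity makes it the identity), so $L_{\hat{\sigma}(\delta)}=L_\gamma$ and $\gamma=\hat{\sigma}(\delta)\le\delta$. Your route avoids the iteration and the relativized sentences $\psi^{L_x}$, and it proves something stronger: every transitive model of $\Th(L_\gamma)$ has definable hull exactly $L_\gamma$, i.e., $L_\gamma$ is the unique minimal transitive model of its theory. This is precisely condition (ii) in the definition of \emph{decent} ordinals in the paper's first proof (\autoref{Proposition: Beta rank for ATR0 is uncountable under V=L}), so your argument simultaneously supplies the verification that decent ordinals are cofinal, which that proof leaves implicit. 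Two cosmetic remarks: the injectivity of $\alpha\mapsto\Th(L_\alpha)$ you cite from \autoref{Lemma: L-pointwise definable ordinals give different theories} is not actually needed, since no theory with a $\beta$-model can satisfy $S<^{T_0}_\beta S$; and your explicit upper-bound paragraph (countable ranks plus $\omega_1=\omega_1^L$ under $V=L$) is required for ``the supremum \emph{is} $\omega_1^L$'' and is tacit in the paper, so including it is an improvement rather than a detour.
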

\begin{proof}
    By \autoref{Lemma: Collection of all L-pointwise definable ordinal is cofinal}, $A$ is cofinal in $\omega_1^L$. Also, $\Th(L_\alpha)$ is an extension of $\ATR_0^\set$.    
    We claim that for $\alpha<\gamma$ such that $\alpha,\gamma\in A$, $\Th(L_\alpha) <_{\beta} \Th(L_\gamma)$.

    Let $M\vDash \Th(L_\gamma)$ be a transitive model. Then $M\vDash \ATR_0^\set + (V=L)$, so $M=L_\delta$ for some $\delta$. We claim that $\delta \ge \gamma$: Suppose the contrary that $\delta<\gamma$ holds. Let $\phi(x)$ be a formula defining $\delta_0 :=\delta$ in $L_\gamma$. Since $\Th(L_\gamma)=\Th(L_{\delta_0})$, the following sentence is in $\Th(L_\gamma)$ for each sentence $\psi\in \Th(L_\gamma)$:
    \begin{equation*}
        \theta_\psi \equiv \forall x (\phi(x) \to \psi^{L_x}).
    \end{equation*}
    Now let $\delta_1 < \delta_0$ be the unique ordinal satisfying $L_{\delta_0}\vDash \phi(\delta_1)$. Since $L_{\delta_0}\vDash \theta_\psi$ for each sentence $\psi\in \Th(L_\gamma)$, $L_{\delta_0}$ and $L_{\delta_1}$ are elementarily equivalent.
    Repeating the same argument, we can derive an infinite decreasing sequence $\delta_0 > \delta_1 > \delta_2 > \cdots$ satisfying $\Th(L_\gamma)=\Th(L_{\delta_0})=\Th(L_{\delta_1})=\cdots$, a contradiction.

    Hence $\delta\ge \gamma$, so $L_\alpha \in M=L_\delta$. Hence $\Th(L_\alpha) <_{\beta} \Th(L_\gamma)$.
\end{proof}

\section{Ranks for effectively definable theories}

In this section, we will determine the upper bound of the $\beta$-ranks of recursively axiomatized theories.
\begin{theorem} \label{Theorem: Main theorem for the beta rank of effective theories}
    Let $T_0\supseteq\PRS+\Beta$ be a recursive theory such that $H_{\omega_1}\vDash T_0$. Then the following are all equal to $\delta^1_2$:
    \begin{enumerate}
        \item The supremum of $|T|_{\beta}^{T_0}$ for finitely axiomatizable $T\supseteq T_0$, if $T_0$ is finitely axiomatizable.
        \item The supremum of $|T|_{\beta}^{T_0}$ for recursively axiomatizable $T\supseteq T_0$,
        \item The supremum of $|T|_{\beta}^{T_0}$ for a $\Sigma^1_2$-singleton $T\supseteq T_0$.
    \end{enumerate}
\end{theorem}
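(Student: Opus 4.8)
The plan is to bound the largest of the three classes from above and the smallest from below. Every finitely axiomatized theory is recursively axiomatized, and every recursively axiomatized theory is a $\Sigma^1_2$-singleton (its set of theorems is r.e., hence a $\Pi^0_2$-singleton, a fortiori a $\Sigma^1_2$-singleton), so the three suprema are non-decreasing in the order $(1),(2),(3)$. Thus it suffices to prove: (i) $\betarank_{T_0}(T)<\delta^1_2$ for every $\Sigma^1_2$-singleton $T\supseteq T_0$ possessing a $\beta$-model; and (ii), assuming $T_0$ is finitely axiomatized, that the ranks of finitely axiomatized $T\supseteq T_0$ are cofinal in $\delta^1_2$. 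Together with the first main theorem (all ranks are countable) and the fact that $\delta^1_2$ is a countable limit ordinal, these yield equality throughout.

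For (i) I would use \autoref{Lemma: Characterizing stable ordinals}: $L_{\delta^1_2}=\{x\in L: x \text{ is } \Sigma_1\text{-definable over } L\}$ and $L_{\delta^1_2}\prec_{\Sigma_1}L$. Write $T$ as the unique real with $\psi(T)$ for some $\psi\in\Sigma^1_2$. By Shoenfield absoluteness $\psi$ is equivalent to a $\Sigma_1$ formula over $L$, so $T$ is $\Sigma_1$-definable over $L$ and hence $T\in L_{\delta^1_2}$. The assertion ``$T$ has a countable transitive model'' is $\Sigma^1_2(T)$, hence $\Sigma_1$ over $L$ with the parameter $T\in L_{\delta^1_2}$; since $T$ has a $\beta$-model this assertion is true, so by $\Sigma_1$-elementarity it reflects to $L_{\delta^1_2}$, producing a transitive $M\vDash T$ with $M\in L_{\delta^1_2}$ and therefore $\Ord^M<\delta^1_2$. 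An easy induction shows $\betarank_{T_0}(N)\le\Ord^N$ for every transitive model $N$ of $T_0$, so \autoref{Lemma: beta rank of a theory is bounded by a beta rank of a model} gives $\betarank_{T_0}(T)\le\betarank_{T_0}(M)\le\Ord^M<\delta^1_2$.

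For (ii) I would effectivize the second proof of \autoref{Proposition: Beta rank of theories extending ATR0 set}. Let $A$ be the class from \autoref{Lemma: Collection of all L-pointwise definable ordinal is cofinal}. The elements of $A$ below $\delta^1_2$ are cofinal in $\delta^1_2$: each such $\gamma$ is $\Sigma_1$-definable over $L$, so $\gamma\in L_{\delta^1_2}$, and since $L_{\delta^1_2}\prec_{\Sigma_1}L$ is itself a pointwise-definable limit of admissibles, cofinally many witnesses occur below it. For $\gamma\in A$ the theory $\Th(L_\gamma)$ is a $\Sigma^1_2$-singleton, and the argument of \autoref{Proposition: Beta rank of theories extending ATR0 set} shows $\Th(L_\alpha)<^{T_0}_\beta\Th(L_\gamma)$ whenever $\alpha<\gamma$ lie in $A$, so $\betarank_{T_0}(\Th(L_\gamma))$ already exhausts case $(3)$ up to $\delta^1_2$. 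To descend to finite theories I would, for cofinally many $\gamma\in A$, replace $\Th(L_\gamma)$ by a single sentence $\sigma_\gamma$ whose least transitive model under $V=L$ is exactly $L_\gamma$ — taking $\sigma_\gamma$ to assert a finitely expressible reflection property of which $\gamma$ is the least witness — and set $F_\gamma:=T_0+(V=L)+\sigma_\gamma$. Any transitive model of $F_\gamma$ is some $L_\delta$ with $\delta\ge\gamma$, hence contains $L_\alpha\vDash F_\alpha$ for each smaller $\alpha\in A$, giving $F_\alpha<^{T_0}_\beta F_\gamma$ and rendering $\betarank_{T_0}(F_\gamma)$ cofinal in $\delta^1_2$.

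The main obstacle is precisely this finitization in (ii): one must show that cofinally many $\gamma<\delta^1_2$ are least witnesses of finitely expressible properties — equivalently, that the least transitive-model-heights of finite extensions $T_0+(V=L)+\sigma$ are cofinal in $\delta^1_2$. This is where the stability $L_{\delta^1_2}\prec_{\Sigma_1}L$ and the pointwise definability of the $\gamma\in A$ must be leveraged: the $\Sigma_1$-definition of $\gamma$ over $L$ has to reflect into a single sentence pinning a minimal model at height $\gamma$, in the manner of the ``decent ordinal'' bookkeeping of \autoref{Proposition: Beta rank of theories extending ATR0 set}, while remaining compatible with the finitely many axioms of $T_0$. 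The upper bound (i), by contrast, is a short reflection argument once the translation from $\Sigma^1_2$ to $\Sigma_1$-over-$L$ is in hand.
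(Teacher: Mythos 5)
Your part (i) is essentially the paper's own upper-bound argument: a $\Sigma^1_2$-singleton is $\Sigma_1$-definable over $L$, hence lies in $L_{\delta^1_2}$ by \autoref{Lemma: Characterizing stable ordinals}, and Shoenfield absoluteness together with $L_{\delta^1_2}\prec_{\Sigma_1}L$ reflects a transitive model into $L_{\delta^1_2}$ (this is \autoref{Lemma: beta satisfiable theory has a model in L delta 1 2}); combined with \autoref{Lemma: beta rank of a theory is bounded by a beta rank of a model} and $\betarank_{T_0}(M)\le M\cap\Ord$, this matches the paper. The genuine gap is in (ii), precisely at the step you yourself call ``the main obstacle'': you never construct the sentences $\sigma_\gamma$, and the construction you gesture at fails under the stated hypotheses. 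Your $F_\gamma=T_0+(V=L)+\sigma_\gamma$ needs transitive models of the form $L_\delta\vDash T_0$, but the theorem only assumes $H_{\omega_1}\vDash T_0$; nothing forces $T_0$ to hold at any constructible level, nor even to be consistent with $V=L$ (take $T_0=\PRS+\Beta+$``$0^\sharp$ exists,'' which holds in $H_{\omega_1}$ if $0^\sharp$ exists but in no $L_\delta$, making every $F_\gamma$ inconsistent). The same defect undermines your route to case (3): for such $T_0$ the theories $\Th(L_\gamma)$ do not extend $T_0$, so they are not even admissible instances of the statement. Thus both of your lower-bound routes break down in general, and the finitization — the heart of the theorem — remains unproved even in the friendly case.

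The paper closes this hole with a different device that you should compare against your sketch. For each $\alpha<\delta^1_2$ it fixes a $\Sigma_1$-definition $\phi_\alpha$ of $\alpha$ over $L$ of \emph{least height}, where $\height(\phi)$ is the least $\xi$ with $L_\xi\vDash\exists x\,\phi(x)$, and forms $T_{\phi_\alpha}$ by adding to $T_0$ just two sentences: ``$L\vDash\exists!x\,\phi_\alpha(x)$'' and ``for each $\zeta<\alpha$ there is a $(\zeta+1)$-increasing $\in$-chain of transitive models of $T_0$ whose bottom model satisfies $(\exists x\,\phi_\alpha(x))^L$.'' This is a finite extension of $T_0$ (settling case (1) directly), and it has transitive models because $H_{\omega_1}$ satisfies it: the chain is built from countable transitive $M\prec H_{\omega_1}$, which model $T_0$ by elementarity — the models in the chain are never required to be levels of $L$; only the internal $L$ of the bottom model must be tall enough. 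Then \autoref{Lemma: height monotonicity} yields $T_{\phi_\alpha}<^{T_0}_\beta T_{\phi_\gamma}$ for $\alpha<\gamma$, whence $\betarank_{T_0}(T_{\phi_\alpha})\ge\alpha$ by induction. Note also that the least-height normalization is not optional: the paper's padding example shows $T_\phi$ is sensitive to the choice of defining formula $\phi$, a subtlety any ``least witness of a finitely expressible property'' approach like yours would equally have to confront. In short: your decomposition of the three suprema and your upper bound are correct, but the cofinality of finitely axiomatized ranks is asserted rather than proved, and the specific repair you propose cannot work as stated because it ties models of $T_0$ to the constructible hierarchy.
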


For an upper bound, let us rely on the rank of transitive models:
\begin{lemma} \label{Lemma: beta satisfiable theory has a model in L delta 1 2}
    Let $T\in L_{\delta^1_2}$ be a theory with a transitive model. Then $T$ has a transitive model in $L_{\delta^1_2}$.
\end{lemma}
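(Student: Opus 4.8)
The plan is to exploit the $\Sigma_1$-stability of $\delta^1_2$, namely $L_{\delta^1_2}\prec_{\Sigma_1}L$ (the Proposition identifying $\delta^1_2$ as the least stable ordinal), together with Shoenfield absoluteness. First I would observe that the property ``$T$ has a transitive model'' is $\Sigma_1$ over $L$ in the parameter $T$: writing it as $\exists M(\text{$M$ is transitive}\wedge M\vDash T)$, the clause $M\vDash T$ unfolds to $\forall\varphi\in T\,(M\vDash\varphi)$, a bounded quantifier applied to the $\Delta_1$ satisfaction relation for set structures, hence $\Delta_1$ in $M$ and $T$; prefixing the quantifier $\exists M$ and the $\Delta_0$ transitivity clause keeps the whole statement $\Sigma_1(T)$.

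The subtlety is that the transitive model guaranteed in $V$ need not lie in $L$, so I cannot reflect directly. To repair this I would first pass to a countable model: by Löwenheim–Skolem take a countable elementary submodel of the given transitive model and apply the Mostowski collapse, producing a countable transitive model of $T$. Since $T\in L_{\delta^1_2}$ and $\delta^1_2<\omega_1$, the set $L_{\delta^1_2}$ is countable and $T$ is coded by a real $t\in L$. The statement ``$T$ has a countable transitive model'' is then $\Sigma^1_2(t)$: it asserts the existence of a well-founded extensional relation on $\omega$ whose Mostowski collapse satisfies every axiom coded in $t$, where well-foundedness contributes the $\Pi^1_1$ part and the satisfaction clause is arithmetic in $t$. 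By Shoenfield absoluteness between $V$ and $L$ this statement holds in $L$ as well, so $L\vDash\exists M(\text{$M$ transitive}\wedge M\vDash T)$.

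Finally I would apply $\Sigma_1$-elementarity $L_{\delta^1_2}\prec_{\Sigma_1}L$ to the $\Sigma_1(T)$ statement above, using $T\in L_{\delta^1_2}$ as the parameter. This yields a witness $M\in L_{\delta^1_2}$ with $L_{\delta^1_2}\vDash$ ``$M$ is transitive and $M\vDash T$''. Since $L_{\delta^1_2}$ is admissible and both transitivity and the satisfaction relation are $\Delta_1$, the object $M$ is a genuine transitive model of $T$ lying in $L_{\delta^1_2}$, as required.

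I expect the main obstacle to be the $V$-to-$L$ transfer: the model handed to us need not be constructible, so the argument cannot transport the given model but must instead recast the existence of a transitive model as a $\Sigma^1_2$ fact about a real coding $T$ and appeal to Shoenfield. A secondary point to verify carefully is the complexity bookkeeping—that the satisfaction relation for set structures is $\Delta_1$ and that the bounded quantifier $\forall\varphi\in T$ does not raise complexity—so that the $\Sigma_1$-reflection step is legitimate.
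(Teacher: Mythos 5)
Your proof is correct, and it rests on the same two pillars as the paper's: Shoenfield absoluteness and the stability $L_{\delta^1_2}\prec_{\Sigma_1}L$. The genuine difference is in how the parameter $T$ is handled. The paper first reflects the $\Sigma_1(T)$ statement ``$T$ has a transitive model'' from $V$ to $H_{\omega_1}$ via $H_{\omega_1}\prec_{\Sigma_1}V$, and then \emph{eliminates} the parameter: since $L_{\delta^1_2}$ consists exactly of the $\Sigma_1$-definable elements of $L$ (\autoref{Lemma: Characterizing stable ordinals}), $T$ has a parameter-free $\Sigma_1$-definition $\theta$ over $L$, and the parameter-free sentence $\exists X[(L\vDash\theta(X))\land\exists M(\text{$M$ is a transitive model of $X$})]$ is the $\Sigma^1_2$ assertion to which Shoenfield is applied; after transfer to $L$ and descent to $L_{\delta^1_2}$ by stability, the witness $X$ is recovered as $T$ itself. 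You instead keep $T$ as a parameter throughout: you pass to a countable transitive model via L\"owenheim--Skolem and Mostowski collapse, observe that $T$, being a set of (codes of) sentences, is itself a real lying in $L$, and invoke the parametrized form of Shoenfield absoluteness for $\Sigma^1_2(t)$ statements with $t\in L$; your final descent uses stability with the parameter $T\in L_{\delta^1_2}$, which is legitimate since $\prec_{\Sigma_1}$ allows parameters from the submodel. Your route is somewhat leaner---it never needs \autoref{Lemma: Characterizing stable ordinals}, only the observation that theories are reals---whereas the paper's parameter-elimination detour is the move one would need if the parameter were an arbitrary element of $L_{\delta^1_2}$ rather than literally a subset of $\omega$. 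One small bookkeeping slip on your side: the clause ``the Mostowski collapse satisfies every axiom coded in $t$'' is not arithmetic in the code, since the sentences of $T$ have unbounded quantifier complexity; the uniform satisfaction relation for a coded countable structure is only $\Delta^1_1$. This is harmless, as the full statement remains $\Sigma^1_2(t)$ and the Shoenfield step goes through unchanged.
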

\begin{proof}
    The statement `$T$ has a transitive model' is $\Sigma_1(T)$ and $T\in H_{\omega_1}$. Hence $H_{\omega_1}$ thinks there is a transitive model for $T$ since $H_{\omega_1}\prec_{\Sigma_1} V$.
    Working over $H_{\omega_1}$, the statement `$T$ has a transitive model' is equivalent to a $\Sigma^1_2$-statement without parameters by the following argument: Since $T\in L_{\delta^1_2}$, there is a $\Sigma_1$-formula $\theta(x)$ defining $T$ over $L$. Then the statement
    \begin{equation} \label{Formula: T has a transitive model}
        \exists X [(L\vDash \theta(X))\land \exists M (\text{$M$ is a transitive model of $X$})]
    \end{equation}
    is a valid $\Sigma_1$-sentence over $H_{\omega_1}$, which is equivalent to a $\Sigma^1_2$-statement. By the Shoenfield absoluteness, \eqref{Formula: T has a transitive model} also holds over $L$.
    Since $L_{\delta^1_2}\prec_{\Sigma_1} L$, \eqref{Formula: T has a transitive model} also holds over $L_{\delta^1_2}$, so $T$ has a transitive model in $L_{\delta^1_2}$.
\end{proof}

\begin{proposition}
    Let $T\supseteq T_0$ be a $\Sigma^1_2$-singleton theory. Then $|T|_{\beta}^{T_0}<\delta^1_2$.
\end{proposition}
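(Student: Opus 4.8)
The plan is to bound $\betarank_{T_0}(T)$ by exhibiting a transitive model of $T$ inside $L_{\delta^1_2}$ and then appealing to \autoref{Lemma: beta rank of a theory is bounded by a beta rank of a model}, which tells us that $\betarank_{T_0}(T)$ is at most the $\betarank_{T_0}$ of any transitive model of $T$; since every transitive model in $L_{\delta^1_2}$ has all of its transitive submodels lying in $L_{\delta^1_2}$ as well, such a model witnesses a rank strictly below $\delta^1_2$.

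The key issue is that \autoref{Lemma: beta satisfiable theory has a model in L delta 1 2} as stated requires $T\in L_{\delta^1_2}$, whereas here $T$ is only assumed to be a $\Sigma^1_2$-singleton. So the first step is to show that a $\Sigma^1_2$-singleton theory $T$ in fact lies in $L_{\delta^1_2}$. I would argue as follows: if $T$ is the unique real satisfying a $\Sigma^1_2$-formula $\psi(X)$, then the statement $\exists X\,\psi(X)$ is a true $\Sigma^1_2$-sentence, hence by Shoenfield absoluteness it holds in $L$, and moreover the unique witness is itself $\Sigma^1_2$ (indeed $\Delta^1_2$, being both the unique $X$ with $\psi(X)$ and the unique $X$ with $\psi(X)$ forced upward into $L$). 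Being a $\Delta^1_2$-singleton, $T$ is $\Sigma_1$-definable over $L$ without parameters, so by \autoref{Lemma: Characterizing stable ordinals} we have $T\in L_{\delta^1_2}$. Once $T\in L_{\delta^1_2}$ is established, \autoref{Lemma: beta satisfiable theory has a model in L delta 1 2} immediately produces a transitive model $M\vDash T$ with $M\in L_{\delta^1_2}$.

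Next I would verify that membership in $L_{\delta^1_2}$ gives the desired rank bound. Since $M\in L_{\delta^1_2}$ and $L_{\delta^1_2}$ is transitive, every $N\in M$ also lies in $L_{\delta^1_2}$; by transfinite induction on the well-founded $\in$-relation among transitive models, every transitive model $\in$-below $M$ has $\betarank_{T_0}$ some ordinal in $L_{\delta^1_2}$, hence an ordinal below $\delta^1_2$ (as the ordinals of $L_{\delta^1_2}$ are exactly $\delta^1_2$). Therefore $\betarank_{T_0}(M) < \delta^1_2$, and \autoref{Lemma: beta rank of a theory is bounded by a beta rank of a model} yields $\betarank_{T_0}(T)\le\betarank_{T_0}(M)<\delta^1_2$.

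The main obstacle I anticipate is the passage from ``$\Sigma^1_2$-singleton'' to ``$T\in L_{\delta^1_2}$''; one must be careful that the uniqueness of the witness is preserved under Shoenfield absoluteness so that $T$ is genuinely $\Delta^1_2$ and thus $\Sigma_1$-definable over $L$, which is exactly the hypothesis needed to invoke \autoref{Lemma: Characterizing stable ordinals}. The remaining steps are routine given the lemmas already proved, and the finitely- and recursively-axiomatizable cases are subsumed since such $T$ are in particular $\Sigma^1_2$-singletons (indeed $\Delta^0_1$ or $\Delta^0_2$ definable, hence constructible and lying well inside $L_{\delta^1_2}$).
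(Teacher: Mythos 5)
Your first two steps coincide with the paper's own proof: a $\Sigma^1_2$-singleton $T$ lies in $L$ and is $\Sigma_1$-definable over $L$ (since $\Sigma^1_2$ truth is $\Sigma_1$ over $L$), so $T\in L_{\delta^1_2}$ by \autoref{Lemma: Characterizing stable ordinals}, and then \autoref{Lemma: beta satisfiable theory has a model in L delta 1 2} yields a transitive model $M\vDash T$ with $M\in L_{\delta^1_2}$. (Your detour through ``$T$ is $\Delta^1_2$'' is unnecessary---$\Sigma^1_2$-definability of the singleton already makes ``$x=T$'' a $\Sigma_1$ predicate over $L$---but it is harmless.)

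The gap is in your last step. You argue by $\in$-induction that every transitive model $N$ below $M$ has $\betarank_{T_0}(N)<\delta^1_2$ and then conclude ``therefore $\betarank_{T_0}(M)<\delta^1_2$.'' That inference is invalid: $\betarank_{T_0}(M)$ is the supremum of $\betarank_{T_0}(N)+1$ over the transitive models $N\in M$ of $T_0$, and $\delta^1_2$ is a \emph{countable} ordinal, hence of cofinality $\omega$, so a supremum of ordinals each below $\delta^1_2$ can perfectly well equal $\delta^1_2$. For the same reason, the induction hypothesis ``$\betarank_{T_0}(N)<\delta^1_2$'' is too weak to carry the induction through its own successor step. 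What is needed is a uniform, local bound, and this is exactly what the paper proves: $\betarank_{T_0}(N)\le N\cap\Ord$ for every transitive model $N$, by $\in$-induction. That induction does close, because any transitive model $N'\in N$ of $T_0$ satisfies $N'\cap\Ord<N\cap\Ord$ (the set $N'\cap\Ord$ is an ordinal \emph{of} $N$, as $N$ is closed under $\Delta_0$-separation), whence $\betarank_{T_0}(N)=\sup\{\betarank_{T_0}(N')+1\}\le\sup\{(N'\cap\Ord)+1\}\le N\cap\Ord$. With this bound in hand, $M\in L_{\delta^1_2}$ gives $\betarank_{T_0}(M)\le M\cap\Ord<\delta^1_2$, and \autoref{Lemma: beta rank of a theory is bounded by a beta rank of a model} finishes the proof as you intended. (Alternatively, one could appeal to admissibility of $\delta^1_2$ and $\Sigma$-replacement over $L_{\delta^1_2}$ to bound the supremum, but the $N\cap\Ord$ bound is the cheap fix and is the one the paper uses.)
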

\begin{proof}
    If $T$ is a $\Sigma^1_2$-singleton, then it is also $\Sigma^1_2$-singleton over $L$, and the statement `$T$ exists' is a $\Sigma^1_2$-statement, which is a $\Sigma_1$-statement over $L$. Hence $T\in L_{\delta^1_2}$.
    Hence by \autoref{Lemma: beta satisfiable theory has a model in L delta 1 2}, $T$ has a transitive model $M$ in $L_{\delta^1_2}$, and $|T|_{\beta}^{T_0}\le \betarank_{T_0}(M)$ by \autoref{Lemma: beta rank of a theory is bounded by a beta rank of a model}.
    But we can show $\betarank_{T_0}(M)\le M\cap \mathsf{Ord}$ by induction on $\betarank_{T_0}(M)$, so $\betarank_{T_0}(M)<\delta^1_2$.
\end{proof}

To show the other direction, we construct a theory $T_\alpha$ whose $\beta$-rank is at least $\alpha$ for $\alpha<\delta^1_2$.
\begin{definition}
    For $\alpha<\delta^1_2$, let us fix its $\Sigma_1$-definition $\phi(x)$ over $L$. The theory $T_\phi$ is the combination of the following axioms:
    \begin{enumerate}
        \item Axioms of $T_0$.
        \item The statement $L\vDash \exists! x \phi(x)$.
        \item The statement saying `There is a $(\zeta+1)$-increasing chain of transitive models for $T_0$ for each $\zeta<\alpha$ such that the least model thinks $L\vDash \exists x \phi(x)$ holds.' 
        Formally,
        \begin{multline*}
            \exists x \Big[\phi^L(x) \land \forall \zeta\in x \exists f\colon (\zeta+1)\to V\big[\forall \xi\in x  (f(\xi)\vDash T_0 \land \text{$f(\xi)$ is transitive } \\ \land \forall \xi\le \zeta (f\restricts \xi\in f(\xi)) \land f(0) \vDash (\exists y\phi(y))^L\big]\Big].
        \end{multline*}
    \end{enumerate}
\end{definition}
Clearly, $T$ is recursively axiomatizable or finitely axiomatizable if $T_0$ is. Also, we can see that it is $\beta$-satisfiable:

\begin{lemma}
    Suppose that $H_{\omega_1}\vDash T_0$ (which holds when $T_0 = \PRS+\Beta$ or $T_0 = \ATR_0^\set$.)
    For each $\Sigma_1$-formula $\phi$ defining an ordinal over $L$, $T_\phi$ has a transitive model.
\end{lemma}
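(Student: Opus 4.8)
The plan is to construct, directly in $V$, a single transitive model $M\vDash T_\phi$; since a transitive model built in $V$ is already a genuine witness, no absoluteness transfer is needed. Let $\alpha$ denote the unique ordinal with $L\vDash\phi(\alpha)$. The model $M$ will be the Mostowski collapse of a countable elementary submodel $X\prec H_{\omega_1}$ chosen so that, after collapsing, $M$ contains $\alpha$, agrees with $V$ that $\alpha$ is the $\phi^L$-witness, and contains an $\in$-increasing chain of transitive models of $T_0$ of length $\alpha$ witnessing the schematic axiom (3).

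First I would build the chain. By recursion on $\xi<\alpha$ I define a sequence $\vec N=\langle N_\xi\mid \xi<\alpha\rangle$ of countable transitive models of $T_0$ with the property that $\vec N\restricts\xi\in N_\xi$ for every $\xi<\alpha$. At stage $\xi$, having defined the hereditarily countable object $\vec N\restricts\xi$, I pick a countable $X_\xi\prec H_{\omega_1}$ with $\operatorname{tc}(\vec N\restricts\xi)\subseteq X_\xi$ and $\vec N\restricts\xi\in X_\xi$, and let $N_\xi$ be the transitive collapse of $X_\xi$. Since $\operatorname{tc}(\vec N\restricts\xi)$ is transitive and contained in $X_\xi$, the collapse fixes it pointwise and fixes $\vec N\restricts\xi$, so $\vec N\restricts\xi\in N_\xi$ as required; and $N_\xi\vDash T_0$ because $H_{\omega_1}\vDash T_0$. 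The bottom model automatically satisfies $N_0\vDash(\exists y\,\phi(y))^L$, since $H_{\omega_1}\vDash(L\vDash\exists y\,\phi(y))$ is a parameter-free consequence of the $\Sigma_1$ fact $V\vDash(L\vDash\exists y\,\phi(y))$ reflected into $H_{\omega_1}$, hence holds in $X_0$ and is preserved by collapse.

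Next I would assemble $M$. Choose a countable $X\prec H_{\omega_1}$ with $\alpha\subseteq X$, $\vec N\in X$, and $\operatorname{tc}(\{\alpha,\vec N\})\subseteq X$, and let $M$ be its transitive collapse; then $\alpha$ and $\vec N$ are fixed by the collapse, so $\alpha,\vec N\in M$ and $M\vDash T_0$. Axiom (3) holds in $M$: for each $\zeta<\alpha$ the restriction $\vec N\restricts(\zeta+1)\in M$ is the required $f$, each $N_\xi$ is a transitive model of $T_0$, $\vec N\restricts\xi\in N_\xi$, and $N_0\vDash(\exists y\,\phi(y))^L$, all of which are absolute for the transitive set $M$. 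For axiom (2) I must check that $M$ computes $\alpha$ as the $\phi^L$-witness. Since $M\cong X\prec H_{\omega_1}$ and the collapse fixes $\alpha$, it suffices that $L_{\omega_1^L}\vDash\exists! x\,\phi(x)$ with witness $\alpha$. This follows from $L_{\delta^1_2}\prec_{\Sigma_1}L$: the formula $\phi(\alpha)$ is $\Sigma_1$, so it holds in $L_{\delta^1_2}$ and hence upward in every $L_\lambda$ with $\lambda\ge\delta^1_2$; uniqueness $\forall y(\phi(y)\to y=\alpha)$ is $\Pi_1$ and hence downward absolute from $L$; since $\delta^1_2<\omega_1^L$ we conclude $L_{\omega_1^L}\vDash\exists! x\,\phi(x)$ with witness $\alpha$. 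Thus $M\vDash T_\phi$.

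The main obstacle is the bookkeeping that makes the chain survive collapsing: the condition $\vec N\restricts\xi\in N_\xi$ (rather than merely $N_\eta\in N_\xi$) forces me to keep the \emph{entire} initial segment, together with its transitive closure, inside each elementary submodel before collapsing, and likewise inside the final $X$. The second delicate point---ensuring that $M$'s internally computed $\phi^L$-witness is the genuine $\alpha$ rather than some other ordinal---is exactly what the $\Sigma_1$-upward/$\Pi_1$-downward argument through the least stable ordinal $\delta^1_2$ is designed to secure. Note that because the entire construction takes place inside $V$ using $H_{\omega_1}\vDash T_0$, the argument makes no use of $V=L$ and applies to any recursive $T_0$ with $H_{\omega_1}\vDash T_0$.
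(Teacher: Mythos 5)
Your proof is correct and is essentially the paper's argument: the paper proves the same ``stronger claim,'' recursively building in $V$ an $\in$-increasing chain of countable transitive models elementary in $H_{\omega_1}$, each containing the preceding segment of the chain, and observes that $H_{\omega_1}\vDash T_0$ handles the first two axiom groups (your explicit $\Sigma_1$-upward/$\Pi_1$-downward absoluteness check through $\delta^1_2$ just fills in what the paper leaves implicit there). The only difference is packaging: the paper takes $H_{\omega_1}$ itself as the transitive model of $T_\phi$, since every countable initial segment of the chain already lies in $H_{\omega_1}$, whereas you collapse a final countable hull to get a countable witness --- harmless, and indeed your collapses are identity maps, as every countable elementary submodel of $H_{\omega_1}$ is automatically transitive.
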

\begin{proof}
    We claim that $H_{\omega_1}$ is a model of $T_\phi$. By the assumption on $T_0$ and $\phi(x)$, we have that $H_{\omega_1}$ satisfies the first two axiom schemes of $T_\phi$. To finish the proof, it suffices to see that $H_{\omega_1}$ thinks there is an $\xi$-increasing chain of transitive models $\langle M_\eta\mid \eta<\xi\rangle$ of $T_0$ for each $\xi<\alpha$ such that $M_0\vDash \exists x \phi(x)$, where $\alpha$ is an ordinal defined by $\phi$.

    We prove the following stronger claim: There is an $\in$-increasing chain of transitive models $\langle M_\alpha \mid \alpha<\omega_1\rangle$ such that $M_\alpha \prec H_{\omega_1}$ and each countable section of the chain is in $H_{\omega_1}$.
    For each $\alpha$, let us choose a countable transitive $M_\alpha \prec H_{\omega_1}$ such that $\langle M_\beta\mid \beta<\alpha\rangle \in M_\alpha$. Then we have the desired chain of models.
\end{proof}

The reader might think $T_\phi$ does not depend on the choice of $\phi$ as long as $\phi$ defines a fixed ordinal $\alpha$. However, $T_\phi$ is sensitive to not only $\alpha$ but also the choice of $\phi$: That is, for every $\alpha<\delta^1_2$, we can find $\Sigma_1$-formulas $\phi$ and $\psi$ defining the same $\alpha$ over $L$ such that $T_\phi$ and $T_\psi$ are not $\beta$-equivalent. An informal reason for that is we can force $\phi$ to have arbitrarily large $\xi$, where $\xi$ is the least ordinal satisfying $L_\xi\vDash \exists x\phi(x)$. The construction of an example is a variant of the padding argument:

\begin{example}
    For a given $\alpha<\delta^1_2$, let us fix a $\Sigma_1$-formula $\phi(x)$ defining $\alpha$ over $L$.
    For a sufficiently large $\gamma < \delta^1_2$ whose size will be determined later, let us also fix a $\Sigma_1$-formula $\theta(x)$ defining $\gamma$ over $L$.
    Then take
    \begin{equation*}
        \psi(x) \equiv \exists y(\theta(y)\land \phi(x)).
    \end{equation*}
    We claim that $T_\phi$ and $T_\psi$ can fail to be $\beta$-equivalent: Suppose that $M$ is a transitive model of $T_\phi$. 
    By \autoref{Lemma: beta satisfiable theory has a model in L delta 1 2}, we may assume that $M\in L_{\delta^1_2}$, so $\beta := M\cap \Ord< \delta^1_2$.
    Now let us pick $\gamma>\beta$, and choose $\theta$ as before. Then consider the theory $T_\psi$.
    If $N\vDash T_\psi$, then $N\vDash \exists x \psi(x)$, so $N\vDash \exists y \theta(y)$. It means $\gamma\in N$, but $\gamma\notin M$. This shows $M\nvDash T_\phi$, so $T_\beta$ and $T_\psi$ are not $\beta$-equivalent.
\end{example}

To avoid the pathology illustrated in the previous example, we should restrict $\phi$ to be of the least \emph{height}:
\begin{definition}
    Let $\phi(x)$ be a $\Sigma_1$-formula defining an element over $L$. The \emph{height} of $\phi(x)$ is the least $\alpha$ such that $L_\alpha\vDash \exists x \phi(x)$.
    We denote the height of $\phi(x)$ by $\height(\phi)$.
\end{definition}
Note that if $\phi(x)$ defines $\alpha<\delta^1_2$, then $\height(\phi) \ge \alpha+1$. Also, under the notation in \autoref{Lemma: Characterizing stable ordinals}, if $\phi$ defines $\alpha$ over $L$ and $\alpha < \sigma(\gamma)$, then $\height(\phi) \le \gamma$.
The next lemma says restricting formulas of the least height yields $\beta$-equivalent theories:
\begin{lemma}
    Let $\phi$ and $\psi$ be $\Sigma_1$-formulas defining the same $\alpha<\delta^1_2$ of the least height. Then $T_\phi\equiv_\beta T_\psi$.
\end{lemma}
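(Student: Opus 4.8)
The plan is to reduce $\beta$-equivalence to agreement on transitive models and then show that, once we restrict attention to transitive models of $T_0$, each axiom of $T_\phi$ depends on the formula $\phi$ only through two pieces of data: the ordinal $\alpha$ that $\phi$ defines over $L$, and the value $\height(\phi)$. Since $\phi$ and $\psi$ define the same $\alpha$ and both have the least height, we have $\height(\phi) = \height(\psi)$, so these two pieces of data coincide and the two theories are satisfied by exactly the same transitive models. Concretely, by the lemma characterizing $\beta$-equivalence for extensions of $\PRS+\Beta$ via transitive models, it suffices to prove that for every transitive $M \vDash T_0$ we have $M \vDash T_\phi$ iff $M \vDash T_\psi$.

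Fix such an $M$ and write $\beta = M \cap \Ord$; since $M$ is transitive and $L$ is absolute, the inner model ``$L$'' of $M$ is $L_\beta$. First I would handle axiom (2). Because $\phi$ is $\Sigma_1$ and $\Sigma_1$ formulas are upward absolute, any witness to $\phi$ inside $L_\beta$ is a genuine witness in $L$, hence equal to $\alpha$; thus $L_\beta \vDash \exists! x\, \phi(x)$ holds precisely when a witness has appeared by stage $\beta$, i.e.\ when $\beta \ge \height(\phi)$, and in that case the object $L_\beta$ assigns to ``$x$'' is the true $\alpha$ (note $\beta \ge \height(\phi) \ge \alpha+1 > \alpha$). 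The identical analysis applies to $\psi$, and since $\height(\phi) = \height(\psi)$, axiom (2) of $T_\phi$ holds in $M$ iff axiom (2) of $T_\psi$ does, namely iff $\beta \ge \height(\phi)$.

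Next I would treat axiom (3). Its outermost quantifier ranges over the ordinal $x$ defined by $\phi^L$, which by the previous paragraph is the true $\alpha$ whenever $\beta \ge \height(\phi)$; so the clause ``for each $\zeta < x$'' is literally the clause ``for each $\zeta < \alpha$'' for both $\phi$ and $\psi$. The only other occurrence of $\phi$ is in the requirement that the bottom model satisfy $(\exists y\, \phi(y))^L$: for a transitive $f(0) \vDash T_0$ with $f(0) \cap \Ord = \beta_0$, this says $L_{\beta_0} \vDash \exists y\, \phi(y)$, which, again by $\Sigma_1$ absoluteness, is equivalent to $\beta_0 \ge \height(\phi)$. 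Since $\height(\phi) = \height(\psi)$, this requirement is the same constraint on the bottom model for both theories. Hence the entire content of axiom (3)---the existence, for each $\zeta < \alpha$, of a $(\zeta+1)$-increasing chain of transitive models of $T_0$ whose least member has ordinal height at least $\height(\phi)$---is identical for $T_\phi$ and $T_\psi$. Combining the three axiom groups (axiom (1) is $T_0$ verbatim) gives $M \vDash T_\phi$ iff $M \vDash T_\psi$, and therefore $T_\phi \equiv_\beta T_\psi$.

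The step I expect to require the most care is the verification that a transitive model of the weak base theory $T_0$ computes the constructible hierarchy $L_\beta$ and the $\Sigma_1$-defined ordinal $\alpha$ correctly enough for the above absoluteness arguments to go through---in particular that ``$(\cdots)^L$'' inside $M$ really is evaluated in $L_{M\cap\Ord}$ and that the height is read off as the first stage at which a witness appears. The saving grace is that every relevant statement is $\Sigma_1$, so only upward absoluteness and transitivity are needed, and whatever mild closure properties one invokes are applied symmetrically to $\phi$ and $\psi$; since the defined ordinal and the height are the sole $\phi$-dependent parameters and they agree with those of $\psi$, any such subtlety cancels out.
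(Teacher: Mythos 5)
Your proof is correct and takes essentially the same route as the paper's: both reduce $\beta$-equivalence to agreement on transitive models and use $\height(\phi)=\height(\psi)$ together with upward $\Sigma_1$-absoluteness along the $L$-hierarchy to see that $\phi^L$ and $\psi^L$ define the same ordinal $\alpha$ in any transitive model whose $L$ reaches the common height. Your explicit analysis of the clause $f(0)\vDash(\exists y\,\phi(y))^L$ in axiom (3) simply spells out a step the paper compresses into ``$M$ satisfies the remaining part of $T_\psi$.''
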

\begin{proof}
    Suppose that $M$ is a transitive model of $T_\phi$. We claim that $M\vDash T_\psi$: By the assumption, $L^M=L_\xi\vDash \exists x \phi(x)$ for some $\xi$. Since $\phi$ and $\psi$ be of least height, $\height(\phi)=\height(\psi)\le \xi$, so $L_\xi \vDash \exists x\psi(x)$.
    Note that the statement
    \begin{equation*}
        \forall x,y (L\vDash \phi(x)\land\psi(y))\to x=y
    \end{equation*}
    is a true $\Pi_1$-statement, $M$ sees $\phi^L(\alpha)$ and $\psi^L(\alpha)$ hold. It shows $M$ satisfies the remaining part of $T_\psi$.
\end{proof}

Now for each $\alpha<\delta^1_2$ fix its $\Sigma_1$-definition $\phi_\alpha$ of least height. Note that $\{\phi_\alpha \mid \alpha<\delta^1_2\}$ is not recursive although each $T_{\phi_\alpha}$ is recursive (and finite if $T_0$ is.) 

\begin{lemma} \label{Lemma: height monotonicity}
    If $\alpha\le \gamma$, then $\height(\phi_\alpha) \le \height(\phi_\gamma)$.
\end{lemma}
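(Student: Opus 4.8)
The plan is to reduce the monotonicity of $\height(\phi_\bullet)$ to the two facts recorded just after the definition of $\height$: that $\height(\phi)\ge\alpha+1$ whenever $\phi$ defines $\alpha$, and---the one that does the real work here---that $\height(\phi)\le\delta$ whenever $\phi$ defines $\alpha$ over $L$ and $\alpha<\sigma(\delta)$, where $\sigma$ is as in \autoref{Lemma: Characterizing stable ordinals}. Set $\beta=\height(\phi_\gamma)$. The entire argument hinges on establishing $\gamma<\sigma(\beta)$; granting this, since $\alpha\le\gamma$ we get $\alpha<\sigma(\beta)$, and applying the second recorded fact to the formula $\phi_\alpha$ (which by definition defines $\alpha$ over $L$), with its free ordinal taken to be $\beta$, yields $\height(\phi_\alpha)\le\beta=\height(\phi_\gamma)$, as desired.

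First I would analyze the witness that appears at the height stage. By definition of $\beta=\height(\phi_\gamma)$ we have $L_\beta\vDash\exists x\,\phi_\gamma(x)$; let $c\in L_\beta$ be a witness, so $L_\beta\vDash\phi_\gamma(c)$. Since $\phi_\gamma$ is $\Sigma_1$ and $L_\beta$ is a transitive substructure of $L$, upward $\Sigma_1$-absoluteness gives $L\vDash\phi_\gamma(c)$. But $\phi_\gamma$ defines $\gamma$ uniquely over $L$, so $c=\gamma$. Hence $L_\beta\vDash\phi_\gamma(\gamma)$, and, pushing the uniqueness down to $L_\beta$ by the same absoluteness, $\gamma$ is $\Sigma_1$-definable over $L_\beta$.

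Next I would invoke \autoref{Lemma: Characterizing stable ordinals}, which identifies $L_{\sigma(\beta)}$ with the collection of $\Sigma_1$-definable elements of $L_\beta$. Since $\gamma$ is $\Sigma_1$-definable over $L_\beta$, we obtain $\gamma\in L_{\sigma(\beta)}$, and because $\gamma$ is an ordinal and $L_{\sigma(\beta)}\cap\Ord=\sigma(\beta)$, this is precisely $\gamma<\sigma(\beta)$. This supplies the one missing ingredient, and the argument concludes exactly as outlined in the first paragraph.

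The main obstacle---really the only point that is not bookkeeping---is the identification of the witness at stage $\beta$ as $\gamma$ itself. The subtlety is that over the small initial segment $L_\beta$ the formula $\phi_\gamma$ might a priori be satisfied by some object other than the genuine witness $\gamma$; ruling this out is exactly where the uniqueness of $\phi_\gamma$ over all of $L$, combined with the $\Sigma_1$-upward-absoluteness of $L_\beta\subseteq L$, is essential. Once $\gamma<\sigma(\beta)$ is in hand, the monotonicity is immediate from the recorded height bound, with no further case split on whether $\alpha=\gamma$ or $\alpha<\gamma$.
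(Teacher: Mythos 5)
Your proof follows essentially the same route as the paper: both arguments turn on showing that $\gamma$ is $\Sigma_1$-definable over $L_\beta$, where $\beta=\height(\phi_\gamma)$ (your witness-identification step via upward $\Sigma_1$-absoluteness plus uniqueness over $L$ is exactly what the paper asserts without spelling out), and then passing to $\alpha$ through \autoref{Lemma: Characterizing stable ordinals}.

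There is, however, one under-justified step, and it sits at the heart of the lemma. The ``second recorded fact'' you cite --- if $\phi$ defines $\alpha$ over $L$ and $\alpha<\sigma(\gamma)$, then $\height(\phi)\le\gamma$ --- is false as literally stated: conjoin the formula $x=\varnothing$ with $\exists y\,\theta(y)$, where $\theta$ is a $\Sigma_1$-definition of a very large ordinal; the result still defines $0$ over $L$, yet its height exceeds $\gamma$. The note is true only when $\phi$ has \emph{least} height among $\Sigma_1$-definitions of $\alpha$, i.e., it is really the statement $\alpha<\sigma(\gamma)\Rightarrow\height(\phi_\alpha)\le\gamma$. You do apply it to $\phi_\alpha$, so the instance you use is a true one, but your stated justification (``$\phi_\alpha$ defines $\alpha$ over $L$'') never invokes minimality of height --- and minimality is indispensable, since the lemma itself fails for non-minimal definitions of $\alpha$. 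The paper fills in exactly this point: from the $\Sigma_1$-definability of $\alpha$ over $L_\beta$ it builds, via the least-witness trick of \autoref{Lemma: Stable ordinal monotonicity}, a $\Sigma_1$-formula $\phi'$ defining $\alpha$ over $L$ with $\height(\phi')\le\beta$, and then concludes $\height(\phi_\alpha)\le\beta$ precisely because $\phi_\alpha$ was chosen of least height. Unfolding your citation in this way makes your proof complete and, at that point, identical to the paper's; also note that, contrary to your closing remark, this final step (not the witness identification) is where the real content lies.
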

\begin{proof}
    We claim the following to see $\height(\phi_\alpha) \le \height(\phi_\gamma)$:
    \begin{equation*}
        \forall \xi [\xi\ge\height(\phi_\gamma)\to \xi\ge\height(\phi_\alpha)].
    \end{equation*}
    If $\xi\ge \height(\phi_\gamma)$, then $\gamma$ is $\Sigma_1$-definable over $L_\xi$.
    This means $\gamma\in L_{\sigma(\xi)}$, and so $\alpha\in L_{\sigma(\xi)}$.
    Hence $\alpha$ is $\Sigma_1$-definable over $L_\xi$. 
    
    Now suppose contrary that  $\height(\phi_\alpha)>\xi$, and let $\phi(x)$ be a $\Sigma_1$-formula defining $\alpha$ \emph{over $L_\xi$}. Following the argument for the proof of \autoref{Lemma: Stable ordinal monotonicity}, $\phi'(x)$ becomes a formula defining $\alpha$ over $L$.
    Hence $\height(\phi') \le \xi < \height(\phi_\alpha)$, contradicting with that $\phi_\alpha$ is a $\Sigma_1$-formula defining $\alpha$ over $L$ of the least height.
\end{proof}

\begin{proposition}
    If $\alpha<\gamma$, then $T_{\phi_\alpha} <_\beta^{T_0} T_{\phi_\gamma}$.
\end{proposition}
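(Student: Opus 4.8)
The plan is to prove the stronger statement at the level of transitive models and then transfer to $\beta$-models via \autoref{Lemma: Beta rank and transitive rank}. So I would fix an arbitrary transitive $M \vDash T_{\phi_\gamma}$ and manufacture a transitive $N \in M$ with $N \vDash T_{\phi_\alpha}$. The first step is to pin down, inside $M$, the ordinal that $\phi_\gamma$ defines. Since $M$ is transitive we have $L^M = L_\theta$ for $\theta = \Ord \cap M$, and because $\phi_\gamma$ is $\Sigma_1$, upward $\Sigma_1$-absoluteness forces the unique ordinal that $L_\theta$ believes is defined by $\phi_\gamma$ to be the genuine $\gamma$; in particular $\gamma \in M$ and $\gamma < \theta$. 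As $\alpha < \gamma$, I can instantiate axiom (3) of $T_{\phi_\gamma}$ at $\zeta = \alpha$ to obtain, inside $M$, an $\in$-increasing chain $f \colon (\alpha+1) \to V$ of transitive models of $T_0$ satisfying $f\restricts\xi \in f(\xi)$ for all $\xi \le \alpha$ and with bottom model $f(0) \vDash (\exists y\,\phi_\gamma(y))^L$.

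I would then set $N := f(\alpha)$. It is transitive, satisfies $T_0$, and lies in $M$ because $f, \alpha \in M \vDash \PRS$, so function application is available. The heart of the argument is checking $N \vDash T_{\phi_\alpha}$. Axiom (1) is immediate. For axiom (2), observe that $L^{f(0)} = L_{\Ord \cap f(0)}$ (standard for transitive models of $T_0 \supseteq \PRS+\Beta$), so $f(0) \vDash (\exists y\,\phi_\gamma(y))^L$ forces $\Ord \cap f(0) \ge \height(\phi_\gamma)$. Since the chain is $\in$-increasing, $\theta_N := \Ord \cap N \ge \Ord \cap f(0) \ge \height(\phi_\gamma)$, and by \autoref{Lemma: height monotonicity} (using $\alpha \le \gamma$) we get $\theta_N \ge \height(\phi_\alpha)$. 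Hence $L_{\theta_N} = L^N \vDash \exists x\,\phi_\alpha(x)$, and uniqueness follows from upward $\Sigma_1$-absoluteness of $\phi_\alpha$; thus $N \vDash (L \vDash \exists! x\,\phi_\alpha(x))$.

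For axiom (3), I would reuse the sub-chains of $f$. Given $\zeta < \alpha$, the restriction $f \restricts (\zeta+1)$ is an $\in$-increasing chain of transitive $T_0$-models of length $\zeta+1$, and it lies in $N$: when $\zeta+1 < \alpha$ it is a member of $f(\zeta+1) \in N$, and when $\zeta+1 = \alpha$ it equals $f\restricts\alpha \in f(\alpha) = N$; transitivity of $N$ closes the gap. Its bottom model $f(0)$ satisfies $(\exists y\,\phi_\alpha(y))^L$ by the very same height computation, since $\Ord \cap f(0) \ge \height(\phi_\gamma) \ge \height(\phi_\alpha)$. All the relevant clauses—$f(\xi) \vDash T_0$, transitivity of $f(\xi)$, the membership relations $f\restricts\xi \in f(\xi)$, and the satisfaction of $(\exists y\,\phi_\alpha(y))^L$ in $f(0)$—are absolute for the transitive model $N \vDash \PRS+\Beta$, which can evaluate satisfaction in the set-sized models $f(\xi)$ and compute $L^{f(0)}$ correctly. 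Since $N$ also recognizes $\alpha$ as the ordinal defined by $\phi_\alpha$, it certifies these sub-chains as the required witnesses, giving $N \vDash T_{\phi_\alpha}$.

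The step I expect to be the main obstacle is precisely this verification of axiom (3) inside $N$: one must ensure that the top model of the single chain supplied by $M$ literally contains all the shorter chains and internally certifies them, and that the bottom-model clause survives the switch from $\phi_\gamma$ to $\phi_\alpha$. This is exactly where \autoref{Lemma: height monotonicity} is indispensable, as it guarantees that the ordinal height needed to define $\gamma$ already suffices to define $\alpha$, so the one chain does double duty. The remaining points—upward absoluteness correctly identifying $\alpha$ and $\gamma$, and $\Delta_1$-absoluteness of satisfaction in set models—are routine for transitive models of $T_0 \supseteq \PRS+\Beta$. Having produced, for an arbitrary transitive $M \vDash T_{\phi_\gamma}$, a transitive $N \in M$ with $N \vDash T_{\phi_\alpha}$, \autoref{Lemma: Beta rank and transitive rank} yields $T_{\phi_\alpha} <^{T_0}_\beta T_{\phi_\gamma}$, as desired.
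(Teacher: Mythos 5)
Your proposal is correct and takes essentially the same route as the paper's own proof: both instantiate the chain axiom of $T_{\phi_\gamma}$ inside a transitive $M\vDash T_{\phi_\gamma}$ at $\zeta=\alpha$, take the top model $f(\alpha)$ of the resulting chain as the transitive witness for $T_{\phi_\alpha}$, and use \autoref{Lemma: height monotonicity} via the computation $\height(\phi_\alpha)\le\height(\phi_\gamma)\le \Ord\cap f(0)$ to convert the bottom-model clause for $\phi_\gamma$ into the one for $\phi_\alpha$. You merely spell out a few absoluteness and sub-chain membership details that the paper leaves implicit.
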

\begin{proof}
    Suppose that $M$ is a transitive model of $T_{\phi_\gamma}$, then $\gamma\in M$ and $\gamma$ is the unique set defined by $\phi_\gamma^L(x)$ over $M$. Hence $\height(\phi_\gamma)\le M\cap \Ord$, so by \autoref{Lemma: height monotonicity}, $\phi_\alpha^L(x)$ defines $\alpha$ over $M$.
    Now let $\langle M_\xi \mid \xi\le\alpha\rangle \in M$ be a sequence of transitive models of $T_0$ such that
    \begin{enumerate}
        \item For each $\xi\le \alpha$, $\langle M_\eta\mid \eta<\xi\rangle \in M_\xi$.
        \item $M_0\vDash \exists x \phi_\gamma(x)$.
    \end{enumerate}
    We claim that $M_\alpha\vDash T_{\phi_\alpha}$: Clearly $M_\alpha\vDash T_0$.
    Also, we have
    \begin{equation*}
        \height(\phi_\alpha)\le \height(\phi_\gamma)\le M_0\cap \Ord < M_\alpha\cap \Ord,
    \end{equation*}
    so $M_\alpha\vDash (\exists x \phi_\alpha(x))^L$. Lastly, we can see that $\langle M_\eta\mid \eta \le\xi\rangle$ witnesses the last axiom of $T_{\phi_\alpha}$.
\end{proof}

Let us finish the proof of \autoref{Theorem: Main theorem for the beta rank of effective theories}:

\begin{proof}[Proof of \autoref{Theorem: Main theorem for the beta rank of effective theories}]
    We have $\lvert T_{\phi_\alpha}\rvert^{T_0}_\beta \ge \alpha$ for each $\alpha<\delta^1_2$.
\end{proof}

\section{\texorpdfstring{$\beta$}{Beta}-rank for specific theories}

In this section we calculate the $\beta$-ranks of specific systems. The ranks of some familiar theories are easy to calculate. In Simpson's book we find the following:

\begin{proposition}[{\cite[Lemma VII.2.9]{simpson2009subsystems}}]
    $\mathsf{ACA}_0$ proves: For any set $X$, if $\mathcal{O}^X$ exists, then there is a $\beta$-model containing $X$.
\end{proposition}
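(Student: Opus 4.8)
The plan is to reduce $\Sigma^1_1$-correctness to a statement about infinite paths through trees, and then to use $\mathcal{O}^X$ to manufacture the witnessing paths and close off into an $\omega$-model. First I would invoke the Kleene normal form theorem (available in $\mathsf{ACA}_0$): every $\Sigma^1_1$ formula $\varphi(n,Y)$ with set parameter $Y$ is equivalent to ``$T^Y_\varphi[n]$ is ill-founded'' for a tree $T^Y_\varphi \subseteq \omega^{<\omega}$ recursive in $Y$, uniformly in $n$. Thus a countable coded $\omega$-model $M$ containing $X$ is a $\beta$-model precisely when, for every $Y \in M$ and every such tree, $M$ contains an infinite path through the tree whenever one exists in $V$; soundness is automatic, since a path coded in $M$ is a genuine path. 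So the target reduces to building an $\omega$-model closed under ``adding leftmost paths through its own ill-founded recursive trees.''

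The key tool is the relativized Kleene basis theorem: every nonempty $\Sigma^1_1(Y)$ class has a member recursive in the hyperjump $\mathcal{O}^Y$, and concretely the leftmost path through an ill-founded $Y$-recursive tree is computable from $\mathcal{O}^Y$. Since $\mathsf{ACA}_0$ proves monotonicity of the hyperjump ($Y \le_T Z$ implies $\mathcal{O}^Y \le_T \mathcal{O}^Z$, uniformly), I would set $H_0 = X$ and $H_{k+1} = \mathcal{O}^{H_k}$ and define $M = \bigcup_{k<\omega}\{Y : Y \le_T H_k\}$, the reals recursive in some finite iterate of the hyperjump over $X$. This $M$ is visibly a Turing ideal closed under join, hence an $\omega$-model of $\mathsf{ACA}_0$ containing $X$, coded by a single real assembled from the $H_k$, with the whole construction formalizable in $\mathsf{ACA}_0$ from the assumed hyperjumps.

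It remains to verify $\Sigma^1_1$-correctness, and this is where the closure pays off. Given a $\Sigma^1_1$ sentence with parameter $Y \in M$, say $Y \le_T H_k$, that holds in $V$, the basis theorem produces a witness recursive in $\mathcal{O}^Y$, hence recursive in $\mathcal{O}^{H_k} = H_{k+1}$, so the witness again lies in $M$; conversely any witness coded in $M$ is genuine because the matrix is arithmetic and $M$ is an $\omega$-model. Hence $M$ computes $\Sigma^1_1$ truth correctly for all of its parameters and is a $\beta$-model containing $X$.

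I expect the main obstacle to be exactly this closure requirement. A single application of $\mathcal{O}^X$ only reflects $\Sigma^1_1(X)$ statements, but the parameters appearing in the correctness condition range over all of $M$, including reals as complex as $\mathcal{O}^X$ itself, so witnesses for $\Sigma^1_1$ statements with such parameters can escape any fixed level of the hyperjump. Controlling this, by closing under the hyperjump operation rather than applying it once, is the crux of the argument and is precisely the role played by the hypothesis on $\mathcal{O}^X$; the remaining points, that the normal-form trees are handled uniformly, that join-closure keeps us inside the iterates, and that the coding is $\mathsf{ACA}_0$-formalizable, are routine.
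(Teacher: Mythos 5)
Your proposal breaks down at exactly the point you call the crux. The hypothesis grants \emph{one} application of the hyperjump to the single set $X$; your construction requires the whole tower $H_1=\mathcal{O}^X$, $H_2=\mathcal{O}^{\mathcal{O}^X}$, $H_3=\mathcal{O}^{H_2},\dots$, plus a single real coding $\langle H_k\rangle_{k<\omega}$. None of this is available: over $\mathsf{ACA}_0$, ``$\mathcal{O}^X$ exists'' does \emph{not} imply ``$\mathcal{O}^{\mathcal{O}^X}$ exists.'' Indeed, if it did, then the theory $T=\mathsf{ACA}_0+{}$``$\mathcal{O}^X$ exists'' would prove (applying the very proposition at issue to the set $\mathcal{O}^X$ inside $T$) that there is a countable coded $\beta$-model containing $\mathcal{O}^X$; such a model is $\Pi^1_1$-correct and contains $\mathcal{O}^X$, hence recognizes it as the hyperjump of $X$ and is therefore a coded $\beta$-model of $T$ itself. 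So $T$ would prove the existence of a $\beta$-model of $T$, contradicting the semantic analogue of G\"odel's second incompleteness theorem discussed in the introduction (equivalently, the well-foundedness of coded membership between $\beta$-models, \autoref{Lemma: Membership bw beta models is wellfounded}), since $T$ does have $\beta$-models. Put differently, the principle you actually invoke---closure of the universe under the hyperjump operation---is precisely $\Pi^1_1\text{-}\mathsf{CA}_0$, so at best your argument reproves that $\Pi^1_1\text{-}\mathsf{CA}_0$-style closure yields $\beta$-models (Simpson's Theorem VII.2.10), which is strictly weaker than the lemma being proved. There is also a secondary formalization gap: even granting ``$\forall k\,\exists H_k$,'' one cannot assemble the sequence $\langle H_k\rangle_k$ into one code in $\mathsf{ACA}_0$, because ``$W=\mathcal{O}^{H_k}$'' is far from an arithmetic predicate, so no comprehension instance available to you produces the code.

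For comparison: the paper does not prove this proposition but cites Simpson's Lemma VII.2.9, whose entire content is the opposite of your strategy---a \emph{single} hyperjump suffices, and the $\beta$-model it yields lies \emph{below} $\mathcal{O}^X$: its code is arithmetic in $\mathcal{O}^X$, and in particular $\mathcal{O}^X\notin M$ (this refinement is exactly the proposition the paper states next). The circularity you correctly identify (new elements of $M$ generate new $\Sigma^1_1$ obligations) is resolved not by climbing to higher hyperjumps but by keeping $M$ low enough that $\mathcal{O}^X$ alone already settles every well-foundedness question $M$ must answer correctly; this is a basis-theorem-style selection of the whole model at once, not an iteration. The distinction matters downstream: the equivalence over $\mathsf{ACA}_0$ between ``$\mathcal{O}^X$ exists'' and ``there is a countable coded $\beta$-model containing $X$,'' and the paper's rank computations such as $|\mathsf{ATR}_0+\text{``}\mathcal{O}^{(n)}\text{ exists''}|_\beta=n$ and $|\Pi^1_1\text{-}\mathsf{CA}_0|_\beta=\omega$, all rest on the fact that one hyperjump produces a $\beta$-model sitting strictly below that hyperjump, which your construction cannot deliver even in principle.
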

Attention to the proof of this theorem shows the following:
\begin{proposition}
    For any set $X$, if $\mathcal{O}^X$ exists, then there is a $\beta$-model $M$ containing $X$ such that $M$ is arithmetic in $\mathcal{O}^X$.
\end{proposition}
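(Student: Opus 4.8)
The plan is to treat the statement as an effectivization of the cited result of Simpson and to re-run its proof while keeping track of complexity. Recall that $\mathcal{O}^X$ is $\Pi^1_1(X)$-complete, so it uniformly decides well-foundedness of every tree recursive in a finite join of reals already placed in the model; by (the relativization of) the Gandy basis theorem it moreover supplies, for each true $\Sigma^1_1$ instance, a witnessing path $Y$ with $Y\le_T\mathcal{O}^X$ and $\omega_1^Y=\omega_1^X$. These two facts are exactly the ingredients Simpson's construction consumes, and both are available arithmetically (indeed recursively) in $\mathcal{O}^X$, uniformly in the index of the tree. So I would reproduce the construction of the $\beta$-model and simply annotate it with the resulting oracle bounds.

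Concretely, I would code the $\beta$-model $M$ by a master real $W$ listing its reals as sections $W_i$, and build $W$ by an effective recursion relative to $\mathcal{O}^X$. Starting from $X$, at each stage I use $\mathcal{O}^X$ to detect which trees formed from the finitely many reals enumerated so far are ill-founded and, for each such tree, to adjoin a witnessing path of Turing degree below $\mathcal{O}^X$, while also closing under join and Turing reducibility. The point is that each of these operations is a fixed reduction to $\mathcal{O}^X$, so the resulting predicate $\{(i,n):n\in W_i\}$ is recursive in $\mathcal{O}^X$; verifying the bookkeeping conventions (for instance, that ``$W_i$ codes a total set'' is arithmetic in $\mathcal{O}^X$) is routine. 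That the model so produced is genuinely $\Sigma^1_1$-absolute, hence a $\beta$-model containing $X$, is precisely the content of Simpson's argument, which is reproduced unchanged.

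The step I expect to be the main obstacle is the \emph{boundedness} of the witnessing data: one must ensure that no stage of the construction requires information beyond $\mathcal{O}^X$, in particular that we never need the hyperjump of a newly adjoined real to exceed $\mathcal{O}^X$. This is a genuine issue rather than a formality; the Harrison linear order shows that the naive candidate $\{Y:Y\le_h X\}$ already fails to be $\Sigma^1_1$-correct, so the witnessing paths that the construction adjoins can be genuinely non-hyperarithmetic in $X$ and must be extracted with care. The resolution is to maintain throughout the construction the invariant $\omega_1^Y=\omega_1^X$ for every real $Y$ placed into $M$: under this invariant $\mathcal{O}^Y\le_T\mathcal{O}^X$, so all the well-foundedness queries and all the basis selections made at later stages remain computable from $\mathcal{O}^X$. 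Since the Gandy basis theorem is exactly the device that preserves this invariant, the construction closes off within a single reduction to $\mathcal{O}^X$, and the master real $W$ is arithmetic in $\mathcal{O}^X$, as claimed.
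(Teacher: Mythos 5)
Your overall plan --- effectivize Simpson's construction by iterating a basis theorem while keeping every witness below $\mathcal{O}^X$ --- is the approach the paper itself intends (its ``proof'' is literally an instruction to inspect Simpson's argument for Lemma VII.2.9 with oracle bounds in mind). But the pivotal step of your third paragraph is, as stated, a non sequitur: from the invariant $\omega_1^{X\oplus Y}=\omega_1^X$, even together with $Y\le_T\mathcal{O}^X$, you cannot conclude $\mathcal{O}^{X\oplus Y}\le_T\mathcal{O}^X$. What the invariant actually gives is that $\mathcal{O}^{X\oplus Y}$ is (Turing-equivalent to) the $\Sigma_1$-theory of $L_{\omega_1^X}[X\oplus Y]$, and computing that structure requires iterating the $Y$-jump along all $X$-recursive ordinals; this yields only $\mathcal{O}^{X\oplus Y}\le_h \mathcal{O}^X$, a hyperarithmetic bound, not a Turing or arithmetic one. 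Preservation of $\omega_1$ tames the ordinal height of the relativized hierarchy, not the complexity of its theory: by Sacks's theorem almost every real $Y$ satisfies $\omega_1^Y=\omega_1^{\mathrm{CK}}$, whereas only countably many reals $Y$ satisfy $\mathcal{O}^Y\le_T\mathcal{O}$ (since $Y\le_T\mathcal{O}^Y$). So the Gandy basis theorem, cited as a black box, is not ``exactly the device'' your recursion consumes --- its conclusion ($\omega_1$-preservation plus $Y\le_T\mathcal{O}^X$) is strictly weaker than the hyperjump control you need at later stages. Note also that the basis theorem which \emph{is} uniformly recursive in $\mathcal{O}^X$ (Kleene's, via $\Sigma^1_1$ selection) produces witnesses that need not preserve $\omega_1$, so you cannot obtain both uniformity and your invariant by mixing the two standard citations.

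The gap is repairable, but by strengthening the basis step rather than the invariant: each witness must be selected by a construction (the effective Gandy--Harrington-style argument, or Simpson's own formalized proof of the basis lemma) that explicitly decides every $\Sigma^1_1(X\oplus Y)$ fact about the witness as it is built, so that the construction log itself computes $\mathcal{O}^{X\oplus Y}$ from $\mathcal{O}^X$, with the reduction index produced uniformly. With that strengthened conclusion --- every nonempty $\Sigma^1_1(Z)$ class has a member $Y$ with $\mathcal{O}^{Z\oplus Y}\le_T\mathcal{O}^X$, uniformly in indices, whenever $\mathcal{O}^{Z}\le_T\mathcal{O}^X$ via a known index --- your stage-by-stage recursion does close off within a single reduction to $\mathcal{O}^X$ and the proposition follows. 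Uniformity here is not pedantry: a pointwise bound $\mathcal{O}^{Y}\le_T\mathcal{O}^X$ at each stage, without effectively given indices, does not make an $\omega$-length recursion arithmetic in $\mathcal{O}^X$.
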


These propositions easily yield the following results:
\begin{itemize}
\item $\lvert\mathsf{ATR}_0\rvert^{\ATR_0}_\beta = 0$
\item $\lvert\mathsf{ATR}_0+\Pi^1_1\text{-} \mathsf{CA}_0^-\rvert^{\ATR_0}_\beta = \lvert \mathsf{ATR}_0+ \text{``$\mathcal{O}$ exists''} \rvert^{\ATR_0}_\beta = 1$
\item $\lvert \mathsf{ATR}_0+ \text{``$\mathcal{O}^{(n)}$ exists''} \rvert^{\ATR_0}_\beta = n$
\end{itemize}

From these we infer:
\begin{proposition}
$\lvert\Pi^1_1\text{-}\mathsf{CA}_0\rvert^{\ATR_0}_\beta=\omega$
\end{proposition}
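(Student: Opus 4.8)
The plan is to prove the two inequalities $|\Pi^1_1\text{-}\mathsf{CA}_0|_\beta \ge \omega$ and $|\Pi^1_1\text{-}\mathsf{CA}_0|_\beta \le \omega$ separately, both hinging on the same structural fact: for a $\beta$-model $N\models\mathsf{ATR}_0$ one has $\betarank(N)\ge n$ if and only if $\mathcal{O}^{(n)}\in N$. For the lower bound I would show $\mathsf{ATR}_0+``\mathcal{O}^{(n)}$ exists'' $<_\beta \Pi^1_1\text{-}\mathsf{CA}_0$ for every $n$. Given any $\beta$-model $P\models\Pi^1_1\text{-}\mathsf{CA}_0$, closure under the hyperjump yields $\mathcal{O}^{(n+1)}=\mathcal{O}^{\mathcal{O}^{(n)}}\in P$; applying the effectivized form of Simpson's result (that $\mathcal{O}^X$ produces a $\beta$-model containing $X$ which is arithmetic in $\mathcal{O}^X$) inside $P$ produces a $\beta$-model $N\dot\in P$ that is arithmetic in $\mathcal{O}^{(n+1)}$ and contains $\mathcal{O}^{(n)}$, whence $N\models \mathsf{ATR}_0+``\mathcal{O}^{(n)}$ exists''. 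Since the preceding bullet list gives $|\mathsf{ATR}_0+``\mathcal{O}^{(n)}$ exists''$|_\beta=n$, these theories are cofinal below $\Pi^1_1\text{-}\mathsf{CA}_0$ and force $|\Pi^1_1\text{-}\mathsf{CA}_0|_\beta \ge \sup_n (n+1)=\omega$.

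For the upper bound I would invoke \autoref{Lemma: beta rank of a theory is bounded by a beta rank of a model} applied to the minimal $\beta$-model $M$ of $\Pi^1_1\text{-}\mathsf{CA}_0$, reducing the task to showing $\betarank(M)\le\omega$. First I identify $M$ as $\bigcup_n \mathrm{HYP}(\mathcal{O}^{(n)})$: this union is closed under the hyperjump (by monotonicity, $X\le_h \mathcal{O}^{(n)}$ implies $\mathcal{O}^X\le_h\mathcal{O}^{(n+1)}$) and is a $\beta$-model, being an increasing union of the $\beta$-models $\mathrm{HYP}(\mathcal{O}^{(n)})$; and it embeds into every $\beta$-model of $\Pi^1_1\text{-}\mathsf{CA}_0$, since any such model contains $\emptyset$, hence each $\mathcal{O}^{(n)}$, hence all reals $\le_h\mathcal{O}^{(n)}$. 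Consequently every real of $M$ is $\le_h\mathcal{O}^{(n)}$ for some $n$. Any coded $\beta$-model $N\dot\in M$ then has a code $\le_h\mathcal{O}^{(n)}$ for a single $n$, so all reals of $N$ are $\le_h\mathcal{O}^{(n)}$ and in particular $\mathcal{O}^{(n+1)}\notin N$; by the structural fact $\betarank(N)\le n<\omega$. Hence $\betarank(M)=\sup\{\betarank(N)+1\mid N\dot\in M\}\le\omega$, and combining the two bounds gives equality.

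The structural fact is the main obstacle and carries the real content. The direction $\mathcal{O}^{(n)}\in N\Rightarrow\betarank(N)\ge n$ is a straightforward induction using the effectivized Proposition exactly as in the lower bound. The reverse direction $\betarank(N)\ge n\Rightarrow\mathcal{O}^{(n)}\in N$ (the one actually needed above) rests on the lemma that the code of any $\beta$-model $N'$ computes $\mathcal{O}^Y$ for each $Y\in N'$: since $N'$ is $\Sigma^1_1$-correct it decides well-foundedness of $Y$-recursive trees correctly, so this $\Pi^1_1(Y)$-complete index set is arithmetic in the code. Thus whenever $N'\dot\in N$ is a $\beta$-model, its code lies in $N$ and already sees a $\beta$-model below it, so $\mathcal{O}\in N'$ and $\mathcal{O}^{\mathcal{O}}\le_h\mathrm{code}(N')\in N$; iterating along a coded-membership chain of length $n$ deposits $\mathcal{O}^{(n)}$ into $N$. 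I expect the careful verification of this ``code computes the hyperjump'' lemma, together with the routine fact that a $\beta$-model of $\mathsf{ATR}_0$ is closed under $\le_h$, to be the most delicate bookkeeping in the argument.
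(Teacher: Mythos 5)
Your proposal is correct and takes essentially the same approach as the paper: the lower bound comes from the theories $\mathsf{ATR}_0+{}$``$\mathcal{O}^{(n)}$ exists'' of rank $n$, and the upper bound from the minimum $\beta$-model $\{X \mid \exists n\, X\leq_H \mathcal{O}^{(n)}\}$ of $\Pi^1_1\text{-}\mathsf{CA}_0$, so that anything below it is hyperarithmetic in some $\mathcal{O}^{(n)}$. The only cosmetic difference is in the upper bound's bookkeeping: the paper concludes by noting $T<_\beta \mathsf{ATR}_0+{}$``$\mathcal{O}^{(n)}$ exists'' and citing that theory's known rank, whereas you route the same fact (codes of $\beta$-models compute hyperjumps) through a model-rank bound via your ``structural fact.''
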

\begin{proof}

    \emph{Lower bound:} For all $n$, $\Pi^1_1\text{-}\mathsf{CA}_0\vdash\mathsf{ATR}_0+``\mathcal{O}^{(n)}$ exists''. These facts are correctly computed in $\beta$-models. So $$|\Pi^1_1\text{-}\mathsf{CA}_0|_\beta \geq \mathsf{sup}\{|\mathsf{ATR}_0+``\mathcal{O}^{(n)}\text{ exists ''}|_\beta \mid n<\omega \}=\omega.$$ 
    
    \emph{Upper bound:} Suppose $T<_\beta \Pi^1_1\text{-}\mathsf{CA}_0$. The important thing to note is that $\{ X \mid \exists n\; X\leq_H \mathcal{O}^{(n)}\}$ is a $\beta$-model of $\Pi^1_1\text{-}\mathsf{CA}_0$. Hence some $\beta$-model of $T$ is hyp-below $\mathcal{O}^{(n)}$ for some $n$. But then $|T|_\beta \leq |\mathsf{ATR}_0+``\mathcal{O}^{(n)}$ exists''$ |_\beta = n$. So $|\Pi^1_1\text{-}\mathsf{CA}_0|_\beta \leq \omega$.
\end{proof}

To calculate the $\beta$-rank of theories extending $\KPi$, Kripke-Platek set theory plus ``every set is contained in an admissible set,'' we rely on the following general theorem:
\begin{theorem}
    Let $T\supseteq \KPi + \text{Countabiliy}$ be a theory such that $T$ proves ``$L$ satisfies $T$.'' Then
    \begin{equation*}
        \lvert T\rvert^{\PRS+\Beta}_\beta = \min\{M\cap\Ord \mid \text{$M$ is a transitive model of $T$}\}.
    \end{equation*}
    If $T$ is compatible with the axiom of countability, then we have
    \begin{equation*}
        \lvert T\rvert^{\ATR_0^\set}_\beta = \min\{M\cap\Ord \mid \text{$M$ is a transitive model of $T$}\}.
    \end{equation*}
\end{theorem}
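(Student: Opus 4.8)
The plan is to prove the two inequalities separately, writing $\alpha_0 := \min\{M\cap\Ord \mid M\vDash T \text{ is transitive}\}$. The first thing I would record is that the hypothesis that $T$ proves ``$L$ satisfies $T$'' lets us replace arbitrary transitive models by constructible ones: if $M\vDash T$ is transitive then $L^M = L_{M\cap\Ord}$ is again a transitive model of $T$, so $\alpha_0 = \min\{\beta \mid L_\beta\vDash T\}$ and the minimal model may be taken to be $L_{\alpha_0}$ itself. Since $T\supseteq\KPi$, the ordinal $\alpha_0$ is recursively inaccessible (admissible and a limit of admissibles) and $L_{\alpha_0}\vDash V=L$; these two facts drive the whole argument. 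For the non-vacuous cases the displayed equality forces $\alpha_0<\omega_1$, since every theory has countable $\beta$-rank.

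For the upper bound $\betarank_{T_0}(T)\le\alpha_0$ I would invoke \autoref{Lemma: beta rank of a theory is bounded by a beta rank of a model} to obtain $\betarank_{T_0}(T)\le\betarank_{T_0}(L_{\alpha_0})$, and then show $\betarank_{T_0}(N)\le N\cap\Ord$ for every transitive model $N$ of $T_0$ by induction on the $\in$-rank of $N$: any transitive $T_0$-model $N'\in N$ has $N'\cap\Ord < N\cap\Ord$, so by induction $\betarank_{T_0}(N')+1\le N\cap\Ord$, and taking the supremum gives the estimate. Applied to $N=L_{\alpha_0}$ this yields $\betarank_{T_0}(T)\le\alpha_0$; this is exactly the estimate already used for the $\Sigma^1_2$-singleton case.

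The substance is the lower bound $\betarank_{T_0}(T)\ge\alpha_0$. I would first prove the reduction: for $U\supseteq T_0$, one has $U<^{T_0}_\beta T$ if and only if $U$ has a transitive model that is an \emph{element} of $L_{\alpha_0}$. The forward direction is immediate by applying the definition to the minimal model $L_{\alpha_0}\vDash T$; conversely any transitive $M\vDash T$ has $M\cap\Ord\ge\alpha_0$, hence $L_{\alpha_0}\subseteq M$, so a transitive $U$-model lying in $L_{\alpha_0}$ is automatically an element of $M$. It therefore suffices to produce, cofinally in $\alpha_0$, theories below $T$ whose $\beta$-rank approaches $\alpha_0$. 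For this I would relativize the pointwise-definability machinery of \autoref{Lemma: Collection of all L-pointwise definable ordinal is cofinal}, \autoref{Lemma: L-pointwise definable ordinals give different theories}, and \autoref{Proposition: Beta rank of theories extending ATR0 set} inside $L_{\alpha_0}$, which is legitimate because $L_{\alpha_0}\vDash\KPi+V=L$. Calling $\gamma<\alpha_0$ \emph{decent} if it is a pointwise-definable limit of admissibles (and, in the $\ATR_0^\set$ case, also locally countable), each decent $\gamma$ satisfies $L_\gamma\in L_{\alpha_0}$ and $L_\gamma\vDash T_0$, so $\Th(L_\gamma)<^{T_0}_\beta T$; distinct decent ordinals give distinct theories by \autoref{Lemma: No definable truth predicate over Lalpha} and \autoref{Lemma: L-pointwise definable ordinals give different theories}; and for decent $\gamma'<\gamma$ one gets $\Th(L_{\gamma'})<^{T_0}_\beta\Th(L_\gamma)$ exactly as in \autoref{Proposition: Beta rank of theories extending ATR0 set}, since any transitive model of $\Th(L_\gamma)$ satisfies $V=L$ and so is some $L_\epsilon$ with $\epsilon\ge\gamma$ by decency, whence $L_{\gamma'}\in L_\epsilon$. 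An induction on decent $\gamma$ then gives $\betarank_{T_0}(\Th(L_\gamma))=\gamma$: the minimal model of $\Th(L_\gamma)$ is $L_\gamma$, so the rank is $\le\gamma$, while the decent ordinals below $\gamma$ are cofinal in $\gamma$ and realize ranks cofinal in $\gamma$ by the inductive hypothesis. As the decent ordinals are cofinal in $\alpha_0$, taking the supremum yields $\betarank_{T_0}(T)\ge\alpha_0$.

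The hard part will be the cofinality bookkeeping, and this is precisely where the two halves of the theorem diverge. To conclude that the decent ordinals are cofinal in $\alpha_0$ (and in every decent $\gamma$, for the induction) rather than merely in $\omega_1^{L_{\alpha_0}}$, I would rerun the $\mathsf{ZF}^-$-reflection argument of \autoref{Lemma: Collection of all L-pointwise definable ordinal is cofinal} internally, using \autoref{Lemma: Characterizing full stable ordinals} and the stable ordinal $\hat\sigma$. For $T_0=\PRS+\Beta$ the witnessing models need not be locally countable, so one may drop local countability from the definition of decent; the reflection argument then needs no countability assumption and delivers cofinality in all of $\alpha_0$ unconditionally. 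For $T_0=\ATR_0^\set$, however, the witnessing models must satisfy the axiom of countability, so the decent ordinals must be locally countable, and the reflection argument only keeps $\hat\sigma(\gamma)$ locally countable when countability is available in the ambient level; securing this is exactly the role of the hypothesis that $T$ is compatible with the axiom of countability. Verifying that ``decent'' is genuinely reflected downward, so that the ranks $\betarank_{T_0}(\Th(L_\gamma))$ do not stall strictly below $\gamma$, is the one place where the computation must be carried out with care.
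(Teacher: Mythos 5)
Your architecture is the same as the paper's: the upper bound via \autoref{Lemma: beta rank of a theory is bounded by a beta rank of a model} together with the induction $\betarank_{T_0}(N)\le N\cap\Ord$, and the lower bound by observing that every transitive model of $T$ end-extends the minimal level $L_{\alpha_0}$ (using ``$T$ proves $L\vDash T$'') and then running a $<_\beta$-chain of theories $\Th(L_\gamma)$ for pointwise-definable limit-of-admissibles $\gamma<\alpha_0$, exactly as in \autoref{Lemma: Collection of all L-pointwise definable ordinal is cofinal} and \autoref{Proposition: Beta rank of theories extending ATR0 set}. Your observation that local countability should be dropped from ``decent'' in the $\PRS+\Beta$ case is a genuinely good catch (the paper's own set $A$ retains it, which is problematic for theories like $\mathsf{ZFC}+V=L$ whose minimal level is not locally countable), and your explicit reduction of $U<^{T_0}_\beta T$ to ``$U$ has a transitive model in $L_{\alpha_0}$'' is the move the paper makes implicitly.

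However, there is a genuine gap in the bookkeeping, precisely at the point you flagged as delicate. Your induction ``$\betarank_{T_0}(\Th(L_\gamma))=\gamma$'' requires the decent ordinals to be cofinal in \emph{every} decent $\gamma$, and this is false: decent ordinals are limits of admissibles, so the least of them is $\omega_\omega^{\mathsf{CK}}$ (as noted in the proof of \autoref{Lemma: Collection of all L-pointwise definable ordinal is cofinal}), below which there are no decent ordinals whatsoever; more generally a decent $\gamma$ need not be a limit of decent ordinals, so the induction stalls and the exact equality is not available. Nor does mere cofinality of the decent ordinals in $\alpha_0$ suffice for the theorem: a cofinal subset of small ordertype would only certify correspondingly small ranks. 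What is needed---and what the paper's proof supplies---is that the set $A$ of decent ordinals below $\alpha_0$ has \emph{ordertype exactly} $\alpha_0$: since $A$ is definable over $L_{\alpha_0}$ and $\alpha_0$ is admissible (as $T\supseteq\KPi$), if $\operatorname{ot}(A)<\alpha_0$ then the $\Sigma_1$-definable enumeration of $A$ would, by $\Sigma_1$-Replacement in $L_{\alpha_0}$, make $A$ a bounded element of $L_{\alpha_0}$, contradicting its unboundedness. With this in hand, induction along $A$ yields the weaker but sufficient estimate $\betarank_{T_0}(\Th(L_\gamma))\ge\operatorname{ot}(A\cap\gamma)$, and $\sup_{\gamma\in A}\bigl(\operatorname{ot}(A\cap\gamma)+1\bigr)=\operatorname{ot}(A)=\alpha_0$ gives the lower bound. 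So your proposal is repairable, but the Replacement/ordertype step is a missing idea rather than mere care, and the statement it would replace ($\betarank_{T_0}(\Th(L_\gamma))=\gamma$ for all decent $\gamma$) is false as written.
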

\begin{proof}
    $\le$ follows from \autoref{Lemma: beta rank of a theory is bounded by a beta rank of a model} and $\betarank_{T_0}(M)\le M\cap \Ord$ for a transitive model $M$ of $T_0$.
    For $\ge$, let $M$ be a transitive model of $T$. By the assumption, $L^M$ is also a transitive model of $T$.
    Let $\alpha$ be the least ordinal such that $L_\alpha \vDash T$. By \autoref{Lemma: Collection of all L-pointwise definable ordinal is cofinal}, the collection 
    \begin{multline*}
        A = \{\xi<\alpha \mid \text{$\xi$ is locally countable limit admissible} \\ \text{and every element of $L_\xi$ is definable in $L_\xi$}\}
    \end{multline*}
    has ordertype $\alpha$: Suppose not, observe that $A$ is a $\Delta_0$-definable class in $L_\alpha$, so a class function enumerating $A$ is $\Sigma_1$-definable in $L_\alpha$. Hence by $\Sigma_1$-Replacement over $L_\alpha$, $A$ becomes a member of $L_\alpha$. It is impossible since it means $A$ is bounded below $\alpha$. However, $T$ proves Countability, so $L_\alpha$ thinks $A$ is unbounded by \autoref{Lemma: Collection of all L-pointwise definable ordinal is cofinal}.

    Since $\xi<\alpha<\Ord\cap M$ for each $\xi\in A$, $M$ contains a transitive model of $\Th(L_\xi)$, so $\Th(L_\xi) <_\beta^{\PRS+\Beta} T$.
    Then by the proof of \autoref{Proposition: Beta rank of theories extending ATR0 set}, we have $\lvert\Th(L_\xi)\rvert^{\PRS+\Beta}_\beta\ge \xi$ for each $\xi\in A$. Hence the desired result follows.
\end{proof}

\begin{corollary}
    \begin{enumerate}
        \item $\lvert \KPi\rvert^{\PRS+\Beta}_\beta = $ The least recursively inaccessible ordinal.
        \item $\lvert\Pi^1_2\text{-}\mathsf{CA}_0\rvert^{\PRS+\Beta}_\beta = $ The least non-projectible ordinal.
        \item $\lvert\mathsf{ZF}^-\rvert^{\PRS+\Beta}_\beta = $ The least gap ordinal.
    \end{enumerate}
\end{corollary}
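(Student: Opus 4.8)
The plan is to deduce all three values from the immediately preceding theorem, which already expresses $\betarank_{\PRS+\Beta}(T)$ as the least ordinal height of a transitive model of $T$; the remaining work is then to compute, for each of the three theories, the least constructible level modeling it and to match it with the named ordinal. First I would check that each theory meets the hypotheses of that theorem, namely that it extends $\KPi$ and proves ``$L$ satisfies $T$.'' For $\mathsf{ZF}^-$ this is routine: full Collection together with L\'evy reflection yields that every set lies in an admissible set, so $\mathsf{ZF}^-\vdash\KPi$, and $\mathsf{ZF}^-$ proves the relativization of each of its axioms to $L$. For $\Pi^1_2\text{-}\mathsf{CA}_0$ I would read $\betarank_{\PRS+\Beta}$ through the set-theoretic counterpart $(\Pi^1_2\text{-}\mathsf{CA}_0)^\set\supseteq\ATR_0^\set\supseteq\PRS+\Beta$, which proves $\KPi$ since $\Pi^1_2\text{-}\mathsf{CA}_0$ sits proof-theoretically above $\KPi$, and which proves ``$L\vDash(\Pi^1_2\text{-}\mathsf{CA}_0)^\set$'' by the absoluteness to $L$ of the relevant comprehension instances. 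For $\KPi$ the hypotheses hold trivially.

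With the hypotheses in place, the theorem gives
\begin{equation*}
    \betarank_{\PRS+\Beta}(T)=\min\{M\cap\Ord \mid M\text{ a transitive model of }T\}.
\end{equation*}
Since $T$ proves ``$L\vDash T$,'' any transitive $M\vDash T$ contains $L^M=L_{M\cap\Ord}$, which again models $T$ and has the same ordinal height; hence the minimizing model may be taken of the form $L_\alpha$, and the quantity reduces to $\min\{\alpha\mid L_\alpha\vDash T\}$. This is the clean core of the argument, after which everything is a matter of locating the least constructible level satisfying each theory.

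The three identifications would then be carried out by invoking standard results from admissibility theory and the fine structure of $L$. For (1), $L_\alpha\vDash\KPi$ holds exactly when $\alpha$ is admissible and a limit of admissibles, i.e.\ recursively inaccessible, so the least such $\alpha$ is the least recursively inaccessible ordinal; this is essentially unwinding the definition of $\KPi$. For (2), I would appeal to the fine-structural characterization by which satisfaction of $(\Pi^1_2\text{-}\mathsf{CA}_0)^\set$ at $L_\alpha$ fails precisely when $\alpha$ is projectible below itself (admits a $\Sigma_1(L_\alpha)$-definable injection into a smaller ordinal), so that the least level modeling the theory is the least non-projectible ordinal. For (3), I would cite the Marek--Srebrny analysis of gaps in the constructible universe, giving that the least $\alpha$ with $L_\alpha\vDash\mathsf{ZF}^-$ is the least ordinal beginning a gap, i.e.\ the least gap ordinal.

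The main obstacle is not the reduction, which is a direct application of the preceding theorem, but the two fine-structural matchings in (2) and (3): establishing that satisfaction of $(\Pi^1_2\text{-}\mathsf{CA}_0)^\set$ at $L_\alpha$ is equivalent to non-projectibility of $\alpha$, and that satisfaction of $\mathsf{ZF}^-$ at $L_\alpha$ is equivalent to $\alpha$ beginning a gap, each demands care in matching the exact syntactic strength of the theory against the corresponding closure property of $L_\alpha$. Since these are known theorems, I would assemble them with precise citations rather than reprove them, after first discharging the ``$L\vDash T$'' hypotheses that make the reduction to constructible levels legitimate.
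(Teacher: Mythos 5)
Your overall strategy---apply the preceding theorem and then cite the classical identifications of the least constructible levels---is clearly the intended (omitted) proof, and your treatments of items (1) and (3) go through: $\KPi$ and $\mathsf{ZF}^-$ both extend $\KPi$ and provably relativize to $L$, and the matchings with the least recursively inaccessible and (via Marek--Srebrny, plus the observation that the least $\alpha$ with $L_\alpha\vDash\mathsf{ZF}^-$ must satisfy ``every set is countable,'' since otherwise $(H_{\omega_1})^{L_\alpha}$ would be a smaller such level) the least gap ordinal are standard.

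The gap is in item (2). By the paper's definition, $(\Pi^1_2\text{-}\mathsf{CA}_0)^\set = \ATR_0^\set\cup\{\phi^\fraki\mid\phi\in\Pi^1_2\text{-}\mathsf{CA}_0\}$, and $\ATR_0^\set$ contains the axiom of countability. That axiom does \emph{not} relativize to $L$: ``every constructible set is hereditarily countable in $L$'' fails in any transitive model $M$ of $(\Pi^1_2\text{-}\mathsf{CA}_0)^\set$ whose constructible part $L_{M\cap\Ord}$ computes an uncountable ordinal, and such models exist---e.g.\ take a real $x$ coding an ordinal $\kappa$ whose collapse first appears in $L$ above the least $x$-non-projectible ordinal (under $0^\sharp$, a code of $\omega_1^L$ works), and let $M$ be the transitive set corresponding to the minimum $\beta$-model of $\Pi^1_2\text{-}\mathsf{CA}_0$ containing $x$. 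So the hypothesis ``$T$ proves $L\vDash T$'' is false for $T=(\Pi^1_2\text{-}\mathsf{CA}_0)^\set$; your appeal to ``absoluteness of the relevant comprehension instances'' covers only the comprehension axioms, not countability. (Your other justification, that $(\Pi^1_2\text{-}\mathsf{CA}_0)^\set$ proves $\KPi$ because $\Pi^1_2\text{-}\mathsf{CA}_0$ ``sits proof-theoretically above $\KPi$,'' is also invalid as reasoning---proof-theoretic strength does not yield derivability---although the conclusion itself is a standard fact.) The repair is to apply the theorem to a countability-free axiomatization, e.g.\ $T=\KPi+\Sigma_1\text{-separation}$, which does prove ``$L\vDash T$'' and whose least transitive model is $L_\alpha$ for $\alpha$ the least non-projectible ordinal, and then transfer the computation back to $\Pi^1_2\text{-}\mathsf{CA}_0$ by checking that the least non-projectible level is locally countable (hence yields a model of $(\Pi^1_2\text{-}\mathsf{CA}_0)^\set$) and that every transitive model of $(\Pi^1_2\text{-}\mathsf{CA}_0)^\set$ has ordinal height at least the least non-projectible.
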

\begin{proof}
    We only provide the proof of the first item, as the proof for the others is similar.
    Suppose that $\iota$ is the least recursively inaccessible ordinal. Then we have
    \begin{equation*}
        |\KPi|^{\PRS + \mathsf{Beta}}_{\beta} \le |\KPi + \text{Countability} + (V=L)|^{\PRS + \mathsf{Beta}}_{\beta} = \iota.
    \end{equation*}
    For the other direction, suppose that $M$ is a transitive model of $\KPi$. Now, let $N\subseteq L^M$ be a set of all members $a\in L^M = L\cap M$ such that $L^M$ thinks $a$ is hereditarily countable. If $\Ord^N = \Ord^M$, then $N=L^M$, and $N$ becomes a model of $\KPi + (V=L) + \text{Countability}$, If $\Ord^N < \Ord^M$, then $N$ becomes a model of $\mathsf{Z}_2^\set + (V=L) + \text{Countability}$.
    In either case, we have a transitive model $N\subseteq M$ of $\KPi + (V=L) + \text{Countability}$.
    Hence, if $T<_{\beta} \KPi + (V=L) + \text{Countability}$, then $T<_{\beta} \KPi$.
\end{proof}

\bibliographystyle{amsplain}
\bibliography{bibliography}

\end{document}